\documentclass[aap]{imsart}

\RequirePackage{amsthm,amsmath,amsfonts,amssymb}
\RequirePackage[numbers,sort&compress]{natbib}
\RequirePackage[colorlinks,citecolor=blue,urlcolor=blue]{hyperref}

\startlocaldefs

\providecommand{\U}[1]{\protect \rule{.1in}{.1in}}

\theoremstyle{plain}
\newtheorem{theorem}{Theorem}

\newtheorem{corollary}[theorem]{Corollary}

\newtheorem{definition}[theorem]{Definition}

\newtheorem{lemma}{Lemma}

\newtheorem{proposition}{Proposition}
\newtheorem{remark}[theorem]{Remark}

\endlocaldefs

\begin{document}

\begin{frontmatter}

\title{Sharp Asymptotics for the Largest Component in the Subcritical Regime of Preferential Attachment Without Vertex Growth}
\runtitle{Sharp Asymptotics for the Largest Component}

\begin{aug}
\author[A]{\fnms{Yiming}~\snm{Chen}\ead[label=e1]{ymchenmath@math.pku.edu.cn}}
\address[A]{School of Mathematical Sciences, Peking University\printead[presep={,\ }]{e1}}
\end{aug}

\begin{abstract}
We study the size of the largest component in Pittel's preferential attachment process without vertex growth. Starting from the empty graph on a fixed vertex set $[n]$, edges are added one by one with probabilities proportional to $(d_u+\alpha)(d_v+\alpha)$, where $d_u$ and $d_v$ are the current degrees of $u$ and $v$, and $\alpha>0$. Let $L_1$ denote the size of the largest component, and set $m_c:=\frac{\alpha n}{2(\alpha+1)}.$ We prove that if $m=m_c(1-\varepsilon),  \varepsilon=\varepsilon(n)\to0, \varepsilon^3 n\to\infty,$ then
\[
L_1=(1+o_p(1))\frac{2(\alpha+2)}{\alpha+1}\varepsilon^{-2}\log(\varepsilon^3 n)
\]
for every fixed $\alpha>0$. Moreover, the same asymptotic holds whenever $\alpha=\alpha(n)\to a\in(0,\infty]$. In particular, the constant $2(\alpha+2)/(\alpha+1)$ converges to the Erd\H{o}s--R\'enyi value $2$ as $\alpha\to\infty$. If $m=\left\lfloor \frac n2(1-\varepsilon)\right\rfloor$ and $\alpha\varepsilon\to\infty$, then
\[
L_1=(2+o_p(1))\varepsilon^{-2}\log(\varepsilon^3 n).
\]

The subcritical asymptotics for \(L_1\) resolve the problem left open by Janson and Warnke \cite{JansonWarnke2021Preferential}. The upper bound argument relies on the fact that, after conditioning on the degree sequence, the graph can be treated through the corresponding configuration model, the lower bound follows from tree component asymptotics and a second moment argument.

\end{abstract}

\begin{keyword}[class=MSC]
\kwdgroup[type=primary]{\kwd{05C80}\kwd{60C05}}
\end{keyword}

\begin{keyword}
\kwd{preferential attachment}
\kwd{configuration model}
\kwd{largest component}
\kwd{subcritical random graph}
\end{keyword}

\end{frontmatter}


\section{Introduction}

The emergence of a giant component is one of the fundamental phase transitions in random graph theory. In the Erd\H{o}s--R\'enyi process, the transition occurs when the number of edges crosses $n/2$, and the behavior of the largest component is now well understood throughout the phase transition, see, for example, \cite{ErdosRenyi1959Random,ErdosRenyi1960Evolution,Bollobas2001RandomGraphs,JansonLuczakRucinski2000RandomGraphs,Aldous1997Brownian,BollobasRiordan2013PhaseTransition}.  In addition to component size asymptotics, refined structural descriptions of the emerging and supercritical giant component were obtained by Ding, Kim, Lubetzky and Peres \cite{DingKimLubetzkyPeres2011YoungGiant}, and by Ding, Lubetzky and Peres \cite{DingLubetzkyPeres2014StrictlySupercritical}.

A substantial literature has extended the Erd\H{o}s--R\'enyi
phase transition theory to a broad class of sparse random graphs with prescribed or asymptotically prescribed degrees. Molloy and Reed identified the critical parameter for the appearance of a giant component in random graphs with a prescribed asymptotic degree sequence \cite{MolloyReed1995CriticalPoint,MolloyReed1998SizeGiant}. 
A detailed asymptotic theory has since been developed for the configuration model and related sparse random graph models near criticality, 
see, among many others, \cite{JansonLuczak2009NewApproach,KangSeierstad2008CriticalPhase,HatamiMolloy2012ScalingWindow,Joseph2014CriticalDegrees,DharaHofstadLeeuwaardenSen2017CriticalWindow,vanDerHofstadJansonLuczak2019BarelySupercritical,CoulsonPerarnau2023Subcritical}. 
A more structured setting is provided by preferential attachment, where the evolution of the graph depends on the current degree sequence. In classical models, vertex growth is part of the dynamics, with new vertices arriving over time and attaching preferentially to vertices of high degree
%
%
%
\cite{Price1976Cumulative,BarabasiAlbert1999Emergence,BollobasRiordanSpencerTusnady2001DegreeSequence,Barabasi2016NetworkScience,vanDerHofstad2017RGCN1,vanDerHofstad2024RGCN2}.
Many applications, however, involve networks on essentially fixed vertex sets, which motivates preferential attachment models without vertex growth, a class of models studied from the viewpoints of combinatorial probability, graph limits, and statistical physics \cite{Pittel2010Degrees,HruzPeter2010Nongrowing,BorgsChayesLovaszSosVesztergombi2011Limits,BenNaimKrapivsky2012Popularity,Rath2012EdgeConservative,RathSzakacs2012Multigraph,Janson2018EdgeExchangeable}.

%
%
%

In this paper we study Pittel's preferential attachment model $\bigl(G_{n,m}^{\alpha}\bigr)_{m\ge 0}$
on the fixed vertex set $[n]$. The process starts from the empty graph $G_{n,0}^{\alpha}=([n],\varnothing).$ For $m\ge0$, let $G_{n,m}^{\alpha}$ denote the graph produced after \(m\) steps, and let
$d_v=d_v(G_{n,m}^{\alpha})$ denote the degree of a vertex $v\in[n]$. At each step, an edge is added between a pair of distinct nonadjacent vertices. Conditionally on $G_{n,m}^{\alpha}$, the probability that the next edge is $\{v,w\}$
is given by

\[
    \mathbb{P}\!\left(
        G_{n,m+1}^{\alpha}
        =
        G_{n,m}^{\alpha}+\{v,w\}
        \,\middle|\,
        G_{n,m}^{\alpha}
    \right)
    =
    \frac{(d_v+\alpha)(d_w+\alpha)}
    {
    \sum_{\substack{{x,y}\subseteq[n]\ x\ne y,\ {x,y}\notin E(G^\alpha_{n,m})}}
   (d_x+\alpha)(d_y+\alpha)} ,
\]
where the sum in the denominator is over all unordered pairs of distinct vertices
$\{x,y\}$ that are not edges of $G_{n,m}^{\alpha}$.



 Pittel showed that, for any fixed $\alpha>0$, the giant component phase transition occurs at $m_c=\frac{\alpha n}{2(\alpha+1)},$ and obtained asymptotic estimates for the size of the largest component near $m_c$ \cite{Pittel2010Degrees}. Janson and Warnke later proved the sharp supercritical asymptotic, both for fixed $\alpha$ and in the large-$\alpha$ regime.

\begin{theorem}[Janson--Warnke \cite{JansonWarnke2021Preferential}]\label{supercritical case JW}
 Assume that $\alpha=\alpha(n)\to a\in(0,\infty]$. Let $m_c=\frac{\alpha n}{2(\alpha+1)}$, suppose that $\varepsilon=\varepsilon(n)>0$ satisfies $\varepsilon=o(1), \varepsilon^3 n\to\infty .$ Then, for $m=\lfloor m_c(1+\varepsilon)\rfloor,$ we have
\[
L_1(G^\alpha_{n,m})
=
\frac{2}{1+2/a}\,\varepsilon n\,(1+o_p(1)),
\]
with the convention $1/\infty=0$.


In particular, if $\alpha=\alpha(n)\to\infty$ and
$\alpha\varepsilon\to\infty$, then, for $m=\left\lfloor \frac{n}{2}(1+\varepsilon)\right\rfloor,$
we have
\[
    L_1(G^\alpha_{n,m})
    =
    2\varepsilon n\,(1+o_p(1)).
\]
\end{theorem}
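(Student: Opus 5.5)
The natural route is to reduce the preferential attachment graph to a configuration model with a random degree sequence and then apply the barely supercritical theory for configuration models. The first step is Pittel's exact description of $G^\alpha_{n,m}$. Conditionally on its degree sequence $(d_v)_{v\in[n]}$, the graph $G^\alpha_{n,m}$ is distributed as the configuration model with that degree sequence, conditioned to be simple. The degree sequence itself is distributed as i.i.d.\ negative binomial variables $\mathrm{NegBin}(\alpha,p)$ conditioned on $\sum_v d_v=2m$, where $p$ is tuned so that the mean equals $2m/n$. Under this law, a standard local limit (Poissonization) argument gives concentration of the empirical moments:
\[
\frac1n\sum_v d_v=\frac{2m}{n},\qquad \frac1n\sum_v d_v(d_v-1)\approx \mathbb{E}[D(D-1)],\qquad \frac1n\sum_v d_v^3=O_p(1),
\]
where $D\sim\mathrm{NegBin}(\alpha,p)$. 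The errors are $O_p(n^{-1/2})$, which is $o(\varepsilon)$ because $\varepsilon^3 n\to\infty$ implies $\varepsilon\gg n^{-1/3}$. These bounds must hold uniformly when $\alpha\to\infty$, and uniform third-moment and tail control for the negative binomial is enough for this.

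The second step is to compute the criticality parameter. For $D$ negative binomial with mean $\mu$ we have $\mathbb{E}[D(D-1)]=\mu^2(1+1/\alpha)$. Hence
\[
\nu:=\frac{\mathbb{E}[D(D-1)]}{\mathbb{E}D}=\mu\frac{\alpha+1}{\alpha}.
\]
With $\mu=2m/n=\frac{\alpha}{\alpha+1}(1+\varepsilon)$ this gives $\nu=1+\varepsilon$ up to rounding. So the empirical $\nu_n=1+\varepsilon+o_p(\varepsilon)$, and the configuration model is barely supercritical.

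The third step invokes a barely supercritical theorem for configuration models with given degrees, such as van der Hofstad--Janson--Luczak \cite{vanDerHofstadJansonLuczak2019BarelySupercritical} or Janson--Luczak \cite{JansonLuczak2009NewApproach}. For degree sequences with bounded third moments and $\nu_n-1=\varepsilon_n$ with $\varepsilon_n^3n\to\infty$, it gives
\[
L_1=\frac{2\,\mathbb{E}D}{\mathbb{E}[D(D-1)(D-2)]/\mathbb{E}[D]}\,\varepsilon n\,(1+o_p(1)).
\]
Equivalently, the survival probability of the size-biased branching process is $\rho\sim 2\varepsilon/\nu_3$, with $\nu_3=\mathbb{E}[D(D-1)(D-2)]/\mathbb{E}D$, and $L_1\sim n\,\mathbb{E}D\,\rho$. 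For the negative binomial distribution,
\[
\mathbb{E}[D(D-1)(D-2)]=\mu^3(1+1/\alpha)(1+2/\alpha),
\]
so at $\mu\to\frac{a}{a+1}$ we get $\nu_3\to\frac{a}{a+1}\cdot\frac{(a+1)(a+2)}{a^2}=\frac{a+2}{a}=1+2/a$. Therefore
\[
L_1\sim\frac{2\varepsilon n\,\mathbb{E}D}{\mathbb{E}D\,(1+2/a)}=\frac{2}{1+2/a}\,\varepsilon n .
\]
Conditioning on simplicity is harmless here: the probability of simplicity stays bounded away from $0$ in probability, since $\nu_n$ is bounded, so $o_p$ statements transfer from the configuration model to the simple graph.

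For the second statement, take $m=\lfloor\frac n2(1+\varepsilon)\rfloor$. Then $\mu=1+\varepsilon$ and $\nu=(1+\varepsilon)(1+1/\alpha)=1+\varepsilon+1/\alpha+O(\varepsilon/\alpha)$. Since $\alpha\varepsilon\to\infty$, this is $1+\varepsilon(1+o(1))$, and the same computation gives $\nu_3\to1$, hence $L_1\sim 2\varepsilon n$.

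I expect the main obstacle to be the uniformity. The configuration-model theorem must be applied with a random degree sequence whose parameters depend on $n$, including $\alpha\to\infty$, and the empirical moments must be controlled to precision $o(\varepsilon)$ rather than just $o(1)$. I would first try to get this from the conditioned i.i.d.\ representation: a local CLT for $\sum_v d_v$ shows that the conditioning changes the probabilities of the relevant events by at most a bounded factor. Chebyshev's inequality for the unconditioned empirical moments then suffices.
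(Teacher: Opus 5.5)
Your outline follows the Janson--Warnke argument that the paper cites rather than reproves: condition on the degrees, apply a barely supercritical configuration-model theorem, and evaluate the constants for the negative binomial law. However, your first step is false for the simple process $G^\alpha_{n,m}$, and your transfer to $G^\alpha_{n,m}$ rests on it.

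The two exact facts you use are (i) the degrees are i.i.d.\ $\operatorname{NBin}(\alpha,p_n)$ conditioned on their sum, and (ii) given the degrees the graph has law $\mathrm{CM}(d)$. Both hold for the multigraph variant $G^{\alpha,*}_{n,m}$ (Lemmas~\ref{lem:jw-degree-law} and~\ref{lem:jw-cm}). There the normalising constant at each step depends only on the number of edges already present. Hence every edge ordering of a given simple graph has probability determined by its degree sequence. In $G^\alpha_{n,m}$ the numerators are the same, but the normaliser excludes loops and the current edges, so it depends on the current graph and not only on the step. Equivalently, $G^\alpha_{n,m}$ is the multigraph process with loops and repeated edges rejected at every step. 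Given its degrees, it is $\mathrm{CM}(d)$ conditioned on simplicity, reweighted by a non-constant, path-dependent factor. Concretely, take $n=5$, $m=4$, $\alpha=1$. The graphs with edge sets $\{12,13,23,45\}$ and $\{14,12,23,35\}$ both have degree sequence $(2,2,2,1,1)$, yet a direct computation shows their probabilities differ by a factor of about $1.06$.

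So ``$\mathbb P(\text{simple})$ is bounded away from $0$'' transfers nothing by itself. The missing ingredient is the comparison in Lemma~\ref{lem:jw-transfer} (Janson--Warnke, Theorem~2.2): $\mathbb P(G^\alpha_{n,m}\in\mathcal G_n)\le B\,\mathbb P(G^{\alpha,*}_{n,m}\in\mathcal G_n)+o(1)$ for families $\mathcal G_n$ of simple graphs, when $m\le Cn$ and $\alpha\ge\alpha_0$. This needs a proof of its own. One route is to show that the accumulated rejection probability along typical trajectories of the simple process is $O(1)$. That requires controlling $\sum_v d_v^2$ and $\sum_{xy\in E}d_xd_y$ throughout the process. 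With this lemma in hand, run your argument for $G^{\alpha,*}_{n,m}$. Then apply the lemma to the family of simple graphs with $m$ edges and $L_1$ outside $(1\pm\delta)\frac{2}{1+2/a}\varepsilon n$. This is exactly how the paper transfers its own bounds in Theorems~\ref{thm:upper-unified} and~\ref{thm:lower-simple-unified}.

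There are also three smaller issues.
\begin{enumerate}
\item Your value of $\nu_3$ is wrong. In fact $\nu_3=\mathbb E[D(D-1)(D-2)]/\mathbb E D=\mu^2(1+1/\alpha)(1+2/\alpha)\to\frac{a+2}{a+1}$ (compare the limit of $R(d_n)$ in Lemma~\ref{lem:good-degree-unified}), not $1+2/a$. Then $2\mu\varepsilon n/\nu_3=\frac{2a}{a+2}\varepsilon n$ is the stated constant. Your displayed line reaches it only because it also divides by an extra $\mathbb E D$, so the two slips cancel.
\item Conditioning on $\{S_n=2m\}$ inflates probabilities by a bounded factor only for events depending on at most a fixed fraction of the coordinates; in general the factor is of order $\sqrt n$. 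Split $[n]$ into two halves before applying Chebyshev.
\item To identify $\nu_3$ you need the empirical third moment to converge, not merely to be $O_p(1)$. Lemma~\ref{lem:jw-degree-law} does provide this.
\end{enumerate}
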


Janson and Warnke~\cite{JansonWarnke2021Preferential} left open the problem of obtaining the sharp asymptotics in the subcritical regime.
Their method of conditioning
on the degree sequence reduces the multigraph version of \(G^\alpha_{n,m}\) to a
configuration model whose degree sequence is asymptotically negative binomial. However, the available configuration model result in the supercritical regime gives only
giant component information. In the subcritical regime, it yields
\(L_1 = O_p(n)\), which does not detect the logarithmic scale $L_1 \asymp \varepsilon^{-2}\log(\varepsilon^3 n).$


By contrast, the subcritical lower bound is a rare
event problem in which one must prove the existence of tree components in a narrow
logarithmic window and deal with their dependence. In this paper, our main result establish the subcritical estimate, allowing the attachment parameter \(\alpha\) to depend on \(n\).


\begin{theorem}\label{subcritical case theo 2}
Assume that \(\alpha=\alpha(n)\to a\in(0,\infty]\), and set $m_c=\frac{\alpha n}{2(\alpha+1)}.$ If \(m=\lfloor m_c(1-\varepsilon)\rfloor\), \(\varepsilon=\varepsilon(n)=o(1)\), and
\(\varepsilon^3 n\to\infty\), then
\begin{equation}\label{subcritical case}
L_1=(1+o_p(1))\,\frac{2(\alpha+2)}{\alpha+1}\,\varepsilon^{-2}\log(\varepsilon^3 n).
\end{equation}
Moreover, if \(m=\lfloor n(1-\varepsilon)/2\rfloor\), \(\alpha\to\infty\), and
\(\alpha\varepsilon\to\infty\), then
\begin{equation}\label{subcritical case II}
L_1=(2+o_p(1))\varepsilon^{-2}\log(\varepsilon^3 n).
\end{equation}
\end{theorem}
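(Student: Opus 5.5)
The plan is to follow the Janson--Warnke reduction: condition on the degree sequence $\mathbf d=(d_v)_{v\in[n]}$ of $G^\alpha_{n,m}$. We use the multigraph version of the process, conditioned on the event that it is simple; this event has probability bounded away from zero, so $o_p$ statements transfer. Given $\mathbf d$, the multigraph is distributed as the configuration model $\mathrm{CM}(\mathbf d)$. The degrees are asymptotically i.i.d.\ negative binomial, $d_v\sim\mathrm{NB}(\alpha,p)$ with $p$ chosen so that the mean is $2m/n$. From Pittel's local limit and concentration results we get that with high probability
\[
\nu(\mathbf d):=\frac{\sum_v d_v(d_v-1)}{\sum_v d_v}=1-\delta,\qquad \delta=(1+o(1))\,\varepsilon\cdot\frac{\alpha+1}{\alpha}\cdot(\text{const}),
\]
together with the third-moment quantity $\sum_v d_v(d_v-1)(d_v-2)/\sum_v d_v$. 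A short computation with the NB moments should give that the standard subcritical exponent, namely $\delta^2/(2\sigma^2)$ measured per vertex, equals $\frac{\alpha+1}{2(\alpha+2)}\varepsilon^2$. The target constant then arises as $\varepsilon^{-2}\cdot 2(\alpha+2)/(\alpha+1)$. We will need the fluctuations of $\nu(\mathbf d)$ to be $o(\varepsilon)$; they are $O_p(n^{-1/2})$, and $n^{-1/2}=o(\varepsilon)$ because $\varepsilon^3n\to\infty$.

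\emph{Upper bound.} Explore the component of a vertex in $\mathrm{CM}(\mathbf d)$ by the usual half-edge exploration. The number of active half-edges is dominated by a random walk whose increments have mean $-\delta$ (size-biased degree minus $2$) and variance $\sigma^2$. An exponential-martingale (Chernoff) bound with tilting parameter $\theta\approx\delta/\sigma^2$ gives
\[
\mathbb P\bigl(|C(v)|\ge k\bigr)\le \exp\Bigl(-(1-o(1))\tfrac{\alpha+1}{2(\alpha+2)}\varepsilon^2 k\Bigr)\cdot O\bigl(\varepsilon^{-1}k^{-1/2}\bigr)
\]
in the regime $k\asymp\varepsilon^{-2}\log(\varepsilon^3 n)$. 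A first-moment sum over $v$, using $\mathbb E\#\{v:|C(v)|\ge k\}/k$, gives $\mathbb P(L_1\ge(1+\eta)k_*)\to0$ for $k_*=\frac{2(\alpha+2)}{\alpha+1}\varepsilon^{-2}\log(\varepsilon^3n)$. The polynomial prefactor $\varepsilon^{-1}k^{-3/2}$, relative to $\varepsilon^{-3}/n$, is what produces $\log(\varepsilon^3 n)$ rather than $\log n$. Depletion of high-degree vertices only lowers the drift, so the domination holds uniformly. For the large-$\alpha$ statement \eqref{subcritical case II}, note that $m=n(1-\varepsilon)/2$ corresponds to $m_c(1-\varepsilon')$ with $\varepsilon'=\varepsilon-1/\alpha+O(\varepsilon/\alpha)$. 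Then $\alpha\varepsilon\to\infty$ gives $\varepsilon'\sim\varepsilon$, and $\frac{2(\alpha+2)}{\alpha+1}\to2$, so this case reduces to the first.

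\emph{Lower bound.} Count tree components. Let $X$ be the number of tree components of size in $[(1-\eta)k_*,(1-\eta/2)k_*]$ in $\mathrm{CM}(\mathbf d)$. Compute $\mathbb E[X\mid\mathbf d]$ exactly by choosing the vertex set $S$ and counting spanning trees with prescribed degrees: the number of trees on $S$ with degrees $(t_v)$ is $\binom{|S|-2}{(t_v-1)}$, and the remaining half-edges must pair among themselves. Then sum over $S$ using a generating-function and saddle-point evaluation that sums $\prod_v d_v^{\underline{t_v}}$. The expected number should be $(\varepsilon^3n)^{\eta'+o(1)}\to\infty$. For the second moment, pairs of disjoint trees give $\mathbb E[X^2]\le(1+o(1))\mathbb E[X]^2+\mathbb E[X]$, since removing $O(k_*)$ vertices changes $\delta$ by only $O(k_*/n)=o(\varepsilon)$. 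Chebyshev's inequality then gives $X>0$ with high probability.

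\emph{Main obstacle.} The hard step is this saddle-point estimate of $\mathbb E[X\mid\mathbf d]$ with relative precision $e^{o(\varepsilon^2k_*)}$, uniformly over typical $\mathbf d$ and for $\alpha=\alpha(n)$ possibly tending to $\infty$. I would first carry it out for i.i.d.\ NB-type weights via the Lagrange-inversion/Borel-type tree generating function, and then transfer to the actual $\mathbf d$ using the concentration of the empirical moments.
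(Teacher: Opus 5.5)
\textbf{Genuine gap: the upper bound does not reach $\log(\varepsilon^3 n)$.} A Chernoff or exponential-martingale bound on the exploration walk only controls the endpoint event $\{S_k\ge 0\}$. Even sharpened by a Bahadur--Rao local estimate under the tilt $\theta\asymp\varepsilon$, it gives $\mathbb P(S_k\ge 0)\asymp(\theta\sqrt k)^{-1}e^{-\Psi k}$ with $\Psi=\frac{\alpha+1}{2(\alpha+2)}\varepsilon^2$. This is exactly your prefactor $\varepsilon^{-1}k^{-1/2}$. Put it into the union bound at $k=(1+\eta)k_*$, where $k_*=\Psi^{-1}\log x_n$ and $x_n=\varepsilon^3 n$:
\[
\frac nk\,\varepsilon^{-1}k^{-1/2}e^{-(1-o(1))\Psi k}\asymp n\varepsilon^{2}(\log x_n)^{-3/2}x_n^{-(1+\eta)+o(1)}=\varepsilon^{-1}x_n^{-\eta+o(1)}(\log x_n)^{-3/2}.
\]
This tends to $0$ for every fixed $\eta>0$ only if $\log(1/\varepsilon)=o(\log x_n)$. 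In general your argument therefore gives only $L_1\le(1+o_p(1))C_\alpha\varepsilon^{-2}\log(\varepsilon^2 n)$. Since $\log(\varepsilon^2 n)=\log(\varepsilon^3 n)+\log(1/\varepsilon)$, this is already twice the target for $\varepsilon=n^{-1/4}$. It is larger by an unbounded factor when $\varepsilon^3 n$ grows only polylogarithmically. Your closing bookkeeping is off by one power of $\varepsilon$: $\varepsilon^{-1}k^{-3/2}\asymp\varepsilon^{2}(\log x_n)^{-3/2}$, not order $\varepsilon^3$.

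\textbf{What is missing, and how the paper handles it.} You need the ballot (hitting-time) factor. The event $|C(v)|\ge k$ requires the walk to stay positive at all times up to $k$, not just at time $k$. By the hitting-time theorem,
\[
\mathbb P(\tau=j)\approx\frac{d_v}{j}\,\mathbb P(S_j=-d_v)\asymp j^{-3/2}e^{-\Psi j},
\]
so $\mathbb P(|C(v)|\ge k)\lesssim\Psi^{-1}k^{-3/2}e^{-\Psi k}$, up to a factor depending on $d_v$. This is smaller than your bound by $(\varepsilon k)^{-1}\asymp\varepsilon/\log x_n$. The union bound then becomes $\asymp n\Psi^{3/2}(\log x_n)^{-5/2}x_n^{-1-\eta}\asymp x_n^{-\eta}(\log x_n)^{-5/2}\to 0$. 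The factor $\Psi^{3/2}\asymp\varepsilon^3$ is the real source of $\varepsilon^3 n$.

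Proving this local estimate for the without-replacement exploration of $\mathrm{CM}(d)$, uniformly over the random degree sequences, is not routine. It is exactly what the paper imports from Coulson--Perarnau (Proposition~\ref{prop:cp-upper}). There, the term $\frac32\log(-\log\phi_n(\theta_n))=3\log\varepsilon+O(1)$ in $T_n$ is what turns $\log n$ into $\log(\varepsilon^3 n)$. The paper's own work is to show that, w.h.p., the degree sequence has $\mu_1,\mu_2,\mu_3$ within $o(\varepsilon)$ of the negative binomial values and bounded exponential moments, so that Coulson--Perarnau applies.

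\textbf{Two lesser points.}
\begin{itemize}
\item $G^\alpha_{n,m}$ is not the multigraph process conditioned on being simple, because the simple process renormalises over non-edges at every step. The transfer must go through Janson--Warnke's comparison $\mathbb P(G^\alpha_{n,m}\in\mathcal G)\le B\,\mathbb P(G^{\alpha,*}_{n,m}\in\mathcal G)+o(1)$.
\item Your quenched saddle-point ``main obstacle'' can be avoided. Averaging over $d\overset{d}{=}(Y_i)\mid\{\sum_i Y_i=\ell\}$ gives the closed form $\mathbb E C_k=\frac nk a_{n,k}\Xi_{n,k}$ and the exact regeneration to $G^{\alpha,*}_{n-k,m-k+1}$. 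Even so, your second-moment step needs relative precision $1+o(1)$, not $e^{o(\varepsilon^2 k_*)}$. Terms of size $k_*^2/n\asymp\varepsilon^{-1}(\log x_n)^2/x_n$, which may diverge, cancel only up to $O(\varepsilon k^2/n)$ by near-criticality. The shift in the criticality parameter must also be $o(\varepsilon/\log x_n)$, not merely $o(\varepsilon)$; the actual shift $O(k_*/n)$ does satisfy this.
\end{itemize}
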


\eqref{subcritical case} and \eqref{subcritical case II} resolve a problem posed in \cite{JansonWarnke2021Preferential}. Together with the supercritical Theorem \ref{supercritical case JW}, Theorem \ref{subcritical case theo 2} completes the asymptotic analysis on both sides of the phase transition. 



The strategy of the proof is as follows. For the upper bound, following \cite{JansonWarnke2021Preferential}, by conditioning on the random degree sequence of the multigraph, we prove that with high probability the degree sequence lies in a class of sequences with bounded empirical distribution, moments, and exponential moments. The upper bound then follows from the subcritical configuration model result of Coulson and Perarnau
\cite{CoulsonPerarnau2023Subcritical}. For the lower bound, we compute the first moment for the number of tree components of a given size, and then prove that after conditioning on a prescribed tree component, the induced multigraph on the remaining vertices again has the law of the same preferential attachment model with updated parameters, which makes it possible to carry out a second moment argument in a logarithmic window of sizes.




The rest of the paper is organized as follows. Section~2 introduces the notation and the basic lemmas from \cite{JansonWarnke2021Preferential,CoulsonPerarnau2023Subcritical} that will be used throughout. The proof of the main subcritical theorem is carried out in Section~3. 

\section{Preliminaries}

\subsection{Model and notation}

We first introduce a multigraph version of $G_{n,m}^{\alpha}$, whose conditional distribution given its degree sequence is the configuration model \cite{JansonWarnke2021Preferential}.
\begin{definition}
Let $\alpha>0$ and $n\in\mathbb N$. Define
$(G^{\alpha,*}_{n,m})_{m\ge0}$ to be the multigraph version of
$(G^\alpha_{n,m})_{m\ge0}$ on $[n]$, starting from the
empty graph, allowing loops and multiple edges.

\end{definition}

 Set $\ell=2m$, $\mu_n=\frac{\ell}{n}=\frac{\alpha}{\alpha+1}(1-\varepsilon),$ $p_n=\frac{\ell}{\alpha n+\ell}=\frac{1-\varepsilon}{\alpha+2-\varepsilon},$ and $x_n=\varepsilon^3 n.$ For a deterministic degree sequence
$d=(d_v)_{v\in[n]}$ with positive even total degree $\ell(d)=\sum_{v=1}^n d_v,$ let $S_d$ be the set of all sequences
$w=(w_1,\ldots,w_{\ell(d)})$ in which each vertex $v\in[n]$ appears
exactly $d_v$ times. Choose $w\in S_d$ uniformly at random, and define the
edge multiset $E_d=\{w_1w_2,w_3w_4,\ldots,w_{\ell(d)-1}w_{\ell(d)}\}.$

Let $\operatorname{CM}(d):=([n],E_d)$ denote the configuration model with degree sequence $d$, and write $\pi_k(d)=\frac1n\sum_{v=1}^n \mathbf 1_{\{d_v=k\}},$ $\mu_r(d)=\frac1n\sum_{v=1}^n d_v^r,$ and $\Delta(d)=\max_{v\in[n]} d_v.$ Let $D_d$ be the degree of a uniformly chosen vertex so that $$\mathbb P(D_{ d}=k)=\pi_k( d),$$ and let $D_d^\star$ be its size biased version defined by

$$\mathbb P(D_{ d}^*=k)=\frac{k\pi_k( d)}{\mu_1( d)}.$$


 Then we introduce the associated exploration parameters, write
\[
Q(d)=\mathbb E(D_d^\star-2)=\frac{\mu_2(d)-2\mu_1(d)}{\mu_1(d)}
\]
for the corresponding drift, and
\[
R(d)=\mathbb E(D_d^\star-2)^2=\frac{\mu_3(d)-4\mu_2(d)+4\mu_1(d)}{\mu_1(d)}
\]
for the quadratic scale of the fluctuations. Finally, for exponential
bounds we set
\[\phi_d(\theta)=\mathbb E e^{\theta(D_d^\star-2)}.\]

For the multigraph $G_{n,m}^{\alpha,\ast}$, let $C_k^{(n,m)}$ denote the number of tree components of size $k$. For $u>r-1$ and $r\in\mathbb N$, write
\[
\binom{u}{r}=\frac{\Gamma(u+1)}{\Gamma(r+1)\Gamma(u-r+1)}.
\]

\subsection{Basic facts}

Before turning to the main result, we recall several preliminaries. Proposition~\ref{prop:cp-upper} gives an upper bound for the subcritical configuration model and is an immediate corollary of \cite{CoulsonPerarnau2023Subcritical}, Theorem~1.5. Since all the required assumptions in Coulson and Perarnau~\cite{CoulsonPerarnau2023Subcritical} are satisfied in our setting, and \eqref{equ-prop} follows by taking \(\varepsilon=\delta\).

\begin{proposition}\label{prop:cp-upper}
Let $(d_n)_{n\ge 1}$ be deterministic degree sequences, and write
\(\ell_n = \ell(\mathbf d_n)\), \(D_n = D_{\mathbf d_n}\),
\(\Delta_n = \Delta(\mathbf d_n)\), and \(\phi_n = \phi_{\mathbf d_n}\).
Assume that:
\begin{enumerate}
\item $D_n \Rightarrow D$ for some random variable \(D\) taking values in \(\mathbb Z_{\ge0}\);
\item $Q(d_n)\to 0$;
\item $\mathbb P(D=0)>0$ and $\mathbb P(D=1)>0$;
\item $\mathbb P(D_n=0)>0$ and $\mathbb P(D_n=1)>0$ for all large $n$;
\item $\mu_4(d_n)\le \Delta_n^{1/2}$ and $\Delta_n\le n^{1/6}$ for all large $n$.
\end{enumerate}
Assume further that, for all large $n$, the equation $\phi_n'(\theta)=0$ has a smallest positive solution $\theta_n\in(0,1)$, and define
\begin{equation}\label{equ-def-tn}
T_n:=\frac{1}{-\log\phi_n(\theta_n)}
\left[
\log n+\frac{3}{2}\log\bigl(-\log\phi_n(\theta_n)\bigr)
-\frac12\log\phi_n''(\theta_n)
+\log\mathbb E\!\left(D_n e^{\theta_n D_n}\right)
\right].
\end{equation}
If $\theta_n\ell_n/T_n\to\infty$, then for every fixed $\delta>0$,
\begin{equation}\label{equ-prop}
\mathbb P\bigl(L_1(\mathrm{CM}(d_n))\le (1+\delta)T_n\bigr)\to 1.
\end{equation}
\end{proposition}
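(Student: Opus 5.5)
Proposition~\ref{prop:cp-upper} should follow directly from Theorem~1.5 of Coulson and Perarnau~\cite{CoulsonPerarnau2023Subcritical}. The plan is to match our hypotheses to theirs one by one and then read off the conclusion with their error parameter set equal to $\delta$.

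\emph{Step 1 (dictionary).} Their theorem concerns $\mathrm{CM}(d_n)$ for deterministic degree sequences. Its hypotheses are:
\begin{itemize}
\item weak convergence of the empirical degree law, $D_n\Rightarrow D$;
\item criticality from below in the sense $Q(d_n)\to0$;
\item non-degeneracy conditions $\mathbb P(D=0),\mathbb P(D=1)>0$ and the analogous ones for $D_n$, which ensure vertices of degree one exist so trees can form;
\item growth constraints on $\mu_4(d_n)$ and $\Delta_n$.
\end{itemize}
These are exactly items 1--5. I will check that our definitions of $Q$, $R$, $\phi_d$ and $D^\star_d$ coincide with theirs, so that the normalisations agree. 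Here $D^\star_d-2$ is the offspring increment of the exploration process, and $\phi_d$ is its moment generating function.

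\emph{Step 2 (the exponential tilt).} Since $\phi_n(0)=1$ and $\phi_n'(0)=Q(d_n)<0$ in the subcritical case, the smallest positive root $\theta_n$ of $\phi_n'$ is the minimiser of $\phi_n$ on $(0,\infty)$. Hence $-\log\phi_n(\theta_n)>0$ is the large-deviation rate for the exploration walk to stay positive for a long time. In their theorem, $T_n$ in \eqref{equ-def-tn} is the location at which the expected number of components of size at least $t$ becomes $O(1)$. This uses a local (saddle point) estimate for the tilted walk: the term $-\tfrac12\log\phi_n''(\theta_n)$ comes from the Gaussian local limit, the term $\tfrac32\log(-\log\phi_n(\theta_n))$ from the ballot/hitting-time factor $t^{-3/2}$, and the term $\log\mathbb E(D_ne^{\theta_nD_n})$ from tilting the first vertex. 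I will verify that their formula is written in exactly these quantities, so our $T_n$ equals theirs.

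\emph{Step 3 (conclusion).} Their theorem gives $L_1(\mathrm{CM}(d_n))\le(1+\varepsilon')T_n$ w.h.p.\ for every fixed $\varepsilon'>0$, under the side condition $\theta_n\ell_n/T_n\to\infty$. This condition guarantees that components of size about $T_n$ use a negligible fraction of the half-edges, so the depletion of the configuration is irrelevant. Substituting $\varepsilon'=\delta$ gives \eqref{equ-prop}.

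The main obstacle I expect is Step~1 together with the side condition. Their theorem may be stated with $\varepsilon_n:=-Q(d_n)$ as the explicit subcritical parameter, and its growth assumptions may be phrased relative to $\varepsilon_n$, for instance requiring $\varepsilon_n^3 n\to\infty$ or something of the form $\Delta_n=o(\varepsilon_n n^{1/3})$. I would then need to check that items 5 and $\theta_n\ell_n/T_n\to\infty$ imply those forms. Since $\theta_n\asymp\varepsilon_n$ and $T_n\asymp\varepsilon_n^{-2}\log(\varepsilon_n^3n)$, the side condition essentially encodes $\varepsilon_n^3 n\to\infty$, and this is the translation I would carry out first.
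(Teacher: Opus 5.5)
Your proposal is correct and follows the paper's own route. The paper obtains this proposition as an immediate consequence of Theorem~1.5 of Coulson and Perarnau~\cite{CoulsonPerarnau2023Subcritical}, treats conditions 1--5 and $\theta_n\ell_n/T_n\to\infty$ as exactly that theorem's hypotheses, and takes its tolerance parameter equal to $\delta$. The paper does no further translation (for instance, no re-parametrisation by $-Q(d_n)$), so your Step~2 heuristics and the check you flag as the main obstacle go beyond what it carries out.
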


\begin{remark}
Condition \textup{4} is in fact eventually implied by
\textup{1} and \textup{3}. Indeed, since \(D_n \Rightarrow D\), we have
\[
    \mathbb P(D_n=i)\to \mathbb P(D=i), \qquad i=0,1.
\]
Thus \(\mathbb P(D_n=0)>0\) and \(\mathbb P(D_n=1)>0\) for all sufficiently
large \(n\). We nevertheless keep Condition \textup{4}, since it
directly ensures lattice span 1 for \(D_n\), and hence includes the lattice
condition required in Coulson--Perarnau~\cite{CoulsonPerarnau2023Subcritical}.
\end{remark}

The remaining results from Janson and Warnke \cite{JansonWarnke2021Preferential} relate the preferential attachment process to its multigraph and configuration model representations.


\begin{lemma}[{\cite{JansonWarnke2021Preferential}, Theorem~2.5, Remark~2.6, and Remark~3.2}]\label{lem:jw-degree-law}
Let $Y_1,\dots,Y_n$ be i.i.d.\ random variables with
\[
\mathbb P(Y_i=k)=\binom{\alpha+k-1}{k}(1-p_n)^{\alpha}p_n^k,
\]
where $$p_n=\frac{2m}{\alpha n+2m}=\frac{1-\varepsilon}{\alpha+2-\varepsilon}.$$
Then
\[
d\bigl(G_{n,m}^{\alpha,\ast}\bigr)\overset{d}= (Y_1,\dots,Y_n)\,\big|\,\{Y_1+\cdots+Y_n=2m\}.
\]
In particular, if $Y^{(n)}\sim\operatorname{NBin}(\alpha,p_n)$, then for every fixed integer $k\ge 0$ and every fixed integer $r\ge 1$,
\[
\pi_k(d)=\mathbb P\bigl(Y^{(n)}=k\bigr)+O_p(n^{-1/2}),
\]
and
$$\mu_r(d)=\mathbb E\bigl[(Y^{(n)})^r\bigr]+O_p(n^{-1/2}).$$
For each fixed $r\ge 1$ one also has $\mathbb E\bigl[(Y^{(n)})^r\bigr]=O(1)$.
\end{lemma}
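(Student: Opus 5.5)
The plan has two parts: derive the exact conditional i.i.d.\ representation of the degree sequence via a P\'olya urn, then obtain the $O_p(n^{-1/2})$ estimates from exchangeability, a second moment bound and a local limit theorem.

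\textbf{Urn representation.} In the multigraph process each new edge is formed by choosing its two endpoints one after the other. A vertex $v$ is chosen with probability proportional to $d_v+\alpha$, where $d_v$ is updated after each half-edge. I take the loop weights of the multigraph version to be the ones consistent with this, i.e.\ proportional to $(d_v+\alpha)(d_v+\alpha+1)$ up to the factor $2$ for unordered pairs. Then the sequence of $\ell=2m$ endpoints is exactly a P\'olya--Eggenberger urn with $n$ colours, initial weight $\alpha$ per colour, and one ball added per draw. The standard urn computation gives the Dirichlet--multinomial law
\[
\mathbb P\bigl(d(G^{\alpha,*}_{n,m})=d\bigr)=\frac{\ell!\,\Gamma(n\alpha)}{\Gamma(n\alpha+\ell)}\prod_{v=1}^n\frac{\Gamma(\alpha+d_v)}{\Gamma(\alpha)\,d_v!},\qquad \textstyle\sum_v d_v=\ell .
\]
On the other hand,
\[
\mathbb P(Y_1=d_1,\dots,Y_n=d_n)=(1-p_n)^{n\alpha}p_n^{\ell}\prod_v\binom{\alpha+d_v-1}{d_v},
\]
and the prefactor is constant on $\{\sum Y_i=\ell\}$. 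The two laws are proportional on that set, hence equal after conditioning. This holds for any $p\in(0,1)$. I choose $p_n$ with $\alpha p_n/(1-p_n)=\ell/n$, i.e.\ $p_n=\ell/(\alpha n+\ell)$, so that $\mathbb E Y=\mu_n$ and the conditioning event is typical.

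\textbf{Moments of $Y^{(n)}$.} Here $p_n\le 1/(\alpha+1)<1$, and $\mathbb E Y=\mu_n\le 1$, $\operatorname{Var}Y=\mu_n/(1-p_n)\in[c,C]$. All moments of $\mathrm{NBin}(\alpha,p_n)$ are polynomials in $\alpha p_n/(1-p_n)$ and $1/(1-p_n)$, so they are bounded uniformly in $n$, including when $\alpha\to\infty$. In that case the law tends to $\mathrm{Poisson}(\mu)$. The same argument gives uniform exponential moments $\mathbb E e^{tY}$ for small $t>0$.

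\textbf{Concentration of $\pi_k$ and $\mu_r$.} Let $S_j=Y_1+\dots+Y_j$. By exchangeability,
\[
\mathbb E[\pi_k(d)]=\mathbb P(Y=k)\,\frac{\mathbb P(S_{n-1}=\ell-k)}{\mathbb P(S_n=\ell)}.
\]
Similarly, $\mathbb E[\pi_k(d)^2]$ involves $\mathbb P(S_{n-2}=\ell-2k)/\mathbb P(S_n=\ell)$. A local limit theorem for the lattice variables $Y_i$ (span $1$, since $\mathbb P(Y=0),\mathbb P(Y=1)$ are bounded below) gives $\mathbb P(S_{n-j}=\ell-a)=(2\pi n\sigma^2)^{-1/2}\bigl(1+O(n^{-1/2})\bigr)$ uniformly in bounded $j,a$, where $\sigma^2=\operatorname{Var}Y$. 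Hence $\mathbb E\pi_k(d)=\mathbb P(Y=k)+O(n^{-1/2})$ and $\operatorname{Var}\pi_k(d)=O(1/n)$, and Chebyshev gives $\pi_k(d)=\mathbb P(Y=k)+O_p(n^{-1/2})$.

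For $\mu_r$ I repeat the computation with $f(Y_1)=Y_1^r$. The ratio $\mathbb P(S_{n-1}=\ell-y)/\mathbb P(S_n=\ell)$ is $1+O(n^{-1/2}(1+y^2))$ for $y\le n^{1/3}$ and is $O(\sqrt n)$ in general. The tail $y>n^{1/3}$ is killed by the exponential moment. This yields mean $\mathbb E Y^r+O(n^{-1/2})$ and variance $O(1/n)$.

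\textbf{Main obstacle.} The main obstacle is uniformity of the local limit theorem with error $O(n^{-1/2})$ when $\alpha=\alpha(n)$ varies, possibly to $\infty$. I would prove it directly by Fourier inversion. The characteristic function of $\mathrm{NBin}(\alpha,p_n)$ is explicit, $\bigl((1-p_n)/(1-p_ne^{it})\bigr)^\alpha$. It admits a uniform Taylor expansion near $t=0$ with bounded third cumulant, and a uniform bound $|\varphi(t)|\le 1-c t^2$ on $[-\pi,\pi]$, which follows from $\mathbb P(Y=0)$ and $\mathbb P(Y=1)$ being bounded below.
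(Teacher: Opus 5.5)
\textbf{The gap is in the concentration step.} The representation part of your proposal is correct. It takes a different and shorter route from the cited source. Janson--Warnke (whose result the paper imports without proof) obtain the conditioned negative binomial law through a continuous-time embedding into independent pure-birth processes with rate $k+\alpha$ in state $k$. You instead compare the Dirichlet--multinomial law directly with the product of negative binomial masses. The embedding also gives the monotone coupling $d_v=D_v(\tau_{2m})\le D_v(t_n)$, which the paper reuses in Lemma~\ref{lem:good-degree-unified}; the present statement does not need it. Your uniform moment bounds for $Y^{(n)}$ are also fine.

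Set $r_1=\mathbb P(S_{n-1}=\ell-k)/\mathbb P(S_n=\ell)$ and $r_2=\mathbb P(S_{n-2}=\ell-2k)/\mathbb P(S_n=\ell)$. Exchangeability gives
\[
\operatorname{Var}\pi_k(d)=\frac1n\operatorname{Var}\mathbf 1_{\{d_1=k\}}+\frac{n-1}{n}\,\mathbb P(Y^{(n)}=k)^2\bigl(r_2-r_1^2\bigr).
\]
The only local limit theorem you state has relative error $1+O(n^{-1/2})$. That yields only $r_2-r_1^2=O(n^{-1/2})$, so $\operatorname{Var}\pi_k(d)=O(n^{-1/2})$. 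Chebyshev then gives $\pi_k(d)-\mathbb P(Y^{(n)}=k)=O_p(n^{-1/4})$, and the same loss occurs for $\mu_r(d)$. So your step ``hence $\operatorname{Var}\pi_k(d)=O(1/n)$'' does not follow. This matters downstream: the paper uses the lemma to get $\mu_j(d)=\mathbb E[(Y^{(n)})^j]+o_p(\varepsilon)$ knowing only $\varepsilon\gg n^{-1/3}$, and an $n^{-1/4}$ rate does not deliver that.

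You can repair this in either of two ways.

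\emph{Edgeworth refinement.} Push your Fourier inversion one order further, to a one-term Edgeworth expansion; this needs a uniformly bounded fourth cumulant, which holds here. Since $\ell=n\mu_n$ is exactly the mean of $S_n$, the evaluation points lie within $O(1+y)$ of the mean. The Gaussian factor then contributes $1+O((1+y^2)/n)$ and the odd $\kappa_3$-term contributes $O((1+|y|)/n+(1+|y|)^3/n^2)$. Hence $r_1(y)=1+O((1+y^2)/n)$ for $y\le n^{1/3}$, and $r_2(y_1,y_2)-r_1(y_1)r_1(y_2)=O((1+y_1^2+y_2^2)/n)$. This gives covariances of order $1/n$ for both $\pi_k(d)$ and $\mu_r(d)$.

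\emph{Splitting into halves.} Alternatively, avoid covariances altogether. Split $[n]$ into halves $V_1,V_2$. The conditional law of $(d_v)_{v\in V_1}$ has density $\mathbb P(S_{|V_2|}=\ell-s)/\mathbb P(S_n=\ell)$ with respect to the i.i.d.\ law. This density is $O(1)$ uniformly, because your bound $|\varphi(t)|\le 1-ct^2$ gives $\sup_x\mathbb P(S_{|V_2|}=x)\le\frac1{2\pi}\int_{-\pi}^{\pi}|\varphi(t)|^{|V_2|}\,dt=O(n^{-1/2})$. Meanwhile $\mathbb P(S_n=\ell)\ge c\,n^{-1/2}$ needs only a leading-order local limit theorem. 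Chebyshev for i.i.d.\ variables on each half then gives the $O_p(n^{-1/2})$ rates for $\pi_k(d)$ and $\mu_r(d)$ directly.
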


\begin{lemma}[{\cite{JansonWarnke2021Preferential}, Theorem~2.4}]\label{lem:jw-cm}
For any degree sequence $d=(d_v)_{v\in[n]}$ satisfying $\sum_{v\in[n]} d_v=2m,$ the conditional law of $G_{n,m}^{\alpha,\ast}$ given $d\bigl(G_{n,m}^{\alpha,\ast}\bigr)=d$ is $\mathrm{CM}(d)$. Equivalently, for every family $\mathcal H_n$ of multigraphs with vertex set $[n]$ and $m$ edges,
\[
\mathbb P\bigl(G_{n,m}^{\alpha,\ast}\in \mathcal H_n \,\big|\, d(G_{n,m}^{\alpha,\ast})=d\bigr)
=
\mathbb P\bigl(\mathrm{CM}(d)\in \mathcal H_n\bigr).
\]
\end{lemma}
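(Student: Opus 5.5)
The plan is to realise the multigraph process $(G^{\alpha,\ast}_{n,m})_{m\ge0}$ as a Pólya urn on vertex labels. Then the lemma reduces to the exchangeability of the urn sequence. The precise convention for the multigraph steps, inherited from \cite{JansonWarnke2021Preferential}, is as follows. Each new edge is created by drawing two endpoints one after the other. Writing $d_v$ for the current degree and $\ell$ for the current total degree, the first endpoint is $v$ with probability $(d_v+\alpha)/(\ell+\alpha n)$. The degree of $v$ is then increased by one, and the second endpoint is $w$ with probability $(d_w+\alpha+\mathbf 1_{\{w=v\}})/(\ell+1+\alpha n)$.

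The first step is to check that this two-stage draw reproduces the transition rule of the multigraph version. For $v\ne w$ the unordered pair $\{v,w\}$ arises in two orders, so its probability is proportional to $2(d_v+\alpha)(d_w+\alpha)$. A loop at $v$ has probability proportional to $(d_v+\alpha)(d_v+\alpha+1)$. This matches the natural multigraph analogue of the preferential rule, with loops and multiple edges allowed. I will take it as the definition of $G^{\alpha,\ast}_{n,m}$ consistent with \cite{JansonWarnke2021Preferential}.

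The second step is to record the random word $W=(W_1,\dots,W_{2m})\in[n]^{2m}$ of successive endpoints, so that the edges of $G^{\alpha,\ast}_{n,m}$ are exactly $W_1W_2,\,W_3W_4,\dots,W_{2m-1}W_{2m}$. Let $w$ be any fixed word in which vertex $v$ appears $d_v$ times. Multiplying the successive conditional probabilities gives
\[
\mathbb P(W=w)=\frac{\prod_{v\in[n]}\alpha(\alpha+1)\cdots(\alpha+d_v-1)}{\alpha n(\alpha n+1)\cdots(\alpha n+2m-1)} .
\]
The reason is that the $j$-th appearance of $v$ contributes the numerator factor $\alpha+j-1$, whatever the positions, and the $t$-th draw contributes the denominator $\alpha n+t-1$. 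This is the standard Pólya urn computation. The key point is that the right-hand side depends on $w$ only through its count vector $d$, so $W$ is exchangeable.

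The third step is the conclusion. Conditioning on $d(G^{\alpha,\ast}_{n,m})=d$ is the same as conditioning on $W\in S_d$. By the display above, $W$ is then uniform on $S_d$. Pairing consecutive entries of a uniform element of $S_d$ is exactly the definition of $\mathrm{CM}(d)$. Hence for every family $\mathcal H_n$ of multigraphs the conditional probability equals $\mathbb P(\mathrm{CM}(d)\in\mathcal H_n)$. Summing $\mathbb P(W=w)$ over $S_d$ also yields the Dirichlet–multinomial law of $d$, which is the conditioned negative binomial representation of Lemma~\ref{lem:jw-degree-law}; this serves as a consistency check.

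The main obstacle is the first step rather than the computation. One must make sure that the ordered two-stage draw, including the $+1$ self-reinforcement within a step for loops, is exactly the process the paper calls $G^{\alpha,\ast}_{n,m}$. A one-shot rule with weight $(d_v+\alpha)(d_w+\alpha)$ for loops would break exchangeability. I would therefore fix the loop weight by insisting on the urn formulation, and verify that the unordered-edge probabilities coincide with those above.
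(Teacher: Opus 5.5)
Your argument is correct and is essentially the proof behind this statement. The paper gives no proof of its own; it imports Theorem~2.4 of \cite{JansonWarnke2021Preferential}, where the multigraph process is built by the same sequential half-edge (P\'olya urn) rule. The conclusion follows there exactly as in your proposal: $\mathbb P(W=w)$ depends only on the count vector of $w$, so given the degrees $W$ is uniform on $S_d$, and pairing consecutive entries gives $\mathrm{CM}(d)$. Your point that the loop weight must be $(d_v+\alpha)(d_v+\alpha+1)$ rather than $(d_v+\alpha)^2$ is precisely the convention that makes this exchangeability hold. It is worth stating explicitly, since the paper's own definition of $G^{\alpha,\ast}_{n,m}$ leaves it implicit.
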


\begin{lemma}[{\cite{JansonWarnke2021Preferential}, Theorem~2.2}]\label{lem:jw-transfer}
Given $C,\alpha_0>0$, there exists $B=B(C,\alpha_0)>0$ such that the following holds whenever $1\le m\le Cn$ and $\alpha\ge \alpha_0$. For every family $\mathcal G_n$ of simple graphs with vertex set $[n]$ and exactly $m$ edges,
\[
\mathbb P\bigl(G_{n,m}^{\alpha}\in \mathcal G_n\bigr)
\le
B\,\mathbb P\bigl(G_{n,m}^{\alpha,\ast}\in \mathcal G_n\bigr)+o(1).
\]
\end{lemma}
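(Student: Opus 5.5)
The plan is to compare the two processes path by path, that is, along sequences of edge additions rather than only final graphs. Both $(G^\alpha_{n,m})$ and $(G^{\alpha,*}_{n,m})$ are Markov chains driven by the same weights $(d_v+\alpha)(d_w+\alpha)$. The only difference is that the multigraph chain also allows loops and repeated pairs (I take the loop weight at $v$ to be of order $(d_v+\alpha)^2$).

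Fix a sequence $e_1,\dots,e_m$ of distinct non-loop pairs producing a simple graph $H$. Let $W_j$ be the total weight of all admissible pairs in the multigraph chain after $j$ steps, and let $W_j^{\rm bad}$ be the part of it carried by loops and by already present edges. Writing $\beta_j=W_j^{\rm bad}/W_j$, I expect the exact identity
\[
\mathbb P\bigl(G^\alpha\text{ follows }(e_j)\bigr)=\mathbb P\bigl(G^{\alpha,*}\text{ follows }(e_j)\bigr)\prod_{j<m}(1-\beta_j)^{-1}.
\]
Here $\beta_j$ is a deterministic function of the path. It is bounded by
\[
\beta_j\le \frac{C\bigl(\sum_v(d_v+\alpha)^2\bigr)}{(2j+\alpha n)^2}+\frac{\sum_{uv\in E}(d_u+\alpha)(d_v+\alpha)}{W_j}.
\]
Using $W_j\asymp(\alpha n+2j)^2$ and $j\le m\le Cn$, together with $ab\le (a^2+b^2)/2$ to control the edge sum by $\Delta\cdot\sum_v(d_v+\alpha)^2$, I will aim for
\[
\beta_j\le \frac{C'\bigl(\Delta_j+1\bigr)\,S_j}{n^2},\qquad S_j:=\sum_v(d_v+\alpha)^2 .
\]
For the sum over paths I actually want the cruder bound $\sum_j\beta_j\le C''\max_j (S_j/n)$ plus an edge-weight term of the same order. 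So the real quantity to control is $\max_{j\le m}S_j/n$, together with $\max_j\sum_{uv\in E_j}(d_u+\alpha)(d_v+\alpha)/n$.

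Call a path \emph{good} if $\sum_{j<m}\beta_j\le K$ and all $\beta_j\le 1/2$. On good paths the product above is at most $e^{2K}=:B$. Summing over good paths ending in $\mathcal G_n$ gives
\[
\mathbb P(G^\alpha_{n,m}\in\mathcal G_n)\le B\,\mathbb P(G^{\alpha,*}_{n,m}\in\mathcal G_n)+\mathbb P(G^\alpha\text{ path is bad}).
\]
The constant $K$, and hence $B$, may depend only on $C$ and $\alpha_0$. It therefore remains to show that, for a suitable fixed $K$, the simple process has a bad path with probability $o(1)$.

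This last step is the main obstacle. First, I would compute the one-step drift of $S_j$ under the simple chain. Adding the edge $vw$ increases $S_j$ by $2(d_v+\alpha)+2(d_w+\alpha)+2$, and $v,w$ are chosen roughly size-biased by $d+\alpha$. So the expected increment is about $4S_j/(\alpha n+2j)+2$, up to a factor $(1-\beta_j)^{-1}$. This yields a linear differential inequality and hence $\mathbb E S_j\le c(C,\alpha_0)\,n$ uniformly in $j\le m$.

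Expectation bounds only give boundedness in probability, whereas I need failure probability $o(1)$ at a fixed $K$. To get this, I would upgrade to concentration. I would first show that the maximum degree stays at most $\log^2 n$ with high probability, by dominating each degree by a Pólya-urn / negative-binomial growth. Then I would apply a Freedman-type martingale inequality to $S_j$, whose increments are $O(\log^2 n)$, giving $S_j\le 2c\,n$ for all $j\le m$ with high probability.

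The edge term $\sum_{uv\in E}(d_u+\alpha)(d_v+\alpha)$ is handled the same way. Its one-step increment is $O(\Delta^2+S_j\Delta/n)$ in expectation, so it is also $O(n)$ with high probability. On the intersection of these events every $\beta_j=O(\log^2 n/n)\le 1/2$ and $\sum_j\beta_j\le K$, which completes the comparison.
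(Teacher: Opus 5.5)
The paper gives no proof of this lemma (it is quoted from Janson--Warnke), so your argument has to stand on its own. The architecture is right. Along a simple path both chains use the weights $(d_v+\alpha)(d_w+\alpha)$, so the path-wise likelihood ratio is exactly $\prod_{j<m}(1-\beta_j)^{-1}$. Truncating to paths with $\sum_j\beta_j\le K$ then gives precisely the form $B\,\mathbb P(\cdot)+o(1)$, provided bad paths have probability $o(1)$ under the simple chain for a \emph{fixed} $K$; you are right that Markov's inequality is not enough for this. However, two of your quantitative steps fail as written.

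\emph{Uniformity in $\alpha$.} Since $S_j=\sum_v(d_v+\alpha)^2\ge n\alpha^2$ and the edge weight is at least $j\alpha^2$, the claims $\mathbb E S_j\le c(C,\alpha_0)n$, $\beta_j\le C'(\Delta_j+1)S_j/n^2$ and $\sum_j\beta_j\le C''\max_j S_j/n$ can only give $K\gtrsim\alpha^2$. Replacing $W_j\asymp(\alpha n+2j)^2$ by $n^2$ discards exactly the factor $\alpha^2$ you need. The lemma asks for $B=B(C,\alpha_0)$ uniformly over $\alpha\ge\alpha_0$, and the paper uses it with $\alpha=\alpha(n)\to\infty$ at an arbitrary rate, so this is essential. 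The fix is to keep $W_j=\frac12(\alpha n+2j)(\alpha n+2j+1)\ge(\alpha n)^2/2$ and expand the defect weight, using the P\'olya loop weight $(d_v+\alpha)(d_v+\alpha+1)/2$. Loops give $\frac12\bigl[\sum_v d_v^2+(2\alpha+1)2j+n\alpha(\alpha+1)\bigr]$, and present edges give $\sum_{uv\in E}d_ud_v+\alpha\sum_v d_v^2+\alpha^2 j$. The deterministic parts sum over $j<m$ to a constant depending only on $C,\alpha_0$ (e.g.\ $\sum_{j<m}2j/n^2\le C^2$). What must then be bounded w.h.p.\ is only $\sum_v d_v^2\le Kn$ and $X_m:=\sum_{uv\in E_m}d_ud_v\le Kn$, both of which are nondecreasing in $j$.

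\emph{The edge term.} An expected one-step increment of order $\Delta^2+S_j\Delta/n$, summed over $m\asymp n$ steps, gives $O(n\Delta^2)=O(n\log^4 n)$, not $O(n)$. Then $\sum_j\beta_j$ is only polylogarithmic and $e^{2K}$ is not a constant. Worst-case degree bounds are fine for the jump size in Freedman's inequality, but the drift must be averaged. The endpoints are size-biased by $d+\alpha$, so on $\{\sum_v d_v^2\le Kn\}$ the conditional mean of the increment $(d_v+1)(d_w+1)+\sum_{u\sim v}d_u+\sum_{u\sim w}d_u$ of $X_j$ is $O(1+X_j/n)$. This closes a linear differential inequality as for $S_j$. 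More simply, use $X_j\le\frac12\sum_v d_v^3$ and run your drift-plus-Freedman argument on $\sum_v d_v^3$, whose drift is $O(1+\sum_v d_v^2/n+\sum_v d_v^3/n)$ uniformly in $\alpha$ and whose jumps are $O(\Delta^2)$.

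Finally, all these drift and P\'olya-urn domination steps are carried out under the simple chain, so you need an a priori bound on $(1-\beta_j)^{-1}$ to avoid circularity. The estimate $W_j-W_j^{\mathrm{bad}}\ge\alpha^2\bigl(\binom{n}{2}-m\bigr)$ gives $1-\beta_j\ge(1-o(1))\bigl(\alpha_0/(\alpha_0+2C)\bigr)^2$ on every path, uniformly in $\alpha$. Alternatively, stop the chain when $\beta_j>1/2$.
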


\section{Main Result and Proof}\label{sec:subcritical}

\begin{theorem}\label{thm:main-fixed-alpha}
Assume that $\alpha=\alpha(n)\to a\in(0,\infty]$, and let $m_c$ and $m$ defined as above with $\varepsilon=\varepsilon(n)\downarrow 0$ and $\varepsilon^3 n\to\infty$.
For $C_\alpha=\frac{2(\alpha+2)}{\alpha+1}.$ Then
\[
L_1(G_{n,m}^{\alpha,\ast})=(1+o_p(1))C_\alpha\,\varepsilon^{-2}\log(\varepsilon^3 n),
\]
and
\[
L_1(G_{n,m}^{\alpha})=(1+o_p(1))C_\alpha\,\varepsilon^{-2}\log(\varepsilon^3 n).
\]
\end{theorem}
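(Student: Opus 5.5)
We prove matching upper and lower bounds. The upper bound comes from the configuration-model representation and Proposition~\ref{prop:cp-upper}. The lower bound is a second moment argument on tree components of size near $T:=C_\alpha\varepsilon^{-2}\log(\varepsilon^3 n)$. The simple-graph statement for $G^\alpha_{n,m}$ is then read off from the multigraph one.

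\textbf{Upper bound.} By Lemma~\ref{lem:jw-degree-law}, with probability $1-o(1)$ the degree sequence $d=d(G^{\alpha,*}_{n,m})$ lies in a class $\mathcal D_n$ with the following properties:
\begin{itemize}
\item $\pi_k(d)=\mathbb P(Y^{(n)}=k)+O(n^{-1/2}\omega)$ for $k$ up to a slowly growing level;
\item $\mu_r(d)=\mathbb E(Y^{(n)})^r+O(n^{-1/2}\omega)$ for $r\le 4$;
\item $\Delta(d)\le C\log n$, since negative binomial tails are geometric;
\item $\sum_v e^{\theta d_v}\le n\,\mathbb E e^{\theta Y^{(n)}}(1+o(\varepsilon^2))$ uniformly for $\theta$ in a small interval.
\end{itemize}
The exponential-moment bound needs a conditional Chernoff/local-CLT argument, since $\mathbb P(\sum Y_i=2m)\asymp n^{-1/2}$ and this only costs a polynomial factor.

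For such $d$ the moments of $\mathrm{NBin}(\alpha,p_n)$ give an explicit computation:
\[
\mathbb E Y(Y-1)=\alpha(\alpha+1)\frac{p^2}{(1-p)^2},
\qquad
Q(d)=\frac{\mathbb E Y(Y-2)}{\mathbb E Y}=-\frac{\alpha+1}{\alpha}\,\varepsilon\,(1+o(1)),
\]
and
\[
R(d)\to \mathbb E(D^\star-2)^2=\frac{\alpha+2}{\alpha}\,(1+o(1))\quad\text{(to be verified).}
\]
Expanding near $\theta=0$ gives $\theta_n\approx -Q/R$ and $-\log\phi(\theta_n)\approx Q^2/(2R)$. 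Hence
\[
T_n\approx \frac{2R}{Q^2}\Bigl(\log n+\tfrac32\log(\varepsilon^2)\Bigr)=\frac{2(\alpha+2)}{\alpha}\cdot\frac{\alpha^2}{(\alpha+1)^2}\,\varepsilon^{-2}\log(\varepsilon^3 n),
\]
which must be matched to $C_\alpha$. Since $m_c$ already absorbs a factor, I will recheck the normalization with $\mu_1=\alpha/(\alpha+1)$ so that the constant is exactly $2(\alpha+2)/(\alpha+1)$. The extra terms in $T_n$ are $O(\varepsilon^{-2})=o(T)$.

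The hypotheses of Proposition~\ref{prop:cp-upper} hold on $\mathcal D_n$: $\mu_4$ is bounded, $\Delta\le n^{1/6}$, and $\theta_n\ell_n/T_n\asymp \varepsilon n/(\varepsilon^{-2}\log)\to\infty$. The proposition therefore applies uniformly, and Lemma~\ref{lem:jw-cm} transfers the bound to $G^{\alpha,*}_{n,m}$. The case $a=\infty$ is handled the same way with $\alpha$-uniform estimates.

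\textbf{Lower bound.} Fix $\delta>0$ and let $I=[(1-\delta)T,(1-\delta/2)T]$.

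First moment. Summing over labelled trees on $k$ vertices and using the explicit product form of the law of $G^{\alpha,*}_{n,m}$ (the Pólya-urn weights $\prod_v \alpha^{(d_v)}$, with rising factorials, divided by the total normalization), compute $\mathbb E C_k^{(n,m)}$. Cayley-type counting with degree weights gives $\sum_{\text{trees}}\prod_v \alpha^{(d_v)}$ via a generating function. Stirling then yields
\[
\mathbb E C_k\asymp n\,k^{-5/2}\exp\Bigl(-\tfrac{k\varepsilon^2}{C_\alpha}(1+o(1))\Bigr),
\]
with the exponent $\varepsilon^2/C_\alpha$ forced to agree with the upper-bound rate. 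Summing over $k\in I$ gives $\mathbb E X\to\infty$, where $X=\sum_{k\in I}C_k$.

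Second moment. Condition on a specific tree $\tau$ of size $k$ being a component. The remaining multigraph on $n-k$ vertices is then distributed as $G^{\alpha,*}_{n-k,m-k+1}$; this exchangeability/urn property follows from the product formula. Hence
\[
\mathbb E[C_kC_j]\le \mathbb E C_k\,\cdot\,\mathbb E C_j^{(n-k,m-k+1)}.
\]
The shift changes $\varepsilon$ by $O(k/n)$, so the exponent changes by $O(k\varepsilon/n\cdot j)=o(1)$ because $T^2\varepsilon/n\to0$. This gives $\mathbb E X^2\le(1+o(1))(\mathbb E X)^2+\mathbb E X$, and Chebyshev yields $X>0$ whp.

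\textbf{Transfer to simple graphs.} For the simple graph, Lemma~\ref{lem:jw-transfer} transfers the upper bound directly (the failure event has vanishing probability). For the lower bound I would use that $G^{\alpha}_{n,m}$ is $G^{\alpha,*}_{n,m}$ conditioned on simplicity, which has probability bounded below. An event of probability $o(1)$ in the multigraph therefore stays $o(1)$ in the simple graph.

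The main obstacle is the precise first-moment computation, with the correct constant $C_\alpha$ and uniformity in $\alpha\to\infty$, together with justifying the conditional self-similarity used in the second moment.
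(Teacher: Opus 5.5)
Your architecture matches the paper's: a high-probability class of good degree sequences fed into Proposition~\ref{prop:cp-upper} for the upper bound, and first and second moments of tree components with the regeneration of Proposition~\ref{prop:regeneration} for the lower bound. However, two steps fail as written.

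\textbf{The lower-bound transfer uses a false identity.} For finite $\alpha$, $G^{\alpha}_{n,m}$ is not $G^{\alpha,*}_{n,m}$ conditioned on simplicity. The simple process normalizes step $t$ by $\sum_{\{x,y\}\notin E(G_t),\,x\ne y}(d_x+\alpha)(d_y+\alpha)$. This depends on the current graph, through $\sum_x(d_x+\alpha)^2$ and the weights of edges already present. The multigraph process instead has a normalizer depending only on $t$, which is what yields the negative binomial/configuration model description. So the two laws of a simple edge sequence differ by a path-dependent factor, and this is why Lemma~\ref{lem:jw-transfer} is only a one-sided comparison with a constant $B$ and an additive $o(1)$. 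The fix is to apply Lemma~\ref{lem:jw-transfer} to $\{G\text{ simple}: e(G)=m,\ L_1(G)<(1-\delta)c_n\}$, exactly as you did for the upper bound; the lemma holds for every family of simple graphs.

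\textbf{Your drift and variance are wrong.} There is no normalization left to ``recheck''. For $Y^{(n)}\sim\operatorname{NBin}(\alpha,p_n)$ you have $\mathbb E[Y^{(n)}(Y^{(n)}-1)]=\frac{\alpha(1-\varepsilon)^2}{\alpha+1}$ and $\mathbb E Y^{(n)}=\frac{\alpha(1-\varepsilon)}{\alpha+1}$. Hence $\mathbb E[Y^{(n)}(Y^{(n)}-2)]=-\varepsilon\,\mathbb E Y^{(n)}$, i.e.\ $Q=-\varepsilon+o(\varepsilon)$. Since $Y^{(n),\star}-1\sim\operatorname{NBin}(\alpha+1,p_n)$, you get $R=\frac{(1-\varepsilon)(\alpha+2-\varepsilon)}{\alpha+1}+\varepsilon^2+o(1)=\frac{\alpha+2}{\alpha+1}+o(1)$. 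Then $2R/Q^2\sim C_\alpha\varepsilon^{-2}$ directly. Your values give $\frac{2\alpha(\alpha+2)}{(\alpha+1)^2}\varepsilon^{-2}$, off by the factor $\frac{\alpha}{\alpha+1}$. Since the theorem is precisely about this constant, it cannot be left ``to be verified''.

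\textbf{The first moment is not just ``Stirling then yields''.} Your weighted Cayley count from the P\'olya product form is equivalent to the paper's size-biasing computation of $a_{n,k}$, so the problem is not there but in the normalization. In $\mathbb E C_k=\frac nk\,a_{n,k}\,\Xi_{n,k}$ (Proposition~\ref{prop:tree-first-moment}), $\Xi_{n,k}$ is the product of two factors:
\begin{itemize}
\item $\prod_{j<k}\frac{(n-j)\mu_n}{\ell-2j+1}=\exp\bigl((\frac1{\mu_n}-\frac12)\frac{k^2}{n}+O(\frac kn+\frac{k^3}{n^2})\bigr)$;
\item the local-probability ratio $\frac{\mathbb P(S_{n-k}=\ell-2k+2)}{\mathbb P(S_n=\ell)}$, whose logarithm is $-(\frac1{\mu_n}-\frac12)\frac{k^2}{n}+O(\varepsilon\frac{k^2}{n}+\frac kn+\frac{k^3}{n^2})$.
\end{itemize}
In your window $k^2/n\asymp\log^2x_n/(\varepsilon x_n)$, which need not tend to $0$ (take $\varepsilon=n^{-1/4}$). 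So neither factor is $1+o(1)$ on its own. You must prove the cancellation, which uses $(\alpha+1)\mu_n-\alpha=-\alpha\varepsilon$; this is the content of Lemma~\ref{lem:xi-unified}.

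This precision is genuinely needed. Your second-moment step tracks only the shift in the exponential factor. But Chebyshev needs $\mathbb E C_j^{(n-k,m-k+1)}=(1+o(1))\,\mathbb E C_j^{(n,m)}$ uniformly. So the normalization factors for $(n,m)$ and $(n-k,m-k+1)$ must also agree to $1+o(1)$, which a bound $\mathbb E C_k\asymp nk^{-5/2}e^{-\Psi_nk}$ does not give.

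\textbf{Minor point.} You do not need $\sum_v e^{\theta d_v}\le n\,\mathbb E e^{\theta Y^{(n)}}(1+o(\varepsilon^2))$. On a fixed $\theta$-interval it is false when $\varepsilon^4n\not\to\infty$, because of $n^{-1/2}$ fluctuations. Bounded $\frac1n\sum_v d_v^re^{\beta d_v}$ together with $\mu_1,\mu_2,\mu_3$ accurate to $o(\varepsilon)$ suffices, since $\theta_n=O(\varepsilon)$.
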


\subsection{The upper bound}

Coulson and Perarnau \cite{CoulsonPerarnau2023Subcritical} established upper bounds for the largest component of subcritical configuration models with prescribed degree sequences. Under mild regularity conditions, their bounds are asymptotically sharp for given degree sequences and improve the earlier barely subcritical estimates of Hatami and Molloy \cite{HatamiMolloy2012ScalingWindow}. Their result supplies our upper bound after conditioning on the random degree sequence.

We first construct a deterministic set \(\mathcal D_n\) of admissible degree sequences such that the degree sequence of \(G^{\alpha,*}_{n,m}\) belongs to \(\mathcal D_n\) with high probability. Let $Y^{(n)}\sim\operatorname{NBin}(\alpha,p_n)$, where $p_n=\frac{1-\varepsilon}{\alpha+2-\varepsilon}.$ Define the limiting law
\[
Y\sim
\begin{cases}
\operatorname{NBin}\!\left(a,\dfrac{1}{a+2}\right), & a<\infty,\\[1ex]
\operatorname{Poisson}(1), & a=\infty.
\end{cases}
\]
Since $\alpha\to a\in(0,\infty]$, there exists $\alpha_*>0$ such that $\alpha\ge \alpha_*$ for all sufficiently large $n$. 

\begin{lemma}\label{lem:good-degree-unified}
There exist sequences $M_n\to\infty, \delta_n\downarrow0, \rho_n\downarrow0,$ constants $C_0<\infty$ and $\beta>0$, and sets $\mathcal D_n$ of degree sequences such that
\[
\mathbb P\bigl(d(G_{n,m}^{\alpha,\ast})\in\mathcal D_n\bigr)=1-o(1).
\]
Moreover, every $d_n\in\mathcal D_n$ satisfies 
\begin{align}
\max_{0\le k\le M_n}\Bigl|\pi_k(d_n)-\mathbb P(Y^{(n)}=k)\Bigr|&\le \delta_n,\label{eq:good-pi}\\
\Bigl|\mu_j(d_n)-\mathbb E\bigl[(Y^{(n)})^j\bigr]\Bigr|&\le \rho_n\varepsilon \qquad (j=1,2,3),\label{eq:good-mu123}\\
\mu_4(d_n)&\le C_0,\label{eq:good-mu4}\\
\frac1n\sum_{v=1}^n d_{n,v}^r e^{\beta d_{n,v}}&\le C_0 \qquad (r=1,2,3,4).\label{eq:good-exp}
\end{align}
Each such $d_n\in\mathcal D_n$ satisfies assumptions \textup{(1)}--\textup{(5)}
of Proposition~\ref{prop:cp-upper} with limit law $D=Y$, and
\[
Q(d_n)=-\varepsilon+o(\varepsilon),
\]
$$R(d_n)=\frac{\alpha+2}{\alpha+1}+o(1).$$
\end{lemma}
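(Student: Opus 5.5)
The plan is to use the exact representation of Lemma~\ref{lem:jw-degree-law}: $d(G^{\alpha,*}_{n,m})$ has the law of i.i.d.\ $Y_i\sim\operatorname{NBin}(\alpha,p_n)$ conditioned on $\sum_i Y_i=2m$. Every event that is unlikely under the i.i.d.\ law stays unlikely after conditioning. The reason is the local limit theorem for sums of i.i.d.\ lattice variables with span $1$ and variance of order $n$ (uniformly, since $\alpha\ge\alpha_*$ and the variance of $Y^{(n)}$ stays bounded away from $0$ and $\infty$). It gives $\mathbb P(\sum Y_i=2m)\ge c\,n^{-1/2}$, because $2m$ lies within $O(1)$ of $n\mathbb E Y^{(n)}=2m$. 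Hence
\[
\mathbb P(\text{bad}\mid\textstyle\sum Y_i=2m)\le C\sqrt n\,\mathbb P(\text{bad}).
\]
It therefore suffices to show that each defining condition fails with i.i.d.\ probability $o(n^{-1/2})$. I would define $\mathcal D_n$ as the intersection of the four conditions \eqref{eq:good-pi}--\eqref{eq:good-exp}, with the parameters fixed as follows.

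\textbf{Choice of parameters.} Pick $\beta>0$ with $\sup_n \mathbb E[(Y^{(n)})^8 e^{2\beta Y^{(n)}}]<\infty$. This is possible since $p_n\le 1/(\alpha_*+1)$ eventually, so the generating function converges at $e^{2\beta}p_n<1$ uniformly in $n$; in the case $a=\infty$ one also has uniform Poisson-like tails. Set $C_0=2\sup_n\max_{r\le4}\mathbb E[(Y^{(n)})^r e^{\beta Y^{(n)}}]+2\sup_n\mathbb E (Y^{(n)})^4$. Conditions \eqref{eq:good-mu4} and \eqref{eq:good-exp} are deviations of an average of i.i.d.\ variables above twice its mean. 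Chebyshev with the bounded second moment gives probability $O(1/n)=o(n^{-1/2})$. For \eqref{eq:good-pi}, take $M_n=\log n$ and $\delta_n=n^{-1/3}$. Hoeffding for each indicator plus a union bound over $k\le M_n$ gives $M_n e^{-2n^{1/3}}=o(n^{-1/2})$.

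\textbf{The main obstacle} is \eqref{eq:good-mu123}: the error must be $o(\varepsilon)$, and the usual $O_p(n^{-1/2})$ fluctuation is not small enough after multiplying by $\sqrt n$. Note that $\varepsilon\gg n^{-1/3}$, so $\varepsilon\sqrt n\to\infty$. For $j=1$ the condition is automatic, since $\mu_1(d)=2m/n=\mathbb E Y^{(n)}$ up to the floor. For $j=2,3$ I would avoid the crude $\sqrt n$ transfer and use Chebyshev directly under the conditional law. Conditional on $\sum Y_i=2m$ the $Y_i$ are exchangeable. Their conditional pairwise covariances of $Y_i^j$ are $O(1/n)$, and conditional means differ from unconditional ones by $O(1/n)$; both follow from the local CLT with one or two variables removed, an Edgeworth-type ratio estimate. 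This yields $\operatorname{Var}(\mu_j(d))=O(1/n)$ conditionally. Then $\rho_n:=(\varepsilon\sqrt n)^{-1/2}\to0$ works, since the failure probability is $O\bigl(1/(n\rho_n^2\varepsilon^2)\bigr)=O\bigl(1/(\varepsilon\sqrt n)\bigr)\to0$. If this ratio estimate proves messy, a fallback is a truncation plus Bernstein argument at level $\rho_n\varepsilon$, which has tail $\exp(-c\,n\rho_n^2\varepsilon^2)$ and beats the $\sqrt n$ factor once $\rho_n$ is chosen so that $n\rho_n^2\varepsilon^2\gg\log n$. That requires $\varepsilon^2 n\gg\log n$, which holds because $\varepsilon^2 n\ge(\varepsilon^3n)^{2/3}n^{1/3}$.

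\textbf{Verification of the consequences.} Condition (1) follows because $\pi_k(d_n)\to\mathbb P(Y=k)$ for each $k$, using \eqref{eq:good-pi} and $\operatorname{NBin}(\alpha,p_n)\Rightarrow Y$; in the case $a=\infty$ this is the Poisson limit, since $\alpha p_n\to1$. Conditions (3) and (4) follow since $\mathbb P(Y=0),\mathbb P(Y=1)>0$. For condition (5), \eqref{eq:good-exp} gives $n^{-1}e^{\beta\Delta_n}\le C_0$, so $\Delta_n\le \beta^{-1}\log(C_0n)\le n^{1/6}$. We also need $\mu_4\le\Delta_n^{1/2}$; if $\Delta_n$ happens to be small, I would add to $\mathcal D_n$ the requirement $\Delta_n\ge(\log n)^{1/2}$, which holds w.h.p.\ since $\mathbb P(Y^{(n)}\ge k)$ decays only geometrically, and then $\mu_4\le C_0\le\Delta_n^{1/2}$. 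Finally, the negative binomial moments are $\mathbb E Y=\alpha p/(1-p)$ and $\mathbb E Y(Y-1)=\alpha(\alpha+1)p^2/(1-p)^2$, which give $\mathbb E[Y(Y-2)]/\mathbb E Y=(\alpha+1)p/(1-p)-1$. With $p_n/(1-p_n)=(1-\varepsilon)/(\alpha+1)$ this equals $-\varepsilon$ exactly. Combined with \eqref{eq:good-mu123} this gives $Q(d_n)=-\varepsilon+o(\varepsilon)$. A similar computation with the third factorial moment at $\varepsilon=0$ gives $R\to(\alpha+2)/(\alpha+1)$, and the $O(\varepsilon)$ perturbations are absorbed in the $o(1)$ term.
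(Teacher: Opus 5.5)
Your proposal is correct in outline, but it gets membership in the good set by a different mechanism than the paper.

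The paper never transfers from the unconditioned i.i.d.\ law. For \eqref{eq:good-pi} and \eqref{eq:good-mu123} it uses the $O_p(n^{-1/2})$ bounds of Lemma~\ref{lem:jw-degree-law}, which already concern the conditioned degree sequence. It notes $n^{-1/2}=o(\varepsilon)$ and picks $M_n,\delta_n,\rho_n$ by diagonal selection. For \eqref{eq:good-exp} it uses the Janson--Warnke continuous-time embedding. On the likely event $\{\tau_{2m}\le t_n\}$ this gives the pathwise domination $d_v\le D_v(t_n)$, with $D_v(t_n)$ i.i.d.\ $\operatorname{NBin}(\alpha,2/(\alpha+2))$. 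So Chebyshev is applied to a genuinely independent sum, and no local limit estimate is needed.

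Your single transfer principle $\mathbb P(\cdot\mid S_n=2m)\le C\sqrt n\,\mathbb P(\cdot)$ is sound. Here $S_n\sim\operatorname{NBin}(n\alpha,p_n)$ has mean exactly $2m$, so the point bound can be read off from Stirling. It buys a self-contained argument with explicit parameters ($M_n=\log n$, $\delta_n=n^{-1/3}$). The price is extra work at precision $\rho_n\varepsilon$ for $j=2,3$.

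That ``main obstacle'' disappears if you simply use Lemma~\ref{lem:jw-degree-law} as the paper does. Even inside your framework, the crude $\sqrt n$ transfer succeeds if you replace Chebyshev by a fourth-moment Markov bound. The failure probability is then $O\bigl(\sqrt n/(n^2\rho_n^4\varepsilon^4)\bigr)=O\bigl((x_n^{4/3}n^{1/6}\rho_n^4)^{-1}\bigr)\to0$ for, e.g., $\rho_n=n^{-1/48}$. So the Edgeworth-type conditional covariance estimate, the least justified step of your sketch, is not needed. In the Bernstein fallback, the exponent is of order $n\min(t^2,t/b)$ rather than $nt^2$ once the truncation level $b$ grows; this is still ample.

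Two smaller remarks. First, the extra requirement $\Delta_n\ge(\log n)^{1/2}$ is redundant. Since $M_n\to\infty$ and $\mathbb P(Y=K)>0$ for every $K$, \eqref{eq:good-pi} already forces $\pi_K(d_n)>0$ for each fixed $K$ and all large $n$, uniformly over $\mathcal D_n$. Hence $\Delta(d_n)\to\infty$ and $\mu_4(d_n)\le C_0\le\Delta(d_n)^{1/2}$ eventually; this is the paper's argument. Your justification that the tail ``decays only geometrically'' is also wrong when $a=\infty$, where Poisson-type tails decay like $1/k!$, though the conclusion still holds at $k=(\log n)^{1/2}$. Second, deriving assumption (1) from pointwise convergence of mass functions (Scheff\'e) is a legitimate shortcut for the paper's tightness argument via $\mu_4$.
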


\begin{proof}
Write $d=d(G_{n,m}^{\alpha,\ast}).$ By Lemma~\ref{lem:jw-degree-law}, for every fixed integer $k\ge 0$ and every fixed integer $j\ge1$, we have
\[
\pi_k(d)=\mathbb P(Y^{(n)}=k)+O_p(n^{-1/2}),
\]
and $$\mu_j(d)=\mathbb E\bigl[(Y^{(n)})^j\bigr]+O_p(n^{-1/2}).$$
Since $x_n=\varepsilon^3 n\to\infty$, we have $\varepsilon\gg n^{-1/3}$ and hence $n^{-1/2}=o(\varepsilon).$ Therefore, for $j=1,2,3$,
\[
\mu_j(d)=\mathbb E\bigl[(Y^{(n)})^j\bigr]+o_p(\varepsilon).
\]
Also, for $j=4$, Lemma~\ref{lem:jw-degree-law} gives
\[
\mu_4(d)=\mathbb E\bigl[(Y^{(n)})^4\bigr]+O_p(n^{-1/2}).
\]

We claim that $\mathbb E\bigl[(Y^{(n)})^4\bigr]=O(1).$ Indeed, for every fixed integer $r\ge1$, the factorial moment formula for the negative binomial law gives
\begin{equation}
\label{eq:factorial-moment-Yn}
\mathbb E\bigl[(Y^{(n)})_r\bigr]
=
\alpha(\alpha+1)\cdots(\alpha+r-1)
\left(\frac{p_n}{1-p_n}\right)^r
=
\alpha(\alpha+1)\cdots(\alpha+r-1)
\left(\frac{1-\varepsilon}{\alpha+1}\right)^r .
\end{equation}
For each fixed $r$, \eqref{eq:factorial-moment-Yn} is $O(1)$ uniformly in $n$. Since ordinary moments are linear
combinations of factorial moments, it follows that
\[
\mathbb E\bigl[(Y^{(n)})^r\bigr]=O(1)
\qquad\text{for each fixed }r\ge1.
\]
In particular,
\[
\mu_4(d)=O_p(1).
\]

We next choose the sequences \(M_n\), \(\delta_n\), and \(\rho_n\) so that \eqref{eq:good-pi}--\eqref{eq:good-mu123} hold with high probability.
For every fixed $M\ge0$,
\[
\max_{0\le k\le M}\Bigl|\pi_k(d)-\mathbb P(Y^{(n)}=k)\Bigr|\xrightarrow{p}0.
\]
By a diagonal selection, there exist $M_n\to\infty$ and
$\delta_n\downarrow0$ such that
\[
\mathbb P\!\left(
\max_{0\le k\le M_n}\Bigl|\pi_k(d)-\mathbb P(Y^{(n)}=k)\Bigr|\le \delta_n
\right)=1-o(1).
\]
Likewise,
\[
\max_{1\le j\le 3}\frac{\bigl|\mu_j(d)-\mathbb E[(Y^{(n)})^j]\bigr|}{\varepsilon}\xrightarrow{p}0,
\]
so there exists a deterministic sequence $\rho_n\downarrow0$ such that
\[
\mathbb P\!\left(
\bigl|\mu_j(d)-\mathbb E[(Y^{(n)})^j]\bigr|\le \rho_n\varepsilon
\text{ for }j=1,2,3
\right)=1-o(1).
\]
After enlarging $C_0$ if necessary, we may also assume that
\[
\mathbb P\bigl(\mu_4(d)\le C_0\bigr)=1-o(1).
\]

It remains to prove \eqref{eq:good-exp}. We use the continuous time embedding from the proof of \cite{JansonWarnke2021Preferential}, Theorem~2.5: there exist independent pure birth processes
$(D_v(t))_{t\ge0}$, $v\in[n]$, and stopping times $(\tau_j)_{j\ge0}$ such that $d_v=D_v(\tau_{2m}).$ Set $t_n=\log\Bigl(1+\frac{2}{\alpha}\Bigr)$. Then the one dimensional marginals of the embedding satisfy
\[
D_v(t_n)\sim \operatorname{NBin}\!\left(\alpha,\frac{2}{\alpha+2}\right),
\]
independently over $v\in[n]$. Hence
\[
\mathbb E D_v(t_n)=2,
\qquad
\operatorname{Var}(D_v(t_n))=2+\frac{4}{\alpha}=O(1).
\]
Therefore
\[
\mathbb E\sum_{v=1}^n D_v(t_n)=2n,
\]
and $$\operatorname{Var}\!\left(\sum_{v=1}^n D_v(t_n)\right)=O(n).$$
On the other hand, $2m=\frac{\alpha}{\alpha+1}(1-\varepsilon)n\le n.$ Consequently,
\[
\mathbb E\!\left[\sum_{v=1}^n D_v(t_n)-2m\right]\ge n.
\]
Chebyshev's inequality yields
\[
\mathbb P(\tau_{2m}>t_n)
=
\mathbb P\!\left(\sum_{v=1}^n D_v(t_n)<2m\right)
=o(1).
\]

Choose $\beta>0$ so small that $\frac{2e^{4\beta}}{\alpha_*+2}<1.$ Then, for all sufficiently large $n$,
\[
\frac{2e^{4\beta}}{\alpha+2}\le \frac{2e^{4\beta}}{\alpha_*+2}<1,
\]
and hence the moment generating function of $D_v(t_n)$ is finite at $4\beta$. More precisely,
\begin{equation}\label{eq:mgf-Dv-tn}
\mathbb E e^{4\beta D_v(t_n)}=
\left(
\frac{\alpha/(\alpha+2)}{1-\frac{2e^{4\beta}}{\alpha+2}}
\right)^\alpha=
\left(
\frac{\alpha}{\alpha+2-2e^{4\beta}}
\right)^\alpha.
\end{equation}
Since $2e^{4\beta}<\alpha_*+2\le \alpha+2$, the base on the right hand side of \eqref{eq:mgf-Dv-tn} equals $\left(1-\frac{2e^{4\beta}-2}{\alpha}\right)^{-1}.$ Hence it is bounded uniformly in $n$, indeed,
\[
\left(1-\frac{2e^{4\beta}-2}{\alpha}\right)^{-\alpha}
\le
\sup_{x\ge \alpha_*}\left(1-\frac{2e^{4\beta}-2}{x}\right)^{-x}
<\infty.
\]
Thus
\[
\sup_n \mathbb E e^{4\beta D_v(t_n)}<\infty.
\]

For each $r\in\{1,2,3,4\}$ and $z\ge0$, there exists $C_r<\infty$ such that $z^{2r}e^{2\beta z}\le C_r e^{4\beta z}$. Hence
\[
\sup_n \mathbb E\bigl[D_v(t_n)^{2r} e^{2\beta D_v(t_n)}\bigr]<\infty. 
\]
Since the random variables $D_v(t_n)^r e^{\beta D_v(t_n)}$ are independent over $v$, it follows that
\[
\operatorname{Var}\!\left(\frac1n\sum_{v=1}^n D_v(t_n)^r e^{\beta D_v(t_n)}\right)=O(n^{-1}).
\]
By Chebyshev's inequality, after enlarging $C_0$ if necessary,
\[
\mathbb P\!\left(
\frac1n\sum_{v=1}^n D_v(t_n)^r e^{\beta D_v(t_n)}\le C_0
\right)=1-o(1).
\]
On the event $\{\tau_{2m}\le t_n\}$ we have $d_v=D_v(\tau_{2m})\le D_v(t_n)$ for every $v$, and hence
\[
\frac1n\sum_{v=1}^n d_v^r e^{\beta d_v}
\le
\frac1n\sum_{v=1}^n D_v(t_n)^r e^{\beta D_v(t_n)}.
\]
Together with $\mathbb P(\tau_{2m}>t_n)=o(1)$ this proves \eqref{eq:good-exp} with probability $1-o(1)$.

Let $\mathcal D_n$ be the set of all degree sequences satisfying \eqref{eq:good-pi}-\eqref{eq:good-exp}. Then
\[
\mathbb P(d\in\mathcal D_n)=1-o(1).
\]

Now fix a sequence $d_n\in\mathcal D_n$, and let $D_{d_n}$ denote the degree of a uniformly
chosen vertex of $d_n$. Thus
\[
\mathbb P(D_{d_n}=k)=\frac{1}{n}\#\{v : d_{n,v} = k\}=\pi_k(d_n).
\]
We now prove that $D_{d_n}\Rightarrow Y$. If $a<\infty$, then $\alpha\to a$ and $p_n\to 1/(a+2)$, so for every fixed $k\ge0$, the negative binomial masses converge pointwise,
\[
\mathbb P(Y^{(n)}=k)\to \mathbb P(Y=k).
\]
If $a=\infty$, then $p_n\to0$ and
\[
\alpha p_n=\frac{\alpha(1-\varepsilon)}{\alpha+2-\varepsilon}\to1.
\]
For every fixed $k\ge0$,
\[
\mathbb P(Y^{(n)}=k)
=
\frac1{k!}\prod_{j=0}^{k-1}(\alpha+j)p_n^k (1-p_n)^\alpha
=
\frac1{k!}\prod_{j=0}^{k-1}(\alpha p_n+jp_n)\,(1-p_n)^\alpha
\to \frac{e^{-1}}{k!}=\mathbb P(Y=k),
\]
so again $Y^{(n)}\Rightarrow Y$.

Let $A\ge1$ be fixed. For all large $n$ we have $A\le M_n$, and therefore
\[
\sum_{k=0}^A \bigl|\mathbb P(D_{d_n}=k)-\mathbb P(Y=k)\bigr|
\le
(A+1)\delta_n+\sum_{k=0}^A \bigl|\mathbb P(Y^{(n)}=k)-\mathbb P(Y=k)\bigr|
\to0.
\]
Also, by \eqref{eq:good-mu4},
\[
\mathbb P(D_{d_n}>A)\le \frac{\mu_4(d_n)}{A^4}\le \frac{C_0}{A^4},
\]
and similarly $\mathbb P(Y>A)\le C/A^4$ for some constant $C<\infty$ based on the fact that $Y$ has finite fourth moment.
Hence
\[
\sum_{k>A}\bigl|\mathbb P(D_{d_n}=k)-\mathbb P(Y=k)\bigr|
\le \mathbb P(D_{d_n}>A)+\mathbb P(Y>A)
\le \frac{C_0+C}{A^4}.
\]
Letting first $n\to\infty$ and then $A\to\infty$ proves $D_{d_n}\Rightarrow Y$.

Since $Y$ is either $\operatorname{NBin}(a,1/(a+2))$ with $a<\infty$ or $\operatorname{Poisson}(1)$, we have
\[
\mathbb P(Y=0)>0,
\qquad
\mathbb P(Y=1)>0.
\]
Fix an integer $K\ge0$. Because $\mathbb P(Y^{(n)}=K)\to \mathbb P(Y=K)>0$, property
\eqref{eq:good-pi} implies that for all large $n$, $\pi_K(d_n)>0.$ Taking $K=0$ and $K=1$ gives assumptions \textup{(3)} and \textup{(4)} of Proposition~\ref{prop:cp-upper}. As \(K\) was arbitrary, it follows that $\Delta(d_n)\to\infty.$ 

On the other hand, \eqref{eq:good-exp} with $r=1$ gives
\[
\Delta(d_n)e^{\beta\Delta(d_n)}
\le \sum_{v=1}^n d_{n,v} e^{\beta d_{n,v}}
\le C_0 n,
\]
so $\Delta(d_n)=O(\log n).$ Consequently, for all sufficiently large $n$, $\Delta(d_n)\le n^{1/6},$ and
\[
\mu_4(d_n)\le C_0\le \Delta(d_n)^{1/2}.
\]
Thus every $d_n\in\mathcal D_n$ satisfies assumptions \textup{(1)}--\textup{(5)} of Proposition~\ref{prop:cp-upper}.

Finally we compute $Q(d_n)$ and $R(d_n)$. Let $(Y^{(n)})^\star$ be the size-biased version of $Y^{(n)}$.
A direct calculation shows that $(Y^{(n)})^\star-1\sim \operatorname{NBin}(\alpha+1,p_n).$ Hence
\[
\mathbb E\bigl[(Y^{(n)})^\star-2\bigr]
=
\frac{(\alpha+1)p_n}{1-p_n}-1
=
1-\varepsilon-1
=
-\varepsilon,
\]
and
\[
\mathbb E\bigl[((Y^{(n)})^\star-2)^2\bigr]
=
\frac{(\alpha+1)p_n}{(1-p_n)^2}+\varepsilon^2
=
\frac{(1-\varepsilon)(\alpha+2-\varepsilon)}{\alpha+1}+\varepsilon^2
=
\frac{\alpha+2}{\alpha+1}+O(\varepsilon).
\]
Using
\[
Q(d)=\frac{\mu_2(d)-2\mu_1(d)}{\mu_1(d)},\]
and\[
R(d)=\frac{\mu_3(d)-4\mu_2(d)+4\mu_1(d)}{\mu_1(d)},
\]
together with \eqref{eq:good-mu123} implies
\[
\mu_j(d_n)=\mathbb E[(Y^{(n)})^j]+o(\varepsilon)\qquad (j=1,2,3),
\]
while
\[
\mu_1(d_n)=\frac{2m}{n}=\frac{\alpha}{\alpha+1}(1-\varepsilon)
\]
is bounded away from $0$, it follows that
\[
Q(d_n)=-\varepsilon+o(\varepsilon),
\]
\[
R(d_n)=\frac{\alpha+2}{\alpha+1}+o(1).
\]
\end{proof}

Using the negative drift and a second moment from Lemma~\ref{lem:good-degree-unified}, we show that
$T_n$ in Proposition~\ref{prop:cp-upper} determines the scale of the largest component in the subcritical regime.
The idea is to use the exponential moment bound to obtain Taylor expansions for
$\phi_n(\theta)$, then solve the saddle point equation.

\begin{lemma}\label{lem:saddle-unified}
For every $d_n\in\mathcal D_n$, let $\eta_n=D^\star_{d_n}-2,$ then $Q_n=\mathbb E\eta_n,$ $R_n=\mathbb E\eta_n^2,$ Let \(\phi_n=\phi_{d_n}\) and \(\theta_n\) be as in
Proposition \ref{prop:cp-upper}. Then
\[
\theta_n=\frac{\alpha+1}{\alpha+2}\varepsilon+o(\varepsilon),
\]
and
\[
\phi_n''(\theta_n)=\frac{\alpha+2}{\alpha+1}+o(1).
\]
For \(T_n\) defined in \eqref{equ-def-tn}, we have
\[
T_n=(1+o(1))\,C_\alpha\,\varepsilon^{-2}\log(\varepsilon^3 n),
\qquad
\frac{\theta_n\ell(d_n)}{T_n}\to\infty.
\]
\end{lemma}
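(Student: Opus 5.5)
The plan is a Taylor expansion of $\phi_n$ around $\theta=0$, controlled uniformly by the exponential moment bound \eqref{eq:good-exp}.

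\textbf{Step 1: uniform control of the size-biased variable.} Write $\phi_n(\theta)=\frac{1}{\mu_1(d_n)}\cdot\frac1n\sum_v d_{n,v}e^{\theta(d_{n,v}-2)}$. Since $\mu_1(d_n)$ is bounded away from $0$, \eqref{eq:good-exp} gives $\mathbb E[|\eta_n|^r e^{\beta|\eta_n|}]\le C$ for $r\le 3$, uniformly in $n$ and in $d_n\in\mathcal D_n$. The factor $e^{2\beta}$ coming from $\eta_n\ge -2$ is absorbed into $C$. Consequently, for $|\theta|\le\beta/2$,
\begin{align*}
\phi_n'(\theta)&=Q_n+\theta R_n+O(\theta^2),\\
\phi_n''(\theta)&=R_n+O(|\theta|),
\end{align*}
with $O(\cdot)$ uniform, using $|\phi_n'''(\theta)|\le \mathbb E|\eta_n|^3e^{\beta|\eta_n|/2}$. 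Lemma~\ref{lem:good-degree-unified} supplies $Q_n=-\varepsilon+o(\varepsilon)$ and $R_n=\frac{\alpha+2}{\alpha+1}+o(1)$, which is bounded away from $0$ since $R_n\ge 1+o(1)$.

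\textbf{Step 2: locating $\theta_n$.} For $\theta\in[0,\beta/2]$ we have $\phi_n''(\theta)\ge R_n-C\theta$, and this is positive on a fixed interval $[0,c]$. Hence $\phi_n'$ is strictly increasing there, and $\phi_n'(0)=Q_n<0$. Evaluating at $\theta_\pm=(1\pm\delta)\varepsilon/R_n$ gives $\phi_n'(\theta_\pm)=\pm\delta\varepsilon+o(\varepsilon)$. So there is a unique zero in $[0,c]$, and it lies in $[\theta_-,\theta_+]$ with $\theta_n=\varepsilon/R_n+o(\varepsilon)=\frac{\alpha+1}{\alpha+2}\varepsilon+o(\varepsilon)$. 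It is the smallest positive solution because $\phi_n'<0$ on $(0,\theta_n)$, and it lies in $(0,1)$. Substituting into Step 1 gives $\phi_n''(\theta_n)=R_n+O(\varepsilon)=\frac{\alpha+2}{\alpha+1}+o(1)$.

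\textbf{Step 3: asymptotics of $T_n$.} Expand to second order:
\[
\log\phi_n(\theta_n)=\theta_nQ_n+\tfrac12\theta_n^2R_n+O(\varepsilon^3)=-\frac{\varepsilon^2}{2R_n}(1+o(1)),
\]
so $-\log\phi_n(\theta_n)=(1+o(1))\frac{\alpha+1}{2(\alpha+2)}\varepsilon^2$, and $1/(-\log\phi_n(\theta_n))=(1+o(1))C_\alpha\varepsilon^{-2}$. The bracket in \eqref{equ-def-tn} is
\[
\log n+3\log\varepsilon+O(1)+\log\mathbb E(D_ne^{\theta_nD_n}).
\]
The last term is $O(1)$ by \eqref{eq:good-exp} with $r=1$ and $\theta_n\le\beta$, and it is also bounded below, since it tends to $\log\mu_1$. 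Thus the bracket equals $\log(\varepsilon^3n)+O(1)=(1+o(1))\log(\varepsilon^3n)$ because $x_n\to\infty$. This is the main point to get right: the $\frac32\log$ term is exactly what converts $\log n$ into $\log(\varepsilon^3n)$, and we must check that the constants $\log 2$ and $\log(\alpha+1)$ stay bounded, including in the case $a=\infty$. This yields $T_n=(1+o(1))C_\alpha\varepsilon^{-2}\log(\varepsilon^3n)$.

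\textbf{Step 4: the final ratio.} We have $\theta_n\ell(d_n)/T_n\asymp \varepsilon n\cdot\varepsilon^2/\log(\varepsilon^3n)=x_n/\log x_n\to\infty$, since $\ell(d_n)=2m\asymp n$.

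The only delicate issue I expect is the uniformity of the Taylor remainders in Step 1, which must hold over all of $\mathcal D_n$ and as $\alpha\to\infty$. This is handled by the fixed $\beta$ and constant $C_0$ from Lemma~\ref{lem:good-degree-unified}.
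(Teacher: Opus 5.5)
Your proposal is correct and follows essentially the same route as the paper: a uniform third-order Taylor expansion of $\phi_n$ on $[0,\beta/2]$ controlled by \eqref{eq:good-exp}, locating $\theta_n$ through the monotonicity of $\phi_n'$, then $-\log\phi_n(\theta_n)=(1+o(1))\varepsilon^2/(2R_n)$ and reading off $T_n$. Two small remarks. First, your explicit lower bound $\mathbb E(D_n e^{\theta_n D_n})\ge\mu_1(d_n)$ fills a point the paper only uses implicitly, since it states just the upper bound $C_0$. Second, the constant that must stay bounded when $a=\infty$ is $\log\frac{\alpha+1}{2(\alpha+2)}$, not $\log(\alpha+1)$ itself.
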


\begin{proof}
By Lemma~\ref{lem:good-degree-unified}, $Q_n=-\varepsilon+o(\varepsilon),$ and $R_n=\frac{\alpha+2}{\alpha+1}+o(1).$ Moreover, since $\mu_1(d_n)\ge \frac{\alpha_*}{2(\alpha_*+1)}$ for all large $n$ and
\eqref{eq:good-exp} holds, there exists a constant $C<\infty$ depending only on $\beta$ such that
\[
d_{n,v}|d_{n,v}-2|^3 e^{\beta|d_{n,v}-2|}
\le C\bigl(d_{n,v}^4 e^{\beta d_{n,v}}+d_{n,v}e^{2\beta}\bigr)
\qquad (v\in[n]).
\]
Hence
\[
\mathbb E\bigl[|\eta_n|^3 e^{\beta|\eta_n|}\bigr]
=
\frac{1}{\mu_1(d_n)}\cdot \frac1n \sum_{v=1}^n
d_{n,v}|d_{n,v}-2|^3 e^{\beta|d_{n,v}-2|}
=O(1).
\]
Therefore Taylor expansion with remainder gives, uniformly for $0\le \theta\le \beta/2$,
\begin{align}
\phi_n(\theta)&=1+Q_n\theta+\frac{R_n}{2}\theta^2+O(\theta^3),\label{eq:taylor-phi}\\
\phi_n'(\theta)&=Q_n+R_n\theta+O(\theta^2),\label{eq:taylor-phip}\\
\phi_n''(\theta)&=R_n+O(\theta).\label{eq:taylor-phipp}
\end{align}
Here the implied constants are uniform over $n$ and $\theta$.

For $\theta\in\mathbb R$, since $\phi_n''(\theta)=\mathbb E\bigl[\eta_n^2 e^{\theta\eta_n}\bigr]>0$, the derivative $\phi_n'$ is strictly increasing. For all large $n$, $Q_n<0$, and \eqref{eq:taylor-phip} yields
\[
\phi_n'\!\left(\frac{2|Q_n|}{R_n}\right)
=
Q_n+2|Q_n|+O(Q_n^2)
=
|Q_n|+O(Q_n^2)
>0
\]
for all large $n$. Therefore there exists a unique positive zero of $\phi_n'$, and it is the smallest positive
solution $\theta_n$. In particular,
\[
0<\theta_n\le \frac{2|Q_n|}{R_n}=O(\varepsilon),
\]
so $\theta_n\le \beta/2$ for all large $n$.

Substituting $\theta=\theta_n$ into \eqref{eq:taylor-phip} and using $\phi_n'(\theta_n)=0$, we obtain
\[
0=Q_n+R_n\theta_n+O(\theta_n^2)
=Q_n+R_n\theta_n+O(Q_n^2).
\]
Hence
\[
\theta_n=-\frac{Q_n}{R_n}+O(Q_n^2)
=
\frac{\alpha+1}{\alpha+2}\varepsilon+o(\varepsilon).
\]

Next, \eqref{eq:taylor-phi} and $\theta_n=O(\varepsilon)$ imply
\[
\phi_n(\theta_n)
=
1+Q_n\theta_n+\frac{R_n}{2}\theta_n^2+O(\varepsilon^3)
=
1-\frac{Q_n^2}{2R_n}+O(\varepsilon^3).
\]
Since $Q_n^2/R_n=O(\varepsilon^2)$, taking logarithms yields
\[
\log \phi_n(\theta_n)
=
-\frac{Q_n^2}{2R_n}+O(\varepsilon^3),
\]
and therefore
\[
-\log \phi_n(\theta_n)
=
\frac{Q_n^2}{2R_n}+O(\varepsilon^3)
=
\frac{\alpha+1}{2(\alpha+2)}\varepsilon^2+o(\varepsilon^2).
\]
From \eqref{eq:taylor-phipp} we also obtain
\[
\phi_n''(\theta_n)=R_n+O(\theta_n)=\frac{\alpha+2}{\alpha+1}+o(1).
\]

Furthermore, \eqref{eq:good-exp} with $r=1$ and $\theta_n\le \beta/2$ imply
\begin{equation}\label{eq:Ddn-exponential-moment-bound}
\begin{aligned}
\mathbb E\bigl[D_{d_n}e^{\theta_n D_{d_n}}\bigr]\le
\mathbb E\bigl[D_{d_n}e^{\beta D_{d_n}/2}\bigr] \le
\mathbb E\bigl[D_{d_n}e^{\beta D_{d_n}}\bigr] =
\frac{1}{n}\sum_{v=1}^n d_{n,v} e^{\beta d_{n,v}} \le C_0.
\end{aligned}
\end{equation}

Using \eqref{eq:Ddn-exponential-moment-bound} in the definition of $T_n$ in Proposition~\ref{prop:cp-upper}, we obtain
\[
T_n
=
\frac{1}{-\log \phi_n(\theta_n)}
\left[
\log n+\frac32\log\!\bigl(-\log\phi_n(\theta_n)\bigr)
-\frac12\log \phi_n''(\theta_n)
+\log \mathbb E\bigl[D_{d_n}e^{\theta_n D_{d_n}}\bigr]
\right].
\]
Now
\[
-\log \phi_n(\theta_n)
=
\frac{\alpha+1}{2(\alpha+2)}\varepsilon^2(1+o(1)),
\]
so
\[
\log\!\bigl(-\log \phi_n(\theta_n)\bigr)
=
2\log\varepsilon+O(1).
\]

Also note that
\[
\frac{\alpha+1}{2(\alpha+2)}\in \Bigl[\frac{\alpha_*+1}{2(\alpha_*+2)},\,\frac12\Bigr].
\]
Therefore
\[
T_n
=
C_\alpha \varepsilon^{-2}(1+o(1))\bigl(\log n+3\log\varepsilon+O(1)\bigr).
\]
Since $\log(\varepsilon^3 n)=\log n+3\log\varepsilon\to\infty,$ it follows that
\[
T_n=(1+o(1))\,C_\alpha\,\varepsilon^{-2}\log(\varepsilon^3 n).
\]

Finally,
\[
\ell(d_n)=2m=\frac{\alpha}{\alpha+1}(1-\varepsilon)n\asymp n,
\qquad
\theta_n\asymp \varepsilon,
\]
Moreover, $C_\alpha\in[2,4]$. Hence
\[
\frac{\theta_n\ell(d_n)}{T_n}
\asymp
\frac{\varepsilon n}{\varepsilon^{-2}\log(\varepsilon^3 n)}
=
\frac{\varepsilon^3 n}{\log(\varepsilon^3 n)}
=
\frac{x_n}{\log x_n}\to\infty.
\]
\end{proof}

\begin{theorem}[Upper bound]\label{thm:upper-unified}
Fix $\delta>0$, and set $c_n=C_\alpha \varepsilon^{-2}\log(\varepsilon^3 n).$ Then
\[
\mathbb P\bigl(L_1(G_{n,m}^{\alpha})>(1+\delta)c_n\bigr)\to0.
\]
%
\end{theorem}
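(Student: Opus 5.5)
The plan is to prove the bound first for the multigraph $G_{n,m}^{\alpha,\ast}$ and then move it to the simple graph with Lemma~\ref{lem:jw-transfer}. Let $\mathcal H_n$ be the family of multigraphs on $[n]$ with $m$ edges and $L_1>(1+\delta)c_n$. We decompose according to the degree sequence:
\[
\mathbb P\bigl(G_{n,m}^{\alpha,\ast}\in\mathcal H_n\bigr)\le \mathbb P\bigl(d(G_{n,m}^{\alpha,\ast})\notin\mathcal D_n\bigr)+\sup_{d_n\in\mathcal D_n}\mathbb P\bigl(\mathrm{CM}(d_n)\in\mathcal H_n\bigr).
\]
Lemma~\ref{lem:jw-cm} identifies the conditional law given $d(G_{n,m}^{\alpha,\ast})=d_n$ with $\mathrm{CM}(d_n)$, and this is what justifies the decomposition. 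The first term is $o(1)$ by Lemma~\ref{lem:good-degree-unified}.

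For the supremum, we argue by contradiction with a subsequence. If the supremum does not tend to $0$, choose $d_n\in\mathcal D_n$ along a subsequence with $\mathbb P(L_1(\mathrm{CM}(d_n))>(1+\delta)c_n)\ge\eta>0$. Lemma~\ref{lem:good-degree-unified} shows that this sequence satisfies (1)--(5) of Proposition~\ref{prop:cp-upper}. Lemma~\ref{lem:saddle-unified} then provides the smallest positive root $\theta_n\in(0,1)$ and $\theta_n\ell_n/T_n\to\infty$, together with $T_n=(1+o(1))c_n$.

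\emph{Applying the proposition.} Take $\delta'>0$ with $(1+\delta')(1+o(1))\le 1+\delta$ eventually, for instance $\delta'=\delta/3$. Proposition~\ref{prop:cp-upper} gives $\mathbb P(L_1(\mathrm{CM}(d_n))\le(1+\delta')T_n)\to1$, and since $(1+\delta')T_n\le(1+\delta)c_n$ eventually, this contradicts the choice of $d_n$. The proposition is stated for full sequences, but applying it along a subsequence is harmless: one can extend the subsequence to a full sequence in $\mathcal D_n$. The $o(1)$ terms in Lemma~\ref{lem:saddle-unified} must be uniform over $\mathcal D_n$. I expect this uniformity to be the main point needing care, and it holds because all the error bounds there come from the constants $C_0,\beta,\rho_n,\delta_n$ defining $\mathcal D_n$. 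The contradiction argument avoids needing any uniformity inside Coulson--Perarnau itself.

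\emph{Transfer to the simple graph.} Let $\mathcal G_n$ be the simple graphs in $\mathcal H_n$. Since $m\le n/2$ and $\alpha\ge\alpha_*$, Lemma~\ref{lem:jw-transfer} with $C=1$ and $\alpha_0=\alpha_*$ gives
\[
\mathbb P\bigl(L_1(G_{n,m}^{\alpha})>(1+\delta)c_n\bigr)\le B\,\mathbb P\bigl(G_{n,m}^{\alpha,\ast}\in\mathcal H_n\bigr)+o(1)=o(1),
\]
which proves the theorem.
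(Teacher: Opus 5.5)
Your proposal is correct and follows essentially the same argument as the paper. You reduce to the multigraph via Lemma~\ref{lem:jw-cm} and the high-probability event $d(G_{n,m}^{\alpha,\ast})\in\mathcal D_n$, argue by contradiction along a subsequence using a deterministic $d_n\in\mathcal D_n$ to which Proposition~\ref{prop:cp-upper} and Lemma~\ref{lem:saddle-unified} apply, and transfer with Lemma~\ref{lem:jw-transfer} using $C=1$, $\alpha_0=\alpha_*$; the paper uses tolerance $\delta/2$ where you use $\delta/3$. One small remark: the uniformity over $\mathcal D_n$ that you flag as the main point is not actually needed in this contradiction scheme, because Lemma~\ref{lem:saddle-unified} is applied only to the single chosen sequence $(d_n)$, and its per-sequence conclusions $T_n=(1+o(1))c_n$ and $\theta_n\ell(d_n)/T_n\to\infty$ already suffice.
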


\begin{proof}
We first prove the multigraph bound. Suppose, for contradiction, that $\mathbb P\bigl(L_1(G_{n,m}^{\alpha,\ast})>(1+\delta)c_n\bigr)\not\to0.$ Then there exist a subsequence, which we do not relabel, and a constant $\eta>0$ such that
\[
\mathbb P\bigl(L_1(G_{n,m}^{\alpha,\ast})>(1+\delta)c_n\bigr)\ge 2\eta
\]
for all $n$ in the subsequence. Since $\mathbb P\bigl(d(G_{n,m}^{\alpha,\ast})\in\mathcal D_n\bigr)=1-o(1),$ it follows that, for all sufficiently large $n$ in the subsequence,
\[
\mathbb P\bigl(L_1(G_{n,m}^{\alpha,\ast})>(1+\delta)c_n,\ d(G_{n,m}^{\alpha,\ast})\in\mathcal D_n\bigr)\ge \eta.
\]
Hence for each such $n$ there exists a deterministic degree sequence $d_n\in\mathcal D_n$ such that
\[
\mathbb P\bigl(L_1(G_{n,m}^{\alpha,\ast})>(1+\delta)c_n \,\big|\, d(G_{n,m}^{\alpha,\ast})=d_n\bigr)\ge \eta.
\]
By Lemma~\ref{lem:jw-cm}, given $d(G_{n,m}^{\alpha,\ast})=d_n$, the multigraph $G_{n,m}^{\alpha,\ast}$
has the same law as $\mathrm{CM}(d_n)$. Therefore
\[
\mathbb P\bigl(L_1(\mathrm{CM}(d_n))>(1+\delta)c_n\bigr)\ge \eta.
\]
On the other hand, every $d_n\in\mathcal D_n$ satisfies the hypotheses of Proposition~\ref{prop:cp-upper} by Lemma~\ref{lem:good-degree-unified},
and Lemma~\ref{lem:saddle-unified} gives $T_n=(1+o(1))c_n$, and $\frac{\theta_n\ell(d_n)}{T_n}\to\infty.$ Applying Proposition~\ref{prop:cp-upper} with tolerance $\delta/2$, we obtain
\[
\mathbb P\bigl(L_1(\mathrm{CM}(d_n))>(1+\delta/2)T_n\bigr)=o(1).
\]
Since $T_n/c_n\to1$, we have
\[
T_n\le \frac{1+\delta}{1+\delta/2}\,c_n
\]
for all large $n$, and therefore
\[
(1+\delta/2)T_n\le (1+\delta)c_n.
\]
It follows that
\[
\mathbb P\bigl(L_1(\mathrm{CM}(d_n))>(1+\delta)c_n\bigr)=o(1),
\]
which contradicts the lower bound $\mathbb P(L_1(\mathrm{CM}(d_n))>(1+\delta)c_n)\ge \eta$. Therefore,
\[
\mathbb P\bigl(L_1(G_{n,m}^{\alpha,\ast})>(1+\delta)c_n\bigr)\to0.
\]

It remains to transfer the estimate from the multigraph process to the simplegraph process. Let $\mathcal G_n^+$ be the family of simple graphs on $[n]$
with exactly $m$ edges and largest component larger than $b_n$, that is,
\[
\mathcal G_n^+:=\{G:\ G\text{ is simple on }[n],\ e(G)=m,\ L_1(G)>b_n\}.
\]
Then
\[
\{L_1(G_{n,m}^{\alpha})>(1+\delta)c_n\}=\{G_{n,m}^{\alpha}\in \mathcal G_n^+\}.
\]
Since $m=\frac{\alpha}{2(\alpha+1)}(1-\varepsilon)n\le \frac n2$ and $\alpha\ge \alpha_*$ for all large $n$, Lemma~\ref{lem:jw-transfer} applies with $C=1$ and $\alpha_0=\alpha_*$. Hence there exists $B_0=B_0(1,\alpha_*)<\infty$ such that
\[
\mathbb P(G_{n,m}^{\alpha}\in \mathcal G_n^+)
\le
B_0\,\mathbb P(G_{n,m}^{\alpha,\ast}\in \mathcal G_n^+)+o(1).
\]
Since every graph in \(\mathcal G_n^+\) is simple,
\[
\mathbb P(G_{n,m}^{\alpha,\ast}\in \mathcal G_n^+)
\le
\mathbb P\bigl(L_1(G_{n,m}^{\alpha,\ast})>(1+\delta)c_n\bigr)\to0.
\]
Therefore
\[
\mathbb P\bigl(L_1(G_{n,m}^{\alpha})>(1+\delta)c_n\bigr)
=
\mathbb P(G_{n,m}^{\alpha}\in \mathcal G_n^+)
\to0.
\]
\end{proof}

\subsection{First moment estimate and regeneration}

Recall $Y^{(n),\star}$ denote the size biased version of $Y^{(n)}$, that is,
\[
\mathbb P\bigl(Y^{(n),\star}=r\bigr)=\frac{r\,\mathbb P(Y^{(n)}=r)}{\mu_n},\qquad r\ge 1.
\]
Let $(Y_i)_{i\ge1}$ be i.i.d. copies of $Y^{(n)}$, let $(Y_i^\star)_{i\ge1}$ be i.i.d. copies of $Y^{(n),\star}$, 
and set $S_r=\sum_{i=1}^r Y_i.$ For $2\le k\le m+1$, define
\[
a_{n,k}=\frac{\mu_n}{k-1}\,\mathbb P\!\left(\sum_{i=1}^k Y_i^\star=2k-2\right)
\]
and
\[
\Xi_{n,k}=
\left[
\prod_{j=1}^{k-1}\frac{(n-j)\mu_n}{\ell-(2j-1)}
\right]
\frac{\mathbb P(S_{n-k}=\ell-2k+2)}{\mathbb P(S_n=\ell)}.
\]

\begin{proposition}\label{prop:tree-first-moment} Recall that $C_k^{(n,m)}$ denotes the number of tree components of $G_{n,m}^{\alpha,*}$ having $k$ vertices.
Then
\[
\mathbb E C_k^{(n,m)}=\frac{n}{k}\,a_{n,k}\,\Xi_{n,k}.
\]
Moreover,
\[
a_{n,k}=
\frac{\alpha}{(\alpha+2)k-2}
\binom{(\alpha+2)k-2}{k-1}
 p_n^{k-1}(1-p_n)^{(\alpha+1)k-1}.
\]
\end{proposition}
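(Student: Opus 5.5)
Condition on the degree sequence and compute the expected count in the configuration model, then average over the law of $d(G_{n,m}^{\alpha,\ast})$ from Lemma~\ref{lem:jw-degree-law}. By Lemma~\ref{lem:jw-cm} and exchangeability,
\[
\mathbb E C_k^{(n,m)}=\binom{n}{k}\,\mathbb P\bigl([k]\text{ spans a tree component}\bigr).
\]
Fix a tree $T$ on $[k]$ with degrees $t_1,\dots,t_k$, so that $\sum t_i=2k-2$. In $\mathrm{CM}(d)$, the set $[k]$ spans exactly $T$ as a component iff $d_i=t_i$ for $i\le k$ and the $k-1$ prescribed edges are realized among those half-edges. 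Pairing half-edges one edge at a time, this has probability
\[
\prod_{i\le k} t_i!\;\prod_{j=1}^{k-1}\frac{1}{\ell-(2j-1)}.
\]
Here the $t_i!$ counts the ways to assign half-edges at vertex $i$ to its tree edges, and the pairings are sequential uniform choices. This factor is deterministic given $d$ on $[k]$. The remaining vertices form $\mathrm{CM}$ on their own degrees, which sum to $\ell-2k+2$.

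Next, average over $d$ using the conditioned i.i.d.\ representation. The point mass $\mathbb P(Y_i=t_i,\ i\le k\mid S_n=\ell)$ equals
\[
\prod_{i\le k}\mathbb P(Y=t_i)\cdot\frac{\mathbb P(S_{n-k}=\ell-2k+2)}{\mathbb P(S_n=\ell)}.
\]
Multiplying by $\prod t_i!$ and writing $t_i\,\mathbb P(Y=t_i)=\mu_n\mathbb P(Y^\star=t_i)$ gives
\[
\prod_i t_i!\,\mathbb P(Y=t_i)=\mu_n^k\prod_i (t_i-1)!\,\mathbb P(Y^\star=t_i).
\]
Sum over trees using Cayley's degree formula: the number of trees on $[k]$ with degrees $t_i$ is $(k-2)!/\prod(t_i-1)!$. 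The factorials cancel, and the sum over degree vectors becomes
\[
(k-2)!\,\mu_n^k\,\mathbb P\Bigl(\sum_{i=1}^k Y_i^\star=2k-2\Bigr).
\]
Combining $\binom nk(k-2)!\mu_n^k\prod_{j}(\ell-2j+1)^{-1}$, write $\binom nk(k-2)!=\frac{n}{k}\cdot\frac{1}{k-1}\prod_{j=1}^{k-1}(n-j)$. This yields exactly
\[
\frac nk\cdot\frac{\mu_n}{k-1}\,\mathbb P\Bigl(\sum Y^\star_i=2k-2\Bigr)\prod_{j=1}^{k-1}\frac{(n-j)\mu_n}{\ell-(2j-1)}\cdot\frac{\mathbb P(S_{n-k}=\cdots)}{\mathbb P(S_n=\ell)},
\]
which is $\frac nk a_{n,k}\Xi_{n,k}$. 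The main bookkeeping risk is the pairing count (ordered versus unordered, and the loop/multi-edge conventions). To guard against it, I would verify the case $k=2$ directly: a single edge between two degree-one vertices has probability $1/(\ell-1)$.

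For the closed form, use the identity from Lemma~\ref{lem:good-degree-unified} that $Y^\star-1\sim\mathrm{NBin}(\alpha+1,p_n)$. Then $\sum_{i\le k}(Y_i^\star-1)\sim\mathrm{NBin}(k(\alpha+1),p_n)$, and we need its mass at $k-2$:
\[
\binom{(\alpha+1)k+k-3}{k-2}p_n^{k-2}(1-p_n)^{(\alpha+1)k}.
\]
Finally, substitute $\mu_n=\alpha p_n/(1-p_n)$, which follows from $p_n=\ell/(\alpha n+\ell)$. This gives
\[
a_{n,k}=\frac{\alpha}{k-1}\binom{(\alpha+2)k-3}{k-2}p_n^{k-1}(1-p_n)^{(\alpha+1)k-1}.
\]
The Gamma identity $\frac1{k-1}\binom{N-1}{k-2}=\frac1N\binom{N}{k-1}$ with $N=(\alpha+2)k-2$ converts this to the stated form.
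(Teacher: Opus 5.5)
Your proof is correct and follows essentially the same route as the paper. You condition on the degree sequence, count tree-realizing half-edge pairings via the degree-specified Cayley (Pr\"ufer) formula, average using the conditioned i.i.d.\ negative binomial representation together with the size-bias identity, and evaluate $\mathbb P(\sum_{i\le k} Y_i^\star=2k-2)$ via $Y^\star-1\sim\operatorname{NBin}(\alpha+1,p_n)$. The only cosmetic difference is the order of summation: you sum over individual trees and apply the degree formula after size-biasing, whereas the paper first collapses the count for a fixed degree vector to $(k-2)!\prod_{v\in A}d_v$ and then averages.
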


\begin{proof}
Fix $d=(d_v)_{v\in[n]}$ with total degree $\ell$, and let $\mathbb P_d$ and
$\mathbb E_d$ denote probability and expectation under $\mathrm{CM}(d)$.

Fix a set $A\subseteq[n]$ with $|A|=k$. We first compute the probability that $A$ is a tree component of
$\mathrm{CM}(d)$. If $\sum_{v\in A} d_v\ne 2k-2,$ then $A$ cannot be a tree component, so the probability is $0$. 

Assume now that $\sum_{v\in A} d_v=2k-2.$ By Pr{\"u}fer formula, the number of labelled trees on the vertex set $A$ with vertex degrees $(d_v)_{v\in A}$ equals $\frac{(k-2)!}{\prod_{v\in A}(d_v-1)!}.$ Fix such a tree $T$. At each vertex $v\in A$, the $d_v$ edges of $T$ incident to $v$ can be assigned to the $d_v$
half-edges at $v$ in exactly $d_v!$ ways. Thus, over all vertices, there are $\prod_{v\in A} d_v!$ possible
half-edge assignments. 

Summing over all trees on $A$ with degree sequence $(d_v)_{v\in A}$, the total number of
internal pairings on the half-edges of $A$ that produce a tree on $A$ is
\[
\frac{(k-2)!}{\prod_{v\in A}(d_v-1)!}\prod_{v\in A} d_v!
=(k-2)!\prod_{v\in A} d_v.
\]
Once the half-edges incident to $A$ have been paired internally, the remaining $\ell-2k+2$ outside half-edges may be paired arbitrarily. Hence the number of favourable global pairings is
\[
(k-2)!\prod_{v\in A} d_v\cdot (\ell-2k+1)!!.
\]
The total number of pairings of all $\ell$ half-edges is $(\ell-1)!!$, so
\[
\mathbb P_d(A\text{ is a tree component})
=
\frac{\#\{\text{favourable pairings}\}}{\#\{\text{all pairings}\}}=
\mathbf 1_{\{\sum_{v\in A} d_v=2k-2\}}
(k-2)!\prod_{v\in A} d_v\,
\frac{(\ell-2k+1)!!}{(\ell-1)!!}.
\]
Summing over all $k$-subsets $A\subseteq[n]$ gives
\[
\mathbb E_d C_k^{(n,m)}
=
\sum_{\substack{A\subseteq[n]\\ |A|=k}}
\mathbf 1_{\{\sum_{v\in A} d_v=2k-2\}}
(k-2)!\prod_{v\in A} d_v\,
\frac{(\ell-2k+1)!!}{(\ell-1)!!}.
\]
Now return to the random multigraph $G_{n,m}^{\alpha,\ast}$. By Lemma~\ref{lem:jw-degree-law}, its degree sequence has the same law as
\begin{equation}
\label{eq:degree-conditioning}
(D_1,\dots,D_n)\overset{d}= (Y_1,\dots,Y_n)\,\big|\,\{S_n=\ell\}.
\end{equation}

where $Y_1,\dots,Y_n$ are i.i.d. copies of $Y^{(n)}$. By exchangeability, every $k$-subset contributes equally in expectation, so
\[
\mathbb E C_k^{(n,m)}
=
\binom{n}{k}(k-2)!\frac{(\ell-2k+1)!!}{(\ell-1)!!}
\mathbb E\!\left[\prod_{i=1}^k D_i\,\mathbf 1_{\{\sum_{i=1}^k D_i=2k-2\}}\right].
\]
Using \eqref{eq:degree-conditioning}, we have
\[
\mathbb E\!\left[\prod_{i=1}^k D_i\,\mathbf 1_{\{\sum_{i=1}^k D_i=2k-2\}}\right]
=
\frac{\mathbb E\!\left[\prod_{i=1}^k Y_i\,\mathbf 1_{\{\sum_{i=1}^k Y_i=2k-2\}}\mathbf 1_{\{S_n=\ell\}}\right]}
{\mathbb P(S_n=\ell)}.
\]
On the event $\{\sum_{i=1}^k Y_i=2k-2\}$, the condition $S_n=\ell$ is equivalent to
$S_{n-k}=\ell-2k+2$. Since $Y_1,\dots,Y_k$ are independent of $Y_{k+1},\dots,Y_n$, we obtain
\[
\mathbb E\!\left[\prod_{i=1}^k Y_i\,\mathbf 1_{\{\sum_{i=1}^k Y_i=2k-2\}}\mathbf 1_{\{S_n=\ell\}}\right]
=
\mathbb E\!\left[\prod_{i=1}^k Y_i\,\mathbf 1_{\{\sum_{i=1}^k Y_i=2k-2\}}\right]
\mathbb P(S_{n-k}=\ell-2k+2).
\]
Using the size bias identity $\mathbb E\bigl[Y_i f(Y_i)\bigr]=\mu_n\,\mathbb E\bigl[f(Y_i^\star)\bigr]$ independently for $i=1,\dots,k$, we find
\[
\mathbb E\!\left[\prod_{i=1}^k Y_i\,\mathbf 1_{\{\sum_{i=1}^k Y_i=2k-2\}}\right]
=
\mu_n^k\,\mathbb P\!\left(\sum_{i=1}^k Y_i^\star=2k-2\right).
\]
Therefore
\[
\mathbb E C_k^{(n,m)}
=
\binom{n}{k}(k-2)!\frac{(\ell-2k+1)!!}{(\ell-1)!!}
\mu_n^k\,\mathbb P\!\left(\sum_{i=1}^k Y_i^\star=2k-2\right)
\frac{\mathbb P(S_{n-k}=\ell-2k+2)}{\mathbb P(S_n=\ell)}.
\]
Using
\begin{equation}
\label{eq:binom-factorial-product}
\binom{n}{k}(k-2)!
=
\frac{n}{k(k-1)}
\prod_{j=1}^{k-1}(n-j),
\end{equation}
and
\begin{equation}
\label{eq:double-factorial-ratio-product}
\frac{(\ell-2k+1)!!}{(\ell-1)!!}
=
\prod_{j=1}^{k-1}
\frac{1}{\ell-(2j-1)}.
\end{equation}
Substituting \eqref{eq:binom-factorial-product}, \eqref{eq:double-factorial-ratio-product}  and using the definitions of $a_{n,k}$ and $\Xi_{n,k}$ yields
\[
\mathbb E C_k^{(n,m)}=\frac{n}{k}\,a_{n,k}\,\Xi_{n,k}.
\]

It remains to compute $a_{n,k}$. Since $Y^{(n)}\sim\operatorname{NBin}(\alpha,p_n)$, the size biased law satisfies $Y^{(n),\star}-1\sim\operatorname{NBin}(\alpha+1,p_n).$ Hence
\[
\sum_{i=1}^k Y_i^\star-k\sim\operatorname{NBin}(k(\alpha+1),p_n).
\]
The event $\sum_{i=1}^k Y_i^\star=2k-2$ is equivalent to $\sum_{i=1}^k (Y_i^\star-1)=k-2$, so
\[
\mathbb P\!\left(\sum_{i=1}^k Y_i^\star=2k-2\right)
=
\binom{(\alpha+2)k-3}{k-2}(1-p_n)^{k(\alpha+1)}p_n^{k-2}.
\]
Since $\mu_n=\frac{\alpha p_n}{1-p_n},$ we obtain
\[
a_{n,k}
=
\frac{\alpha p_n}{(1-p_n)(k-1)}
\binom{(\alpha+2)k-3}{k-2}(1-p_n)^{k(\alpha+1)}p_n^{k-2}.
\]
Now consider $\binom{N-1}{K-1}=\frac{K}{N}\binom{N}{K}$ with $N=(\alpha+2)k-2$ and $K=k-1$, to obtain
\[
a_{n,k}
=
\frac{\alpha}{(\alpha+2)k-2}
\binom{(\alpha+2)k-2}{k-1}
p_n^{k-1}(1-p_n)^{(\alpha+1)k-1}.
\]
\end{proof}

The following result provides the conditional law of the graph
remaining after the removal of a prescribed tree component.


\begin{proposition}\label{prop:regeneration}
Fix $k\ge2$ and $A\subseteq[n]$ with $|A|=k$. Condition on the event that $A$ is a tree component of
$G_{n,m}^{\alpha,\ast}$. Then, after relabelling the remaining vertices $[n]\setminus A$ as $[n-k]$, the induced
multigraph on $[n]\setminus A$ has the distribution of $G_{n-k,m-k+1}^{\alpha,\ast}$.
\end{proposition}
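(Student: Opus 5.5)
The plan is to work entirely through the two-stage description of $G^{\alpha,\ast}_{n,m}$ given by Lemma~\ref{lem:jw-degree-law} and Lemma~\ref{lem:jw-cm}. In the first stage the degree sequence is drawn as i.i.d.\ $\operatorname{NBin}(\alpha,p_n)$ variables conditioned on summing to $\ell=2m$. In the second stage, given the degrees, the multigraph is $\mathrm{CM}(d)$, i.e.\ it is induced by a uniform pairing of the $\ell$ half-edges. I will show two things. First, conditionally on $A$ being a tree component, the degree sequence outside $A$ has the law of i.i.d.\ negative binomials conditioned on summing to $\ell-2k+2=2(m-k+1)$. Second, given the full degree sequence and the event that $A$ is a tree component, the outside multigraph is $\mathrm{CM}$ of the outside degrees. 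Combining these with Lemmas~\ref{lem:jw-degree-law} and \ref{lem:jw-cm} applied with parameters $(n-k,m-k+1)$ identifies the law as that of $G^{\alpha,\ast}_{n-k,m-k+1}$.

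\emph{Step 1 (conditional pairing).} Fix $d$ with $\sum_{v\in A}d_v=2k-2$. Exactly as in the proof of Proposition~\ref{prop:tree-first-moment}, the set of pairings under which $A$ is a tree component factorizes: it is the product of the set of internal tree-producing pairings of the half-edges at $A$ with the set of \emph{all} pairings of the $\ell-2k+2$ outside half-edges. Hence, under $\mathbb P_d(\,\cdot\mid A\text{ is a tree component})$, the outside pairing is uniform and independent of the inside pairing. The multigraph induced on $[n]\setminus A$ is therefore $\mathrm{CM}(d|_{[n]\setminus A})$. Relabelling to $[n-k]$ is harmless because the configuration model is equivariant under vertex relabelling.

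\emph{Step 2 (conditional degree law).} By Bayes' rule and the formula for $\mathbb P_d(A\text{ is a tree component})$ from the proof of Proposition~\ref{prop:tree-first-moment}, for an outside sequence $d'$ summing to $\ell-2k+2$ we have
\[
\mathbb P\bigl(d|_{A^c}=d'\,\big|\,A\text{ tree}\bigr)\propto \sum_{(d_v)_{v\in A}:\,\sum d_v=2k-2}\prod_{v\in A}\mathbb P(Y=d_v)\,d_v\;\prod_{w\notin A}\mathbb P(Y=d'_w)\;(k-2)!\frac{(\ell-2k+1)!!}{(\ell-1)!!}.
\]
The $A$-part and the double-factorial factor do not depend on $d'$, so this expression is proportional to $\prod_{w\notin A}\mathbb P(Y=d'_w)$ restricted to $\sum_w d'_w=\ell-2k+2$. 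That is, the outside degrees are i.i.d.\ $\operatorname{NBin}(\alpha,p_n)$ conditioned on their sum.

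\emph{Step 3 (change of $p$).} Lemma~\ref{lem:jw-degree-law} for $(n-k,m-k+1)$ uses the parameter $p'=2(m-k+1)/(\alpha(n-k)+2(m-k+1))$ rather than $p_n$. I expect this mismatch to be the main point needing care, although it is easily handled. On the event $\{\sum_w Y_w=s\}$, the product $\prod_w\binom{\alpha+y_w-1}{y_w}(1-p)^\alpha p^{y_w}$ equals $(1-p)^{\alpha(n-k)}p^{s}\prod_w\binom{\alpha+y_w-1}{y_w}$. The $p$-dependence is thus a constant that cancels after conditioning, so the conditioned law is the same for every $p\in(0,1)$. With Step 2 this shows that the outside degree sequence has exactly the degree law of $G^{\alpha,\ast}_{n-k,m-k+1}$. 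Step 1 then gives, via Lemma~\ref{lem:jw-cm}, the matching conditional $\mathrm{CM}$ law. Averaging over degree sequences concludes the proof.

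The degenerate case $m-k+1=0$ needs a separate remark: the outside graph is then empty, which agrees with $G^{\alpha,\ast}_{n-k,0}$. One should also confirm that the event has positive probability, which holds whenever $k\le m+1$, so that the conditioning is well defined.
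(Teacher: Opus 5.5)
Your proposal is correct and follows essentially the same route as the paper's proof. It has the same three steps: uniformity of the outside pairing given the degrees and the tree event; Bayes' rule, showing the outside degrees are i.i.d.\ $\operatorname{NBin}(\alpha,p_n)$ conditioned on summing to $2(m-k+1)$; and $p$-invariance of that conditioned law, so that Lemma~\ref{lem:jw-degree-law} and Lemma~\ref{lem:jw-cm} apply at $(n-k,m-k+1)$. Your remarks on the degenerate case $m=k-1$ and on positivity of the conditioning event go slightly beyond the paper (strictly, positivity also needs $k<n$ unless $m=k-1$), but they do not change the argument.
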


\begin{proof}
We first condition on the full degree sequence $d=(d_v)_{v\in[n]}$. By Lemma~\ref{lem:jw-cm}, 
the multigraph $G_{n,m}^{\alpha,\ast}$ has distribution $\mathrm{CM}(d)$. Recall that, for a fixed set $A$, on the event that $A$ is a tree component, all half-edges incident to vertices of $A$ are paired inside $A$, and the remaining half-edges are paired among the vertices of $[n]\setminus A$.
For a fixed outside pairing, the number of internal pairings on the half-edges of $A$ that produce a tree equals $(k-2)!\prod_{v\in A} d_v$ when $\sum_{v\in A} d_v=2k-2$, and equals $0$ otherwise. 

In particular, conditional on the degree sequence $d$ and on the event that $A$ is a tree component, every outside pairing occurs with the same probability. Therefore the induced multigraph on $[n]\setminus A$ is the configuration model with outside degrees $(d_v)_{v\notin A}$.

It remains to identify the conditional law of the outside degree vector. Recall that $p_n=\frac{2m}{\alpha n+2m},$ and let $Y_1,\dots,Y_n$ be i.i.d. $\operatorname{NBin}(\alpha,p_n)$ random variables. By Lemma~\ref{lem:jw-degree-law},
\[
d\bigl(G_{n,m}^{\alpha,\ast}\bigr)\overset{d}= (Y_1,\dots,Y_n)\,\big|\,\{Y_1+\cdots+Y_n=2m\}.
\]

For vector $e=(e_v)_{v\in[n]}$ with total sum $2m$. The computation in the proof of Proposition~\ref{prop:tree-first-moment} shows that
\begin{equation*}
\begin{aligned}
\mathbb{P}_e(A \text{ is a tree component})
&=
\mathbf{1}_{\{\sum_{v \in A} e_v = 2k-2\}}
(k-2)!
\prod_{v \in A} e_v
\frac{(2m-2k+1)!!}{(2m-1)!!} \\
&=
\kappa_{n,k}
\mathbf{1}_{\{\sum_{v \in A} e_v = 2k-2\}}
\prod_{v \in A} e_v .
\end{aligned}
\end{equation*}
where
\[
\kappa_{n,k}=(k-2)!\frac{(2m-2k+1)!!}{(2m-1)!!}.
\]
Now fix an outside vector $f=(f_v)_{v\notin A}$ with sum $2m-2k+2$. Summing over all inside vectors
$(e_v)_{v\in A}$ with $2k-2$, we obtain
\begin{equation}
\label{eq:conditional-degree-given-tree-component}
\begin{aligned}
&\mathbb{P}\bigl((D_v)_{v\notin A}=f \mid A\text{ is a tree component}\bigr) \\
&\qquad\propto
\left[\prod_{v\notin A}\mathbb{P}(Y_v=f_v)\right]
\sum_{\substack{(e_v)_{v\in A}\in\mathbb{N}_0^A\\
\sum_{v\in A} e_v=2k-2}}
\kappa_{n,k}
\Bigl(\prod_{v\in A} e_v\Bigr)
\left[\prod_{v\in A}\mathbb{P}(Y_v=e_v)\right].
\end{aligned}
\end{equation}
The sum over the inside coordinates in \eqref{eq:conditional-degree-given-tree-component} does not depend on $f$. Consequently, conditional on the total degree $2m$ and on the event that $A$ is a tree component, the outside degree vector is distributed as i.i.d.
$\operatorname{NBin}(\alpha,p_n)$ random variables conditioned on having total sum $2m-2k+2$.

Now let $Z_1,\dots,Z_{n-k}$ be i.i.d. $\operatorname{NBin}(\alpha,p)$ random variables with arbitrary $p\in(0,1)$.
For any vector $z=(z_1,\dots,z_{n-k})\in\mathbb N_0^{n-k}$, we have
\[
\mathbb P\!\left((Z_1,\dots,Z_{n-k})=z\,\middle|\,\sum_{i=1}^{n-k} Z_i=s\right)
=
\frac{\mathbf 1_{\{\sum_i z_i=s\}}\prod_{i=1}^{n-k}\binom{\alpha+z_i-1}{z_i}}
{\displaystyle\sum_{\substack{u\in\mathbb N_0^{n-k}\\ \sum_i u_i=s}}\prod_{i=1}^{n-k}\binom{\alpha+u_i-1}{u_i}}.
\]
Indeed, on the conditioning event the common factor $(1-p)^{\alpha(n-k)}p^s$ cancels. Thus the conditional law is independent of $p$.

Taking $p'=\frac{2(m-k+1)}{\alpha(n-k)+2(m-k+1)},$ Lemma~\ref{lem:jw-degree-law} implies that the outside degree vector has  the degree-sequence law of
$G_{n-k,m-k+1}^{\alpha,\ast}$. Together with the conditional configuration model description established above, we obtain
\[
G_{n,m}^{\alpha,*}[A^c] \mid \{A \text{ is a tree component}\}
\overset{d}= G_{n-k,m-k+1}^{\alpha,*}.
\]
\end{proof}

\subsection{Lower bound for Theorem~\ref{thm:main-fixed-alpha}}

\begin{lemma}\label{lem:xi-unified}
Fix $B<\infty$. Uniformly for integers $2\le k\le B\varepsilon^{-2}\log x_n,$ we have
\[
\Xi_{n,k}=1+o(1).
\]
\end{lemma}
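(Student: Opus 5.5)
The plan is to avoid any local limit theorem: the ratio $\mathbb P(S_{n-k}=\ell-2k+2)/\mathbb P(S_n=\ell)$ can be written in closed form, so $\log\Xi_{n,k}$ becomes an explicit combination of Gamma functions. Since $Y_i\sim\operatorname{NBin}(\alpha,p_n)$ are i.i.d., $S_r\sim\operatorname{NBin}(\alpha r,p_n)$. Hence
\[
\frac{\mathbb P(S_{n-k}=\ell-2k+2)}{\mathbb P(S_n=\ell)}
=\frac{\binom{\alpha(n-k)+\ell-2k+1}{\ell-2k+2}}{\binom{\alpha n+\ell-1}{\ell}}\,(1-p_n)^{-\alpha k}\,p_n^{-(2k-2)} .
\]
Together with $\prod_{j<k}(n-j)=\Gamma(n)/\Gamma(n-k+1)$ and $\prod_{j<k}(\ell-2j+1)$, written as a ratio of Gamma functions at half-integers, this gives $\log\Xi_{n,k}=F(k)$ for an explicit function $F$ built from $\log\Gamma$ terms, with $F(0)=0$ (interpolating continuously in $k$).

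I will Taylor expand $F$ in $k$ using Stirling's formula with error $\psi(z)=\log z+O(1/z)$ and $\psi'(z)=1/z+O(1/z^2)$. All Gamma arguments are of order $n$, and $k=o(n)$ in our range because $k/n\le B\log x_n/(\varepsilon^2 n)=B\varepsilon\log x_n/x_n\to0$.

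\emph{First order.} The linear coefficient of $F$ is
\[
-\log n+\log\ell\cdot(\text{from the double factorial})+\log\mu_n-(\alpha+2)\log N+2\log\ell+\alpha\log(\alpha n)-\alpha\log(1-p_n)-2\log p_n+O(1/n),
\]
where $N=\alpha n+\ell$. Since $p_n=\ell/N$, $1-p_n=\alpha n/N$ and $\mu_n=\ell/n$, the logarithms cancel exactly. So the linear term is $O(k/n)=o(1)$, and the $O(1)$ boundary pieces (the $-2$ in $2k-2$, the $k(k-1)$ prefactor) contribute $O(1/n)$.

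\emph{Second order.} The quadratic coefficient is $\tfrac12 F''$, with
\[
F''\approx -\frac1n+\frac{2}{\ell}+\frac{(\alpha+2)^2}{N}-\frac{4}{\ell}-\frac{\alpha}{n}.
\]
Substituting $\ell=\mu_n n$ and $N=(\alpha+\mu_n)n$ with $\mu_n=\frac{\alpha}{\alpha+1}(1-\varepsilon)$, I check that at $\varepsilon=0$ this equals $\frac{1}{\alpha n}\bigl[-\alpha-2(\alpha+1)-\alpha^2+(\alpha+2)(\alpha+1)\bigr]=0$. Hence $F''=O(\varepsilon/n)+O(1/n^2)$, uniformly in $\alpha\ge\alpha_*$; the large-$\alpha$ case needs the bookkeeping done in units of $1/(\alpha n)$, where every term stays bounded. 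The quadratic contribution is then
\[
O\bigl(k^2\varepsilon/n\bigr)=O\bigl(B^2\varepsilon^{-3}\log^2x_n/n\bigr)=O\bigl(\log^2 x_n/x_n\bigr)=o(1).
\]

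\emph{Third order.} The cubic remainder is $O(k^3/n^2)=O(B^3\log^3x_n/x_n^2)=o(1)$.

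The main obstacle is this exact cancellation of the $k^2/n$ term at $\varepsilon=0$. The naive estimate $k^2/n$ need not tend to zero (it is $\asymp\log^2x_n/(\varepsilon x_n)$), so the argument hinges on identifying the cancellation and controlling the $O(\varepsilon)$ residual uniformly in $\alpha$. I would organize this by writing each $\log\Gamma$ difference as an integral of $\psi$ along a segment and expanding $\psi$ to second order. The $O(1/z)$ corrections in $\psi$ contribute $O(k/n)$ in total, and the cancellation then reduces to the algebraic identity above.
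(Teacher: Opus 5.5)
Your route is essentially the paper's. Both use the exact Gamma-function form of $\mathbb P(S_{n-k}=\ell-2k+2)/\mathbb P(S_n=\ell)$ coming from $S_r\sim\operatorname{NBin}(\alpha r,p_n)$, then Stirling, then an expansion in $k$. In that expansion the linear term cancels exactly and the $k^2/n$ term cancels up to $O(\varepsilon k^2/n)$. The paper handles the product $U_{n,k}$ by elementary $\log(1-u)$ expansions and packages the Gamma ratios through $h$ and $\varphi$, but that difference is cosmetic. Your second-order identity at $\varepsilon=0$ is correct. In your displayed linear coefficient the signs of $\log n$ and $\log\ell$ are reversed: the product contributes $\log n+\log\mu_n-\log\ell$ per unit of $k$. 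Your conclusion of exact cancellation is nonetheless right.

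The step that does not go through as written is the third-order one. The lemma is needed uniformly for arbitrary $\alpha=\alpha(n)\to\infty$; this is the regime of Corollary~\ref{cor:subcritical-jw}, which puts no growth restriction on $\alpha$. Two of the Gamma arguments move by $\alpha$ and $\alpha+2$ per unit of $k$.

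\begin{itemize}
\item If you expand each $\log\Gamma$ difference separately, as you propose, the cubic remainder of $\log\Gamma(\alpha n)-\log\Gamma(\alpha(n-k))$ alone is of order $(\alpha k)^3/(\alpha n)^2=\alpha k^3/n^2$.
\item At the top of the range this is $\asymp\alpha\log^3x_n/x_n^2$, which diverges, for example, for $\alpha=n$ and $\varepsilon=n^{-1/4}$.
\item So your bound $O(k^3/n^2)$ is not justified. No fixed-order term-by-term expansion is uniform over all such $\alpha$.
\end{itemize}

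What you need is the same kind of cancellation you found at second order, now applied to the remainder and uniformly along the segment. Concretely, you must show that the combined third derivative $\alpha^3\psi''(\alpha(n-t))-(\alpha+2)^3\psi''(\alpha n+\ell-(\alpha+2)t+2)$ is $O(n^{-2})$ for $0\le t\le k$.

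The paper does exactly this when it bounds $F_n'''$. It writes $\alpha+\mu_n-(\alpha+2)x=(\alpha+2)(1-x+b_1)$ with $b_1=(\mu_n-2)/(\alpha+2)$. The two large terms $-\alpha/(1-x)^2$ and $(\alpha+2)/(1-x+b_1)^2$ then combine into something $O(1)$, because $\alpha b_1$ and $\alpha b_1^2$ are bounded.

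Your remark that every term of $F''$ stays bounded in units of $1/(\alpha n)$ is also inaccurate: two of those terms are of order $\alpha^2$. What is true is that $(\alpha+2)^2/N-\alpha/n=O(1/n)$ once the terms are combined. With the third-order cancellation added, your argument is complete.
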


\begin{proof}
Write $\Xi_{n,k}=U_{n,k}V_{n,k},$ where
\[
U_{n,k}=\prod_{j=1}^{k-1}\frac{(n-j)\mu_n}{\ell-(2j-1)}
=
\prod_{j=1}^{k-1}\frac{1-j/n}{1-(2j-1)/\ell},
\]
and
\[
V_{n,k}=
\frac{\mathbb P(S_{n-k}=\ell-2k+2)}{\mathbb P(S_n=\ell)}.
\]

Since $k\le B\varepsilon^{-2}\log x_n$ and $x_n=\varepsilon^3 n\to\infty$,
\[
\frac{k}{n}
=
O\!\left(\frac{\varepsilon\log x_n}{x_n}\right)
=o(1),
\qquad
\frac{k}{n^{2/3}}
=
O\!\left(\frac{\log x_n}{x_n^{2/3}}\right)
=o(1).
\]
In particular, $k=o(n^{2/3})$ and $k=o(n)$.

We begin with $U_{n,k}$. Since $j/n=o(1)$ and $(2j-1)/\ell=o(1)$ uniformly for $1\le j\le k-1$,
the expansion $\log(1-u)=-u+O(u^2)$ gives
\[
\begin{aligned}
\log U_{n,k}
&=
\sum_{j=1}^{k-1}
\left(
-\frac{j}{n}
+
\frac{2j-1}{\ell}
\right)
+
O\left(
\sum_{j=1}^{k-1}\frac{j^2}{n^2}
+
\sum_{j=1}^{k-1}\frac{j^2}{\ell^2}
\right) \\
&=
\sum_{j=1}^{k-1}
\left(
-\frac{j}{n}
+
\frac{2j-1}{\ell}
\right)
+
O\left(
\sum_{j=1}^{k-1}\frac{j^2}{n^2}
\right).
\end{aligned}
\]
Note that
\[
\sum_{j=1}^{k-1} j=\frac{k^2}{2}+O(k),
\qquad
\sum_{j=1}^{k-1} (2j-1)=k^2+O(k),
\qquad
\sum_{j=1}^{k-1} j^2=O(k^3),
\]
and $\ell=\mu_n n$, so
\begin{equation}\label{eq:logU}
\log U_{n,k}=
-\frac{1}{n}
\left(
\frac{k^2}{2}
+
O(k)
\right)
+
\frac{1}{\ell}
\left(
k^2
+
O(k)
\right)
+
O\left(
\frac{k^3}{n^2}
\right)
=
\left(\frac1{\mu_n}-\frac12\right)\frac{k^2}{n}
+
O\!\left(\frac{k}{n}+\frac{k^3}{n^2}\right).
\end{equation}

We next estimate $V_{n,k}$ in a way that is uniform in $\alpha$.  Recall that $\ell=\mu_n n,$ set $q_k=(\alpha+2)k-2.$ Since $S_r\sim \operatorname{NBin}(r\alpha,p_n)$, we have
\[
V_{n,k}
=
\frac{\Gamma(n\alpha)}{\Gamma(n\alpha-\alpha k)}
\cdot
\frac{\Gamma(n\alpha+\ell-v)}{\Gamma(n\alpha+\ell)}
\cdot
\frac{\Gamma(\ell+1)}{\Gamma(\ell-(2k-2)+1)}
\cdot
(1-p_n)^{-\alpha k}p_n^{-(2k-2)}.
\]
Define
\[
h(z)=\left(z-\frac12\right)\log z-z,
\]
and $$\varphi(y)=y+(1-y)\log(1-y)\qquad (0\le y<1).$$
By Stirling's formula,
\[
\log\Gamma(z)=h(z)+\frac12\log(2\pi)+r(z),
\]
where $r(z)=O(z^{-1}),$
uniformly for $z\ge1$.
We claim that, uniformly whenever $0\le u\le z/2$,
\begin{equation}\label{eq:h-diff}
h(z-u)-h(z)=-u\log z+z\varphi(u/z)+O(u/z).
\end{equation}
Indeed,
\[
h(z-u)-h(z)
=
-u\log z+\left(z-u-\frac12\right)\log\!\left(1-\frac{u}{z}\right)+u.
\]
The last two terms equal $z\varphi(u/z)-\frac12\log(1-u/z),$ and since $u/z\le1/2$, we have $|\log(1-u/z)|=O(u/z)$, this proves \eqref{eq:h-diff}.

Now $\frac{\alpha k}{n\alpha}=\frac{k}{n}=o(1),$ $\frac{q_k}{n\alpha+\ell}=O\!\left(\frac{k}{n}\right)=o(1),$ and $\frac{2k-2}{\ell}=O\!\left(\frac{k}{n}\right)=o(1),$ and therefore $u\le z/2$ in each of the three gamma ratio applications below. Using \eqref{eq:h-diff}
and $r(z)=O(z^{-1})$, we obtain
\[
\log\frac{\Gamma(n\alpha)}{\Gamma(n\alpha-\alpha k)}
=
\alpha k\log n\alpha-n\alpha\varphi(\alpha k/n\alpha)+O(k/n),
\]
\[
\log\frac{\Gamma(n\alpha+\ell-v)}{\Gamma(n\alpha+\ell)}
=
-q_k\log(n\alpha+\ell)+(n\alpha+\ell)\varphi(q_k/(n\alpha+\ell))+O(k/n),
\]
and
\[
\log\frac{\Gamma(\ell+1)}{\Gamma(\ell-2k+3)}
=
(2k-2)\log \ell-\ell\varphi((2k-2)/\ell)+O(k/n).
\]
Therefore
\begin{align}
\log V_{n,k}
={}&
\alpha k\log n\alpha-q_k\log(n\alpha+\ell)+(2k-2)\log \ell-\alpha k\log(1-p_n)-(2k-2)\log p_n
\notag\\
&\hspace{4em}
-n\alpha\varphi(\alpha k/n\alpha)+(n\alpha+\ell)\varphi(q_k/(n\alpha+\ell))-\ell\varphi((2k-2)/\ell)+O(k/n).
\label{eq:logV-raw}
\end{align}
Since $1-p_n=\frac{n\alpha}{n\alpha+\ell},$ $p_n=\frac{\ell}{n\alpha+\ell},$ $q_k=\alpha k+2k-2,$ the logarithmic terms in \eqref{eq:logV-raw} cancel exactly. Thus
\begin{equation}\label{eq:logV-phi}
\log V_{n,k}
=
-n\alpha\varphi(\alpha k/n\alpha)+(n\alpha+\ell)\varphi(q_k/(n\alpha+\ell))-\ell\varphi((2k-2)/\ell)+O(k/n).
\end{equation}

Replacing $v$ by $(\alpha+2)k$ and $(2k-2)$ by $2k$ at a total cost $O(k/n)$.
Indeed, $\varphi'(y)=-\log(1-y)=O(y)$ uniformly for $0\le y\le Ck/n$, and each replacement changes
the relevant argument of $\varphi$ by $O(1/n)$. Hence, by the mean value theorem,
\[
(n\alpha+\ell)\left|
\varphi\!\left(\frac{q_k}{n\alpha+\ell}\right)-\varphi\!\left(\frac{(\alpha+2)k}{n\alpha+\ell}\right)
\right|
=O(k/n),
\]
and similarly
\[
\ell\left|
\varphi\!\left(\frac{2k-2}{\ell}\right)-\varphi\!\left(\frac{2k}{\ell}\right)
\right|
=O(k/n).
\]
Therefore
\begin{equation}\label{eq:logV-F}
\log V_{n,k}=nF_n(k/n)+O(k/n),
\end{equation}
where
\[
F_n(x)=
-\alpha\varphi(x)+(\alpha+\mu_n)\varphi\!\left(\frac{(\alpha+2)x}{\alpha+\mu_n}\right)
-\mu_n\varphi\!\left(\frac{2x}{\mu_n}\right).
\]

Note that $F_n(0)=0,$ $F_n'(0)=0.$ A direct calculation gives
\[
F_n''(0)
=
-\alpha+\frac{(\alpha+2)^2}{\alpha+\mu_n}-\frac{4}{\mu_n}.
\]
Hence
\begin{align}
\frac12 F_n''(0)
&=
-\frac{\alpha}{2}+\frac{(\alpha+2)^2}{2(\alpha+\mu_n)}-\frac{2}{\mu_n}
\notag\\
&=
-\left(\frac1{\mu_n}-\frac12\right)
-\frac{(\mu_n-2)\bigl((\alpha+1)\mu_n-\alpha\bigr)}{2\mu_n(\alpha+\mu_n)}.
\label{eq:Fpp}
\end{align}
Now
\[
(\alpha+1)\mu_n-\alpha
=
(\alpha+1)\frac{\alpha}{\alpha+1}(1-\varepsilon)-\alpha
=
-\alpha\varepsilon,
\]
so the second term on the right hand side of \eqref{eq:Fpp} is $O(\varepsilon)$ uniformly in $n$.
Therefore
\begin{equation}\label{eq:Fpp-simplified}
\frac12 F_n''(0)
=
-\left(\frac1{\mu_n}-\frac12\right)+O(\varepsilon).
\end{equation}

We next bound $F_n'''$ uniformly on the range of interest. Differentiating yields
\[
F_n'''(x)
=
-\frac{\alpha}{(1-x)^2}
+
\frac{(\alpha+2)^3}{\bigl(\alpha+\mu_n-(\alpha+2)x\bigr)^2}
-
\frac{8}{(\mu_n-2x)^2}.
\]
For $0\le x\le \frac{B\varepsilon^{-2}\log x_n}{n},$ we have $x=o(1)$, and since $\mu_n\to a/(a+1)$ when $a<\infty$ and $\mu_n\to1$ when $a=\infty$,
the quantity $\mu_n$ is bounded away from $0$. Hence the last term is $O(1)$ uniformly.

For the first two terms, set $b_1=\frac{\mu_n-2}{\alpha+2}.$ Then
\[
\alpha+\mu_n-(\alpha+2)x=(\alpha+2)(1-x+b_1),
\]
and therefore
\[
-\frac{\alpha}{(1-x)^2}
+
\frac{\alpha+2}{(1-x+b_1)^2}
=
\frac{2(1-x)^2-2\alpha (1-x) b_1-\alpha b_1^2}{(1-x)^2(1-x+b_1)^2}.
\]
Now $1-x$ is bounded away from $0$, while
\[
\alpha b_1=\frac{\alpha(\mu_n-2)}{\alpha+2}=O(1),
\]
and $$\alpha b_1^2=\frac{\alpha(\mu_n-2)^2}{(\alpha+2)^2}=O(1)$$
uniformly in $n$. Thus $F_n'''(x)=O(1)$ uniformly over \(x\).

Taylor's expansion therefore gives
\[
F_n(x)=\frac12F_n''(0)x^2+O(x^3)
\]
uniformly for $0\le x\le B\varepsilon^{-2}\log x_n/n$. Taking $x=k/n$ and using
\eqref{eq:logV-F} and \eqref{eq:Fpp-simplified}, we obtain
\begin{equation}\label{eq:logV}
\log V_{n,k}
=
-\left(\frac1{\mu_n}-\frac12\right)\frac{k^2}{n}
+
O\!\left(
\varepsilon\frac{k^2}{n}+\frac{k}{n}+\frac{k^3}{n^2}
\right).
\end{equation}

Combining \eqref{eq:logU} and \eqref{eq:logV}, we get
\[
\log \Xi_{n,k}
=
O\!\left(
\varepsilon\frac{k^2}{n}+\frac{k}{n}+\frac{k^3}{n^2}
\right).
\]
Finally,
\[
\varepsilon\frac{k^2}{n}
=
O\!\left(\frac{\log^2 x_n}{x_n}\right)=o(1),
\qquad
\frac{k}{n}
=
O\!\left(\frac{\varepsilon\log x_n}{x_n}\right)=o(1),
\qquad
\frac{k^3}{n^2}
=
O\!\left(\frac{\log^3 x_n}{x_n^2}\right)=o(1),
\]
uniformly in the present range. Hence $\log \Xi_{n,k}=o(1)$ uniformly, and therefore
\[
\Xi_{n,k}=1+o(1)
\]
uniformly for $2\le k\le B\varepsilon^{-2}\log x_n$.
\end{proof}

The following lemma gives the leading order estimate for the expected
number of tree components of size \(k\).

\begin{lemma}\label{lem:ank-unified}
Let
\[
\Psi_n=\log\frac{1}{1-\varepsilon}+(\alpha+2)\log\!\left(1-\frac{\varepsilon}{\alpha+2}\right).
\]
Then
\[
\Psi_n=\frac{\alpha+1}{2(\alpha+2)}\varepsilon^2+O(\varepsilon^3),
\]
where the \(O(\varepsilon^3)\) remainder is uniform in \(n\). Moreover, for every fixed \(B<\infty\) there exist constants
\(0<c_1<c_2<\infty\), depending only on \(B\) and \(\alpha_*\), such that, for all sufficiently large \(n\),
uniformly for $2\le k\le B\varepsilon^{-2}\log x_n,$ we have
\[
c_1 k^{-3/2}e^{-\Psi_n k}\le a_{n,k}\le c_2 k^{-3/2}e^{-\Psi_n k}.
\]
Consequently,
\[
\mathbb E C_k^{(n,m)}=\Theta\!\bigl(nk^{-5/2}e^{-\Psi_n k}\bigr).
\]
\end{lemma}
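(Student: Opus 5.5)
The expansion of $\Psi_n$ is a direct Taylor computation, and the bounds on $a_{n,k}$ come from Stirling's formula applied to the closed form in Proposition~\ref{prop:tree-first-moment}.

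\emph{Expansion of $\Psi_n$.} Write $\log\frac1{1-\varepsilon}=\varepsilon+\frac{\varepsilon^2}{2}+O(\varepsilon^3)$ and
\[
(\alpha+2)\log\!\left(1-\frac{\varepsilon}{\alpha+2}\right)=-\varepsilon-\frac{\varepsilon^2}{2(\alpha+2)}+O\!\left(\frac{\varepsilon^3}{(\alpha+2)^2}\right).
\]
The linear terms cancel, and the quadratic terms sum to $\frac{\alpha+1}{2(\alpha+2)}\varepsilon^2$. Both remainders are uniform because $\alpha\ge\alpha_*$ and $\varepsilon\to0$.

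\emph{Stirling expansion of $a_{n,k}$.} Set $N=(\alpha+2)k-2$ and $K=k-1$. By Proposition~\ref{prop:tree-first-moment},
\[
a_{n,k}=\frac{\alpha}{N}\binom{N}{K}p_n^{K}(1-p_n)^{N-K},
\]
since $N-K=(\alpha+1)k-1$. This is $\frac{\alpha}{N}$ times a binomial-type probability with non-integer $N$, written via Gamma functions. Apply Stirling's formula to $\Gamma(N+1)$, $\Gamma(K+1)$ and $\Gamma(N-K+1)$, with $r(z)=O(1/z)$ errors. This gives
\[
\binom{N}{K}p^K(1-p)^{N-K}=\Theta\!\left(\sqrt{\tfrac{N}{K(N-K)}}\right)\exp\!\bigl(-N\,H(K/N\,\|\,p)\bigr),
\]
where $H(x\|p)=x\log\frac{x}{p}+(1-x)\log\frac{1-x}{1-p}$ is the relative entropy. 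The prefactor $\frac{\alpha}{N}\sqrt{N/(K(N-K))}$ equals $\Theta(k^{-3/2})$ uniformly in $\alpha\ge\alpha_*$. Indeed, $N\asymp(\alpha+2)k$, $K\asymp k$ and $N-K\asymp(\alpha+1)k$, so the prefactor is $\frac{\alpha}{(\alpha+2)k}\cdot\Theta(k^{-1/2})$, and $\alpha/(\alpha+2)\in[\alpha_*/(\alpha_*+2),1]$.

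Small $k$ (say $k\le k_0$) has $K$ or $N-K$ of order one, so I would treat it directly. For $K=1$ the formula gives a value of order one: $p_n(1-p_n)^{\alpha+1}$ with $\alpha p_n\to$ const. This also requires checking that constants do not blow up as $\alpha\to\infty$. Since the Poisson-type limit is harmless, I expect this check to be routine.

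\emph{Exponent.} Compute $N H(K/N\|p_n)$ with $K/N=\frac{k-1}{(\alpha+2)k-2}$. The natural comparison point is $x_0=\frac1{\alpha+2}$, for which a direct calculation gives
\[
(\alpha+2)k\,H(x_0\|p_n)=k\left[\log\frac{1-\varepsilon_0}{1-\varepsilon}\cdots\right].
\]
More precisely, with $p_n=\frac{1-\varepsilon}{\alpha+2-\varepsilon}$ one gets exactly
\[
H(x_0\|p_n)=\frac1{\alpha+2}\log\frac{1}{(\alpha+2)p_n}+\frac{\alpha+1}{\alpha+2}\log\frac{\alpha+1}{(\alpha+2)(1-p_n)}.
\]
Substituting $(\alpha+2)p_n=\frac{(\alpha+2)(1-\varepsilon)}{\alpha+2-\varepsilon}$ and $(\alpha+2)(1-p_n)=\frac{(\alpha+2)(\alpha+1)}{\alpha+2-\varepsilon}$ shows that $(\alpha+2)H(x_0\|p_n)=\Psi_n$. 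Hence the main exponential term is $e^{-\Psi_n k}$.

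The remaining step, which I expect to be the main obstacle, is the shift from $(N,K)$ to $((\alpha+2)k,k)$. It changes $N$ by $2$ and $K$ by $1$. Because $\partial_K$ and $\partial_N$ of $N H(K/N\|p_n)$ are $\log\frac{K(1-p)}{(N-K)p}$ and $\log\frac{N-K}{N(1-p)}$, which are $O(\varepsilon)+O(1/k)$ near the comparison point, the shift costs only $O(1)$ in the exponent. This must be uniform in $\alpha$; the $N$-derivative carries a factor that is $O(1/\alpha)$, which helps.

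\emph{Consequence.} Finally, Proposition~\ref{prop:tree-first-moment} and Lemma~\ref{lem:xi-unified} give $\mathbb E C_k^{(n,m)}=\frac nk a_{n,k}(1+o(1))=\Theta(nk^{-5/2}e^{-\Psi_nk})$.
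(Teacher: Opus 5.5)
Your proposal is correct and follows essentially the paper's route. Both arguments use uniform Stirling bounds for the three Gamma functions, show the prefactor $\frac{\alpha}{N}\sqrt{N/(K(N-K))}=\Theta(k^{-3/2})$ uniformly in $\alpha\ge\alpha_*$, and obtain the exponent $-\Psi_n k+O(1)$ by comparing $(N,K)$ with $((\alpha+2)k,k)$; your identity $(\alpha+2)H(\tfrac{1}{\alpha+2}\|p_n)=\Psi_n$ just repackages the paper's term-by-term logarithmic expansion. The separate small-$k$ treatment is unnecessary, because $\Gamma(x+1)\asymp x^{x+1/2}e^{-x}$ holds with absolute constants for all $x\ge1$, and $k-1$, $(\alpha+1)k-1$, $(\alpha+2)k-2$ are all at least $1$ when $k\ge2$ (also, $a_{n,2}=\alpha p_n(1-p_n)^{2\alpha+1}$, not $p_n(1-p_n)^{\alpha+1}$, which would tend to $0$ as $\alpha\to\infty$).
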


\begin{proof}
Since \(\varepsilon\to0\), we may assume throughout that \(\varepsilon\le 1/2\). By the Taylor expansions
\[
\log\frac1{1-\varepsilon}
=
\varepsilon+\frac{\varepsilon^2}{2}+O(\varepsilon^3)
\]
and
\[
(\alpha+2)\log\!\left(1-\frac{\varepsilon}{\alpha+2}\right)
=
-\varepsilon-\frac{\varepsilon^2}{2(\alpha+2)}+O\!\left(\frac{\varepsilon^3}{(\alpha+2)^2}\right),
\]
we obtain
\[
\Psi_n
=
\frac{\alpha+1}{2(\alpha+2)}\varepsilon^2+O(\varepsilon^3).
\]
The remainder is uniform because \(\alpha+2\ge \alpha_*+2>0\).

We next estimate \(a_{n,k}\). By Proposition~\ref{prop:tree-first-moment},
\[
a_{n,k}
=
\frac{\alpha}{q_k}\binom{q_k}{k-1}p_n^{k-1}(1-p_n)^L,
\]
where $q_k=(\alpha+2)k-2,$ and $L=q_k-k+1=(\alpha+1)k-1$. Note that \(k-1\ge1\), \(L\ge 2\alpha_*+1>1\), and \(q_k\ge 2\alpha_*+2>1\) for every \(k\ge2\).

For the generalized binomial coefficient for possibly noninteger \(q_k\) and \(L\), define
\[
g(x)=\Gamma(x+1)e^x x^{-x-1/2},\qquad x\ge1.
\]
The function \(g\) is continuous and strictly positive on \([1,\infty)\), and Stirling's formula gives
\(g(x)\to\sqrt{2\pi}\) as \(x\to\infty\). Hence there exist absolute constants \(0<c_\Gamma<C_\Gamma<\infty\)
such that
\begin{equation}
c_\Gamma x^{x+1/2}e^{-x} \le \Gamma(x+1) \le C_\Gamma x^{x+1/2}e^{-x}, \qquad x \ge 1.
\label{eq:gamma_bound}
\end{equation}
Applying \eqref{eq:gamma_bound} with \(x=q_k,k-1,L\), and recalling that
\[
\binom{q_k}{k-1}=\frac{\Gamma(q_k+1)}{\Gamma(k)\Gamma(L+1)},
\]
we obtain
\[
\binom{q_k}{k-1}
=
\Theta\!\left(\sqrt{\frac{q_k}{(k-1)L}}\,\frac{q_k^{q_k}}{(k-1)^{k-1} L^L}\right),
\]
uniformly for all \(k\ge2\). Therefore
\begin{equation}
a_{n,k} = \Theta\!\left(\frac{\alpha}{\sqrt{q_k(k-1)L}}\,A_{n,k}\right),
\label{eq:a_nk}
\end{equation}
where $$
A_{n,k}=\left(\frac{q_kp_n}{k-1}\right)^{k-1}\left(\frac{q_k(1-p_n)}{L}\right)^L.
$$

We first estimate the prefactor in \eqref{eq:a_nk}. Since \(k\ge2\), $\frac{k}{2}\le k-1\le k, \frac{(\alpha+1)k}{2}\le L\le (\alpha+1)k,$ and $\frac{(\alpha+2)k}{2}\le q_k\le (\alpha+2)k.$  Hence
\[
\frac{\alpha}{\sqrt{(\alpha+2)(\alpha+1)}}\;k^{-3/2}
\le
\frac{\alpha}{\sqrt{q_k(k-1)L}}
\le
\frac{2\sqrt2\,\alpha}{\sqrt{(\alpha+2)(\alpha+1)}}\;k^{-3/2}.
\]
Since \(\alpha\ge \alpha_*\),
\[
\frac{\alpha}{\sqrt{(\alpha+2)(\alpha+1)}}
\ge
\frac{\alpha}{\alpha+2}
\ge
\frac{\alpha_*}{\alpha_*+2},
\]
and \(\alpha/\sqrt{(\alpha+2)(\alpha+1)}\le1\). Therefore
\[
\frac{\alpha}{\sqrt{q_k(k-1)L}}=\Theta(k^{-3/2}).
\]

It remains to estimate \(A_{n,k}\). Taking logarithms gives
\[
\log A_{n,k}
=
(k-1)\log\frac{q_k}{k-1}+L\log\frac{q_k}{L}+(k-1)\log p_n+L\log(1-p_n).
\]

We estimate these terms as follows. First,
\[
\frac{q_k}{k-1}
=
(\alpha+2)\left(1+\frac{\alpha}{(\alpha+2)(k-1)}\right).
\]
Note that \(0\le \frac{\alpha}{(\alpha+2)(k-1)}\le 1/(k-1)\le1\), so
\[
0\le (k-1)\log(1+\frac{\alpha}{(\alpha+2)(k-1)})\le (k-1)\frac{\alpha}{(\alpha+2)(k-1)}=\frac{\alpha}{\alpha+2}\le1.
\]
Consequently, uniformly in \(k\),
\begin{equation}\label{eq:KlogNK}
(k-1)\log\frac{q_k}{k-1}
=
(k-1)\log(\alpha+2)+O(1).
\end{equation}


Second,
\[
\frac{q_k}{L}
=
\frac{\alpha+2}{\alpha+1}\,r_{n,k},
\]
where $$r_{n,k}=\frac{1-\frac{2}{(\alpha+2)k}}{1-\frac{1}{(\alpha+1)k}}.$$
A simple calculation gives
\[
r_{n,k}=1-\frac{\alpha}{(\alpha+2)\bigl((\alpha+1)k-1\bigr)}.
\]
Hence
\[
|r_{n,k}-1|
\le
\frac{1}{(\alpha+1)k}
\le \frac12.
\]
Since \(|\log(1-z)|\le 2|z|\) for \(|z|\le1/2\), it follows that
\[
|\log r_{n,k}|
\le
\frac{2\alpha}{(\alpha+2)\bigl((\alpha+1)k-1\bigr)}.
\]
Multiplying by \(L=(\alpha+1)k-1\), we obtain $L|\log r_{n,k}|\le \frac{2\alpha}{(\alpha+2)}\le2.$ Therefore, uniformly in \(k\),
\begin{equation}\label{eq:LlogNL}
L\log\frac{q_k}{L}
=
((\alpha+1)k-1)\log\frac{\alpha+2}{\alpha+1}+O(1).
\end{equation}

Third, since $1-p_n=\frac{\alpha+1}{\alpha+2-\varepsilon},$
we have
\begin{equation}\label{eq:p-log-terms}
\begin{aligned}
(k-1)\log p_n+L\log(1-p_n)
&=
(k-1)\log(1-\varepsilon)
+
((\alpha+1)k-1)\log(\alpha+1) \\
&\quad
-
((\alpha+2)k-2)\log(\alpha+2-\varepsilon).
\end{aligned}
\end{equation}
Combining \eqref{eq:KlogNK}, \eqref{eq:LlogNL}, and \eqref{eq:p-log-terms}, the terms involving \(\log(\alpha+1)\) cancel, and we get
\[
\log A_{n,k}
=
((\alpha+2)k-2)\log\frac{\alpha+2}{\alpha+2-\varepsilon}+(k-1)\log(1-\varepsilon)+O(1).
\]
Now
\[
\log\frac{\alpha+2}{\alpha+2-\varepsilon}=-\log\!\left(1-\frac{\varepsilon}{\alpha+2}\right)=O\!\left(\frac{\varepsilon}{\alpha+2}\right),
\qquad
\log(1-\varepsilon)=O(\varepsilon),
\]
so replacing \(((\alpha+2)k-2)\) by \((\alpha+2)k\) and \((k-1)\) by \(k\) only changes the expression by
\(O(1)\). Therefore
\[
\log A_{n,k}
=
-(\alpha+2)k\log\!\left(1-\frac{\varepsilon}{\alpha+2}\right)+k\log(1-\varepsilon)+O(1)
=
-\Psi_n k+O(1)
\]
uniformly for \(2\le k\le B\varepsilon^{-2}\log x_n\). Exponentiating gives \(A_{n,k}=\Theta(e^{-\Psi_n k})\) uniformly. Thus, we have $a_{n,k}=\Theta\!\bigl(k^{-3/2}e^{-\Psi_n k}\bigr)$ uniformly for \(2\le k\le B\varepsilon^{-2}\log x_n\).

Finally, Proposition~\ref{prop:tree-first-moment} and Lemma~\ref{lem:xi-unified} yield
\[
\mathbb E C_k^{(n,m)}
=
\frac{n}{k}\,a_{n,k}\,\Xi_{n,k}.
\]
Since \(\Xi_{n,k}=1+o(1)\) uniformly in the same range, it is bounded between \(1/2\) and \(2\) for all
sufficiently large \(n\), and therefore
\[
\mathbb E C_k^{(n,m)}=\Theta\!\bigl(nk^{-5/2}e^{-\Psi_n k}\bigr)
\]
uniformly for \(2\le k\le B\varepsilon^{-2}\log x_n\). 
\end{proof}

After deleting a tree component of moderate size, the
expectation remains almost unchanged. Together with Proposition~\ref{prop:regeneration},
Lemma~\ref{lem:compare-expect} then implies that the correlation between two tree
components of moderate size is weak.
\begin{lemma}\label{lem:compare-expect}
Fix \(B\in(0,\infty)\). Uniformly over all integers $2\le j,k\le B\varepsilon^{-2}\log x_n,$ we have
\[
\frac{\mathbb E C_j^{(n-k,m-k+1)}}{\mathbb E C_j^{(n,m)}}=1+o(1).
\]
\end{lemma}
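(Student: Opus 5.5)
Apply the exact formula of Proposition~\ref{prop:tree-first-moment} to both sides and compare factor by factor. Put $n'=n-k$ and $m'=m-k+1$, so $\ell'=2m'=\ell-2k+2$. Then
\[
\mathbb E C_j^{(n',m')}=\frac{n'}{j}\,a'_{n',j}\,\Xi'_{n',j},
\]
where $a'$ and $\Xi'$ are built from $\operatorname{NBin}(\alpha,p')$ with $p'=\ell'/(\alpha n'+\ell')$ and mean $\mu'=\ell'/n'$. The ratio to be controlled therefore splits as
\[
\frac{n-k}{n}\cdot\frac{a'_{n',j}}{a_{n,j}}\cdot\frac{\Xi'_{n',j}}{\Xi_{n,j}}.
\]
The first factor is $1+O(k/n)=1+o(1)$, since $k/n=O(\varepsilon\log x_n/x_n)$ as in the proof of Lemma~\ref{lem:xi-unified}.

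\textbf{The $\Xi$ factors.} I will rerun Lemma~\ref{lem:xi-unified} with the new parameters. To do this I write $\mu'=\frac{\alpha}{\alpha+1}(1-\varepsilon')$, which defines $\varepsilon'$, and check three things.
\begin{itemize}
\item The size of $\mu'$: $\mu'=\mu_n+O(k/n)$. Indeed $\ell'/n'=(\ell-2k+2)/(n-k)=\mu_n-(2-\mu_n)k/n+O(k^2/n^2)$.
\item The size of $\varepsilon'$: $\varepsilon'=\varepsilon+O(k/n)=\varepsilon(1+o(1))$, because $k/n=o(\varepsilon)$, i.e.\ $\log x_n/x_n\to0$.
\item The growth condition: $\varepsilon'^3n'\sim x_n$, so $\log x_{n'}\sim\log x_n$ and the range $j\le B\varepsilon^{-2}\log x_n$ lies inside $j\le 2B\varepsilon'^{-2}\log x_{n'}$.
\end{itemize}
The proof of Lemma~\ref{lem:xi-unified} used only these features ($\mu$ bounded away from $0$, $(\alpha+1)\mu-\alpha=-\alpha\varepsilon$, and $\varepsilon^3n\to\infty$), with constants depending on $B$ and $\alpha_*$. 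So it applies verbatim and gives $\Xi'_{n',j}=1+o(1)$ uniformly. Together with $\Xi_{n,j}=1+o(1)$, the $\Xi$ ratio is $1+o(1)$. Since $\varepsilon'$ depends on $k$, I will state the lemma's uniformity in the form "uniform over parameter pairs $(n,\varepsilon)$ with $\varepsilon^3n\ge x\to\infty$".

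\textbf{The $a$ factors (main step).} Here I use the closed form in Proposition~\ref{prop:tree-first-moment}. The binomial prefactor depends only on $\alpha$ and $j$, so it cancels exactly, leaving
\[
\frac{a'_{n',j}}{a_{n,j}}=\Bigl(\frac{p'}{p_n}\Bigr)^{j-1}\Bigl(\frac{1-p'}{1-p_n}\Bigr)^{(\alpha+1)j-1}.
\]
From $p=\mu/(\alpha+\mu)$ we get $\partial_\mu\log p=\alpha/(\mu(\alpha+\mu))$ and $\partial_\mu\log(1-p)=-1/(\alpha+\mu)$. With $\delta=\mu'-\mu_n=O(k/n)$, the log of the ratio is
\[
\delta\,\Bigl[\frac{(j-1)\alpha}{\mu(\alpha+\mu)}-\frac{(\alpha+1)j-1}{\alpha+\mu}\Bigr]+O\bigl(j\delta^2(\alpha+1)\bigr).
\]
\begin{itemize}
\item The leading bracket is $\frac{j}{\alpha+\mu}\bigl(\alpha/\mu-(\alpha+1)\bigr)+O(1)$. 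Since $\alpha/\mu-(\alpha+1)=(\alpha+1)\varepsilon/(1-\varepsilon)$ and $(\alpha+1)/(\alpha+\mu)=O(1)$, the bracket is $O(j\varepsilon+1)$.
\item The second-order term needs care when $\alpha\to\infty$. There $\log(1-p)$ has second derivative $O(\alpha^{-2})$ while its exponent is $O(\alpha j)$, so it contributes $O(j\delta^2/\alpha)$. The $p$ term contributes $O(j\delta^2)$.
\end{itemize}
Hence the log-ratio is
\[
O\Bigl(\frac{k}{n}\,(j\varepsilon+1)+\frac{jk^2}{n^2}\Bigr)=O\Bigl(\frac{\log^2x_n}{x_n}+\frac{\varepsilon\log x_n}{x_n}\Bigr)=o(1),
\]
uniformly in $j$ and $k$.

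I expect the main obstacle to be exactly this cancellation. Without it the bracket is of order $j$, giving $jk/n\asymp\varepsilon^{-3}\log^2x_n/n=\log^2x_n/x_n$. That is still $o(1)$, so even a crude bound suffices. The real care is in making every estimate uniform in $\alpha\in[\alpha_*,\infty)$, which is where the derivative bounds above have to be checked. Multiplying the three factors gives the lemma.
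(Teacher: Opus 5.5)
Your proposal is correct and follows essentially the paper's route. It uses the same three-factor split via Proposition~\ref{prop:tree-first-moment}, reruns Lemma~\ref{lem:xi-unified} at $(n',m',\varepsilon')$ for the $\Xi$ ratio, and makes a first-order expansion of $\log a_{n,j}$ that exploits the cancellation making the derivative $O(j\varepsilon+1)$. (The paper differentiates in $\varepsilon$ and you differentiate in $\mu=\frac{\alpha}{\alpha+1}(1-\varepsilon)$, which is equivalent.)

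Your closing remark needs correcting: the cancellation is essential, not optional. At $j=k=B\varepsilon^{-2}\log x_n$ one has $jk/n=B^2\varepsilon^{-4}\log^2x_n/n=B^2\varepsilon^{-1}\log^2x_n/x_n$, not $\log^2x_n/x_n$, and this need not tend to $0$. So a crude $O(j)$ bound on the bracket would not suffice.
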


\begin{proof}
Set $n'=n-k, m'=m-k+1, \ell'=2m'.$ Write $m'=\frac{\alpha n'}{2(\alpha+1)}(1-\varepsilon'),$ so that
\[
\varepsilon'=1-\frac{2(\alpha+1)m'}{\alpha n'}.
\]
Since $\mu_n'= \frac{2m'}{n'}=\frac{\ell-2k+2}{n-k},$ and $\mu_n=\frac{\ell}{n},$ we have
\[
\mu_n'-\mu_n
=
\frac{\ell-2k+2}{n-k}-\frac{\ell}{n}
=
\frac{k(\ell-2n)+2n}{n(n-k)}
=
O\!\left(\frac{k}{n}\right),
\]
where the last equation uses $\ell=O(n)$. Since $\varepsilon=1-\frac{\alpha+1}{\alpha}\mu_n,$ and $\varepsilon'=1-\frac{\alpha+1}{\alpha}\mu_n',$
it follows that
\begin{equation}\label{eq:eps-prime}
\varepsilon'-\varepsilon=O\!\left(\frac{k}{n}\right)
\end{equation}
uniformly in the stated range.

Moreover, $\frac{k}{\varepsilon n}
=
O\!\left(\frac{\log x_n}{x_n}\right)=o(1),$ so $\varepsilon'\sim \varepsilon$. In particular, $(\varepsilon')^3 n'
=
x_n(1+o(1)).$ Applying Proposition~\ref{prop:tree-first-moment} to $(n,m)$ and $(n',m')$, we obtain
\begin{equation}
\label{eq:expectation-ratio-deleted-component}
\frac{\mathbb{E} C_j^{(n-k,m-k+1)}}{\mathbb{E} C_j^{(n,m)}}
=
\frac{n'}{n}
\cdot
\frac{a'_{n,j}}{a_{n,j}}
\cdot
\frac{\Xi'_{n,j}}{\Xi_{n,j}}.
\end{equation}
where the primed quantities are defined with $(n',m')$ in place of $(n,m)$.
The first factor in \eqref{eq:expectation-ratio-deleted-component} satisfies
\[
\frac{n'}{n}=1+O\!\left(\frac{k}{n}\right)=1+o(1).
\]
For the last factor, Lemma~\ref{lem:xi-unified} applies to both $(n,m)$ and
$(n',m')$, since $(\varepsilon')^3 n'\to\infty$ and the same bound on $j$
remains valid.

Hence \(\Xi_{n,j}=1+o(1)\) and \(\Xi'_{n,j}=1+o(1)\) uniformly, and therefore

\[
\frac{\Xi'_{n,j}}{\Xi_{n,j}}=1+o(1).
\]

It remains to compare $a'_{n,j}$ and $a_{n,j}$. By Proposition~\ref{prop:tree-first-moment},
\[
a_{n,j}
=
\frac{\alpha}{(\alpha+2)j-2}
\binom{(\alpha+2)j-2}{j-1}
p_n^{j-1}(1-p_n)^{(\alpha+1)j-1}.
\]
Recall that $1-p_n=\frac{\alpha+1}{\alpha+2-\varepsilon},$
we may write
\[
\log a_{n,j}
=
c_j+(j-1)\log(1-\varepsilon)-((\alpha+2)j-2)\log(\alpha+2-\varepsilon),
\]
where $c_j$ depends on $j$ and $\alpha$, but not on $\varepsilon$.
Differentiating with respect to $\varepsilon$ gives
\[
\frac{\partial}{\partial\varepsilon}\log a_{n,j}
=
-\frac{j-1}{1-\varepsilon}+\frac{(\alpha+2)j-2}{\alpha+2-\varepsilon}
=
-\frac{(\alpha+1)j\varepsilon}{(1-\varepsilon)(\alpha+2-\varepsilon)}+O(1)
=
O(j\varepsilon+1)
\]
uniformly for small $\varepsilon$. By the mean value theorem and \eqref{eq:eps-prime},
\[
\log\frac{a'_{n,j}}{a_{n,j}}
=
O\bigl((j\varepsilon+1)|\varepsilon'-\varepsilon|\bigr)
=
O\!\left((j\varepsilon+1)\frac{k}{n}\right).
\]
Note that $j,k\le B\varepsilon^{-2}\log x_n$,
\[
j\varepsilon\frac{k}{n}
=
O\!\left(\frac{\log^2 x_n}{x_n}\right)=o(1),
\qquad
\frac{k}{n}
=
O\!\left(\frac{\varepsilon\log x_n}{x_n}\right)=o(1).
\]
Therefore $\log\frac{a'_{n,j}}{a_{n,j}}=o(1)$ uniformly, and hence $\frac{a'_{n,j}}{a_{n,j}}=1+o(1).$

\end{proof}

We are now ready to give a decomposition of the second moment of the tree component count.
\begin{lemma}\label{lem:second-moment-window}
Fix $B<\infty$, and let $I_n\subseteq [2,B\varepsilon^{-2}\log x_n]\cap\mathbb N.$ Define $N_n(I_n)=\sum_{k\in I_n} C_k^{(n,m)}.$ Then
\[
\mathbb E\bigl[N_n(I_n)(N_n(I_n)-1)\bigr]
=
(1+o(1))\bigl(\mathbb E N_n(I_n)\bigr)^2.
\]
\end{lemma}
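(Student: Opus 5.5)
We expand the factorial moment as a sum over ordered pairs of distinct tree components:
\[
\mathbb E\bigl[N_n(I_n)(N_n(I_n)-1)\bigr]=\sum_{k\in I_n}\sum_{j\in I_n}\ \sum_{\substack{A,A'\subseteq[n],\ A\neq A'\\ |A|=k,\ |A'|=j}}\mathbb P\bigl(A\text{ and }A'\text{ are both tree components}\bigr).
\]
Two distinct components are vertex-disjoint, so only pairs with $A\cap A'=\varnothing$ contribute. For such a pair we write
\[
\mathbb P(A,A'\text{ tree comps})=\mathbb P(A\text{ tree comp})\,\mathbb P(A'\text{ tree comp}\mid A\text{ tree comp}).
\]
By Proposition~\ref{prop:regeneration}, conditionally on $A$ being a tree component, the induced multigraph on $[n]\setminus A$ (relabelled as $[n-k]$) has the law of $G^{\alpha,\ast}_{n-k,m-k+1}$. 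Moreover, $A'$ is a tree component of $G^{\alpha,\ast}_{n,m}$ exactly when it is a tree component of this induced multigraph, because no edges leave $A$. Hence the inner conditional probability equals the probability that a fixed $j$-set is a tree component in $G^{\alpha,\ast}_{n-k,m-k+1}$.

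Summing over $A'\subseteq[n]\setminus A$ with $|A'|=j$ gives $\mathbb E C_j^{(n-k,m-k+1)}$. Summing over $A$ with $|A|=k$ then gives $\mathbb E C_k^{(n,m)}$. We obtain the exact identity
\[
\mathbb E\bigl[N_n(N_n-1)\bigr]=\sum_{k\in I_n}\sum_{j\in I_n}\mathbb E C_k^{(n,m)}\,\mathbb E C_j^{(n-k,m-k+1)}.
\]
The conditioning event must have positive probability; if it does not, the corresponding term is simply zero on both sides. A small check is needed here: when $j$ is such that $m-k+1$ is still in range, Proposition~\ref{prop:tree-first-moment} applies to $(n-k,m-k+1)$. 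This holds because $k=o(n)$.

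Lemma~\ref{lem:compare-expect} states that, uniformly over $j,k\in[2,B\varepsilon^{-2}\log x_n]$,
\[
\mathbb E C_j^{(n-k,m-k+1)}=(1+o(1))\,\mathbb E C_j^{(n,m)}.
\]
The $o(1)$ is uniform, and all terms are nonnegative. So we may pull the factor $1+o(1)$ out of the double sum, which gives
\[
(1+o(1))\sum_{k\in I_n}\mathbb E C_k^{(n,m)}\sum_{j\in I_n}\mathbb E C_j^{(n,m)}=(1+o(1))\bigl(\mathbb E N_n(I_n)\bigr)^2 .
\]

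The only delicate step is the regeneration step: justifying that the event $\{A'\text{ is a tree component}\}$ is measurable with respect to the induced multigraph on $A^c$ under the conditioning. This is where the structure of the configuration model (all half-edges of $A$ paired internally) is used. Beyond that, the essential analytic input is the uniformity in Lemma~\ref{lem:compare-expect}. That uniformity already requires $\varepsilon'^3n'\to\infty$ and the applicability of Lemma~\ref{lem:xi-unified} at the shifted parameters, both of which were established there. If $I_n$ is empty, both sides vanish and the statement is trivial.
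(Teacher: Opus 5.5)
Your proposal is correct and follows essentially the same route as the paper. The paper writes $N_n(N_n-1)=\sum_{k\in I_n}\sum_{|A|=k}\mathbf 1_A\,N_{\mathrm{ext}}(A)$, which is exactly your sum over ordered disjoint pairs $(A,A')$. It then uses Proposition~\ref{prop:regeneration} to identify the conditional expectation as $\sum_{j\in I_n}\mathbb E C_j^{(n-k,m-k+1)}$, and Lemma~\ref{lem:compare-expect} together with nonnegativity to pull out the uniform factor $1+o(1)$.
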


\begin{proof}
For $k\in I_n$ and $A\subseteq[n]$ with $|A|=k$, let $\mathbf 1_A:=\mathbf 1_{\{A\text{ is a tree component of }G_{n,m}^{\alpha,\ast}\}}.$ Let $N_{\mathrm{ext}}(A)$ denote the number of tree components of $G_{n,m}^{\alpha,\ast}$ whose sizes belong to
$I_n$ and which are disjoint from $A$. Since distinct tree components are disjoint,
\[
N_n(I_n)\bigl(N_n(I_n)-1\bigr)
=
\sum_{k\in I_n}\ \sum_{\substack{A\subseteq[n]\\ |A|=k}}
\mathbf 1_A\,N_{\mathrm{ext}}(A).
\]
Taking expectations gives
\[
\mathbb E\bigl[N_n(I_n)(N_n(I_n)-1)\bigr]
=
\sum_{k\in I_n}\ \sum_{\substack{A\subseteq[n]\\ |A|=k}}
\mathbb P(A\text{ is a tree component})\,
\mathbb E\bigl[N_{\mathrm{ext}}(A)\mid A\text{ is a tree component}\bigr].
\]

Now fix $k\in I_n$ and $A\subseteq[n]$ with $|A|=k$. By Proposition~\ref{prop:regeneration}, conditional on the event that $A$ is a tree component, the induced multigraph on \([n]\setminus A\), after relabeling
\([n]\setminus A\) as \([n-k]\), has the law of
\(G_{n-k,m-k+1}^{\alpha,*}\). Therefore
\[
\mathbb E\bigl[N_{\mathrm{ext}}(A)\mid A\text{ is a tree component}\bigr]
=
\sum_{j\in I_n}\mathbb E C_j^{(n-k,m-k+1)}.
\]
Note that $I_n\subseteq [2,B\varepsilon^{-2}\log x_n]$, Lemma~\ref{lem:compare-expect} yields, uniformly for $j,k\in I_n$,
\[
\mathbb E C_j^{(n-k,m-k+1)}=(1+o(1))\mathbb E C_j^{(n,m)}.
\]
Hence, uniformly in $k\in I_n$ and $A$ with $|A|=k$,
\begin{equation}
\label{eq:external-tree-count-expectation}
\mathbb{E}\bigl[N_{\mathrm{ext}}(A)\mid A\text{ is a tree component}\bigr]
=
(1+o(1))\sum_{j\in I_n}\mathbb{E} C_j^{(n,m)}
=
(1+o(1))\mathbb{E} N_n(I_n).
\end{equation}
Substituting \eqref{eq:external-tree-count-expectation} back yields
\[
\mathbb E\bigl[N_n(I_n)(N_n(I_n)-1)\bigr]
=
(1+o(1))
\sum_{k\in I_n}
\left(
\sum_{\substack{A\subseteq[n]\\ |A|=k}}
\mathbb P(A\text{ is a tree component})
\right)
\mathbb E N_n(I_n).
\]
The inner sum is $\mathbb E C_k^{(n,m)}$, so
\[
\mathbb E\bigl[N_n(I_n)(N_n(I_n)-1)\bigr]
=
(1+o(1))
\left(
\sum_{k\in I_n}\mathbb E C_k^{(n,m)}
\right)
\mathbb E N_n(I_n)
=
(1+o(1))\bigl(\mathbb E N_n(I_n)\bigr)^2.
\]
\end{proof}

\begin{theorem}\label{thm:lower-multi-unified}
Fix $\delta>0$. Then
\[
\mathbb P\bigl(L_1(G_{n,m}^{\alpha,\ast})<(1-\delta)c_n\bigr)\to0.
\]
\end{theorem}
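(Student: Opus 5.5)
We argue by the second moment method applied to tree components with sizes in a window just below $c_n$. Fix $\delta\in(0,1)$ and set $k_-=\lceil(1-\delta)c_n\rceil$, $k_+=\lfloor(1-\delta/2)c_n\rfloor$, and $I_n=[k_-,k_+]\cap\mathbb N$. Since $c_n=C_\alpha\varepsilon^{-2}\log x_n$ with $C_\alpha\le 4$, we have $I_n\subseteq[2,B\varepsilon^{-2}\log x_n]$ with $B=4$. Hence Lemmas~\ref{lem:xi-unified}, \ref{lem:ank-unified}, \ref{lem:compare-expect} and \ref{lem:second-moment-window} all apply uniformly on $I_n$. On the event $N_n(I_n)\ge1$ there is a component of size at least $k_-\ge(1-\delta)c_n$. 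By Chebyshev in the form
\[
\mathbb P(N_n(I_n)=0)\le \frac{\mathbb E[N_n(I_n)^2]-(\mathbb E N_n(I_n))^2}{(\mathbb E N_n(I_n))^2}
=\frac{\mathbb E[N_n(N_n-1)]-(\mathbb E N_n)^2}{(\mathbb E N_n)^2}+\frac1{\mathbb E N_n},
\]
it suffices to show $\mathbb E N_n(I_n)\to\infty$. Indeed, Lemma~\ref{lem:second-moment-window} makes the first term $o(1)$.

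The main step is the first moment. By Lemma~\ref{lem:ank-unified}, uniformly for $k\in I_n$,
\[
\mathbb E C_k^{(n,m)}\ge c\, n k^{-5/2}e^{-\Psi_n k},\qquad \Psi_n=\frac{\alpha+1}{2(\alpha+2)}\varepsilon^2+O(\varepsilon^3)=\frac{\varepsilon^2}{C_\alpha}(1+O(\varepsilon)).
\]
For $k\le k_+$ we get $\Psi_n k\le(1-\delta/2)(1+O(\varepsilon))\log x_n\le(1-\delta/4)\log x_n$ for large $n$, since $\varepsilon\log x_n$ is dominated once we absorb the $O(\varepsilon)$ relative error. Also $k^{-5/2}\ge c_n^{-5/2}\asymp \varepsilon^{5}(\log x_n)^{-5/2}$. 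The window has length $|I_n|\ge \tfrac{\delta}{2}c_n-1\asymp\varepsilon^{-2}\log x_n$. Summing therefore gives
\[
\mathbb E N_n(I_n)\gtrsim n\,\varepsilon^{-2}\log x_n\cdot \varepsilon^{5}(\log x_n)^{-5/2}\,x_n^{-(1-\delta/4)}
= x_n^{\delta/4}(\log x_n)^{-3/2}\to\infty,
\]
using $n\varepsilon^3=x_n\to\infty$. This is the only place where the exact constant $C_\alpha$ matters. The slack $\delta/4$ absorbs the $O(\varepsilon^3)$ error in $\Psi_n$, the polynomial factors, and the $\Theta$-constants, which depend only on $B$ and $\alpha_*$.

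Combining the two displays, $\mathbb P(N_n(I_n)=0)=o(1)+1/\mathbb E N_n(I_n)\to0$. Hence with high probability $G^{\alpha,*}_{n,m}$ has a tree component of size at least $(1-\delta)c_n$, which proves the claim. The main obstacle is really the decorrelation contained in Lemma~\ref{lem:second-moment-window}, which rests on Proposition~\ref{prop:regeneration}. Since that lemma is available, the remaining care is to confirm that the window lies in the uniform range (so $B$ is fixed independently of $n$) and that the $o(1)$ in the second moment is uniform over $I_n$.
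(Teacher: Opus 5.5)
Your proposal is correct and follows essentially the same route as the paper: a second moment argument for the number of tree components with sizes in a window just below $c_n$, with the first moment bounded below via Lemma~\ref{lem:ank-unified} and the decorrelation supplied by Lemma~\ref{lem:second-moment-window}. The differences are cosmetic. The paper uses the narrower window $\{K_n,\dots,K_n+\lfloor\Psi_n^{-1}\rfloor\}$ with $K_n=\lfloor(1-\delta/4)\log x_n/\Psi_n\rfloor$ and concludes by Paley--Zygmund, whereas your wider window $[(1-\delta)c_n,(1-\delta/2)c_n]$ together with Chebyshev gives $\mathbb E N_n\gtrsim x_n^{\delta/4}(\log x_n)^{-3/2}$, which works equally well.
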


\begin{proof}
If $\delta\ge1$, then $(1-\delta)c_n\le0$, and the claim is immediate. We may therefore assume that $0<\delta<1.$ Set $\eta=\delta/4.$ Let $K_n=\left\lfloor (1-\eta)\frac{\log x_n}{\Psi_n}\right\rfloor, W_n=\left\lfloor \Psi_n^{-1}\right\rfloor,$ and $$I_n=\{K_n,K_n+1,\dots,K_n+W_n\}.$$
Since
\[
\Psi_n=\frac{\alpha+1}{2(\alpha+2)}\varepsilon^2+O(\varepsilon^3)
=
C_\alpha^{-1}\varepsilon^2(1+o(1)),
\]
we have
\[
K_n=(1-\eta+o(1))\,C_\alpha\,\varepsilon^{-2}\log x_n.
\]

Since $1-\eta>1-\delta$, it follows that $K_n\ge (1-\delta)c_n$ for all sufficiently large $n$. Also, $C_\alpha\in[2,4]$, so there exists a fixed constant $B>4$ such that
\[
I_n\subseteq [2,B\varepsilon^{-2}\log x_n]\cap\mathbb N
\]
for all large $n$.
Define $N_n=N_n(I_n)=\sum_{k\in I_n} C_k^{(n,m)}.$ We first show that $\mathbb E N_n\to\infty$. By Lemma~\ref{lem:ank-unified}, uniformly for $k\in I_n$,
\[
\mathbb E C_k^{(n,m)}\ge c\, n k^{-5/2} e^{-\Psi_n k}
\]
for some constant $c>0$. Since $k\le K_n+W_n$, we have
\[
\Psi_n k\le \Psi_n(K_n+W_n)\le (1-\eta)\log x_n+1
\]
for all large $n$, and thus
\[
e^{-\Psi_n k}\ge e^{-1}x_n^{-(1-\eta)}.
\]
Moreover, recall that $\log x_n \to \infty$ and $W_n \le \Psi_n^{-1},$ for all large $n$,
\[
k\le K_n+W_n\le \frac{(1-\eta)\log x_n}{\Psi_n}+\frac1{\Psi_n}\le \frac{2\log x_n}{\Psi_n},
\]
Hence
\[
k^{-5/2}\ge 2^{-5/2}\Psi_n^{5/2}(\log x_n)^{-5/2}.
\]
Therefore, uniformly for $k\in I_n$,
\[
\mathbb E C_k^{(n,m)}
\ge c' n \Psi_n^{5/2}(\log x_n)^{-5/2}x_n^{-(1-\eta)}
\]
for some constant $c'>0$. Since $|I_n|=W_n+1\ge \frac{1}{2\Psi_n}$ for all large $n$, we obtain
\[
\mathbb E N_n
\ge c'' n \Psi_n^{3/2}(\log x_n)^{-5/2}x_n^{-(1-\eta)}
\]
for some constant $c''>0$.
Using $\Psi_n\asymp \varepsilon^2$, we obtain
\[
\mathbb E N_n
\ge c''' x_n^\eta (\log x_n)^{-5/2}\to\infty.
\]

Next, Lemma~\ref{lem:second-moment-window} yields
\[
\mathbb E\bigl[N_n(N_n-1)\bigr]=(1+o(1))(\mathbb E N_n)^2.
\]
Since $\mathbb E N_n\to\infty$, we also have
\[
\mathbb E N_n^2
=
\mathbb E\bigl[N_n(N_n-1)\bigr]+\mathbb E N_n
=
(1+o(1))(\mathbb E N_n)^2.
\]
By the Paley--Zygmund inequality,
\[
\mathbb P(N_n>0)\ge \frac{(\mathbb E N_n)^2}{\mathbb E N_n^2}=1-o(1).
\]
On the event $\{N_n>0\}$ there exists a tree component with size in $I_n$, so
\[
L_1(G_{n,m}^{\alpha,\ast})\ge \min I_n=K_n\ge (1-\delta)c_n
\]
for all large $n$. Therefore
\[
\mathbb P\bigl(L_1(G_{n,m}^{\alpha,\ast})<(1-\delta)c_n\bigr)\to0,
\]
as claimed.
\end{proof}

\begin{theorem}[Lower bound for the simple graph]\label{thm:lower-simple-unified}
Fix $\delta>0$, then
\[
\mathbb P\bigl(L_1(G_{n,m}^{\alpha})<(1-\delta)c_n\bigr)\to0.
\]
\end{theorem}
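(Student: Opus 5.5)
We transfer the multigraph lower bound of Theorem~\ref{thm:lower-multi-unified} to the simple graph $G^\alpha_{n,m}$ with Lemma~\ref{lem:jw-transfer}, which is the same tool used for the upper bound in Theorem~\ref{thm:upper-unified}. Lemma~\ref{lem:jw-transfer} bounds the probability of a family of \emph{simple} graphs under $G^{\alpha}_{n,m}$ by $B$ times its probability under $G^{\alpha,*}_{n,m}$, up to $o(1)$. So we take
\[
\mathcal G_n^-:=\{G:\ G\text{ simple on }[n],\ e(G)=m,\ L_1(G)<(1-\delta)c_n\}.
\]
Then $\{L_1(G^\alpha_{n,m})<(1-\delta)c_n\}=\{G^\alpha_{n,m}\in\mathcal G_n^-\}$. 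Since $m\le n/2$ and $\alpha\ge\alpha_*$ for large $n$, Lemma~\ref{lem:jw-transfer} with $C=1$, $\alpha_0=\alpha_*$ gives
\[
\mathbb P\bigl(G^\alpha_{n,m}\in\mathcal G_n^-\bigr)\le B_0\,\mathbb P\bigl(G^{\alpha,*}_{n,m}\in\mathcal G_n^-\bigr)+o(1).
\]

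It remains to bound the right-hand side. If $G^{\alpha,*}_{n,m}\in\mathcal G_n^-$, then in particular $L_1(G^{\alpha,*}_{n,m})<(1-\delta)c_n$; the family is just restricted further to the event that the multigraph is simple. Hence
\[
\mathbb P\bigl(G^{\alpha,*}_{n,m}\in\mathcal G_n^-\bigr)\le\mathbb P\bigl(L_1(G^{\alpha,*}_{n,m})<(1-\delta)c_n\bigr)\to0
\]
by Theorem~\ref{thm:lower-multi-unified}. Since $B_0$ is a fixed constant, the whole bound is $o(1)$, which proves the statement.

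The only point needing care is that the monotone direction of the events does not matter. Lemma~\ref{lem:jw-transfer} applies to \emph{any} family of simple graphs with $m$ edges, whether it is closed upward or downward. Its $o(1)$ term is uniform over families, which is the form stated. The case $\delta\ge1$ is trivial, exactly as in Theorem~\ref{thm:lower-multi-unified}. There is no real obstacle beyond checking that the hypotheses $m\le Cn$ and $\alpha\ge\alpha_0$ of Lemma~\ref{lem:jw-transfer} hold, and both were already verified in the proof of Theorem~\ref{thm:upper-unified}.
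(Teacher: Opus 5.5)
Your proposal is correct and matches the paper's proof essentially step for step. Both use the family $\mathcal G_n^-$ of simple graphs with $L_1<(1-\delta)c_n$, apply Lemma~\ref{lem:jw-transfer} with $C=1$, $\alpha_0=\alpha_*$, and bound $\mathbb P(G^{\alpha,*}_{n,m}\in\mathcal G_n^-)$ by $\mathbb P\bigl(L_1(G^{\alpha,*}_{n,m})<(1-\delta)c_n\bigr)\to0$ via Theorem~\ref{thm:lower-multi-unified}, with the case $\delta\ge1$ treated as trivial.
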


\begin{proof}
If $\delta\ge1$, then $(1-\delta)c_n\le0$, so the claim is immediate. Assume that $0<\delta<1$.
Let $\mathcal G_n^-$ be the family of simple graphs on $[n]$ with exactly $m$ edges and largest component
size less than $(1-\delta)c_n$, namely
\[
\mathcal G_n^-=\{G:\ G\text{ is simple graph on }[n],\ e(G)=m,\ L_1(G)<(1-\delta)c_n\}.
\]
Since
\[
m=\frac{\alpha}{2(\alpha+1)}(1-\varepsilon)n\le \frac n2
\]
and $\alpha\ge \alpha_*$ for all large $n$, Lemma~\ref{lem:jw-transfer} applies with $C=1$ and $\alpha_0=\alpha_*$.
Thus there exists a constant $B_0=B_0(1,\alpha_*)<\infty$ such that
\[
\mathbb P(G_{n,m}^{\alpha}\in\mathcal G_n^-)
\le
B_0\,\mathbb P(G_{n,m}^{\alpha,\ast}\in\mathcal G_n^-)+o(1).
\]
Since every graph in \(\mathcal G_n^-\) is simple,
\[
\mathbb P(G_{n,m}^{\alpha,\ast}\in\mathcal G_n^-)
\le
\mathbb P\bigl(L_1(G_{n,m}^{\alpha,\ast})<(1-\delta)c_n\bigr)\to0
\]
by Theorem~\ref{thm:lower-multi-unified}. Therefore
\[
\mathbb P\bigl(L_1(G_{n,m}^{\alpha})<(1-\delta)c_n\bigr)
=
\mathbb P(G_{n,m}^{\alpha}\in\mathcal G_n^-)
\to0.
\]

\end{proof}

Recall $c_n=C_\alpha\,\varepsilon^{-2}\log(\varepsilon^3 n),$ and fix $\delta>0$. Theorem~\ref{thm:upper-unified} gives
\[
\mathbb P\bigl(L_1(G_{n,m}^{\alpha,\ast})>(1+\delta)c_n\bigr)\to0,
\qquad
\mathbb P\bigl(L_1(G_{n,m}^{\alpha})>(1+\delta)c_n\bigr)\to0.
\]
Theorems~\ref{thm:lower-multi-unified} and~\ref{thm:lower-simple-unified} give
\[
\mathbb P\bigl(L_1(G_{n,m}^{\alpha,\ast})<(1-\delta)c_n\bigr)\to0,
\qquad
\mathbb P\bigl(L_1(G_{n,m}^{\alpha})<(1-\delta)c_n\bigr)\to0.
\]
Therefore
\[
\mathbb P\bigl((1-\delta)c_n\le L_1(G_{n,m}^{\alpha,\ast})\le (1+\delta)c_n\bigr)\to1,
\]
and
\[
\mathbb P\bigl((1-\delta)c_n\le L_1(G_{n,m}^{\alpha})\le (1+\delta)c_n\bigr)\to1.
\]
Since $\delta>0$ is arbitrary, these are equivalent to
\[
L_1(G_{n,m}^{\alpha,\ast})
=
(1+o_p(1))\,c_n,
\qquad
L_1(G_{n,m}^{\alpha})
=
(1+o_p(1))\,c_n.
\]
\begin{remark}
The assumption $\varepsilon=o(1)$ in Theorem~\ref{subcritical case theo 2}
is tied to the barely subcritical normalization.  If $\varepsilon\in(0,1)$ is fixed, the same moment computation gives
\[
        L_1(G^\alpha_{n,m})
        =(1+o_p(1))\,\frac{\log n}{\Psi_\alpha(\varepsilon)},
\]
 where
\[
        \Psi_\alpha(\varepsilon)
        =
        \log\frac{1}{1-\varepsilon}
        +(\alpha+2)\log\left(1-\frac{\varepsilon}{\alpha+2}\right).
\]

The reason is that, for fixed $\varepsilon$, the relevant exponential decay is governed by the full
Cram\'er exponent rather than by $\varepsilon^{-2}$ at the critical
point.  In the multigraph model, the limiting degree distribution is
$D\sim \operatorname{NBin}(\alpha,p)$ with $ p=\frac{1-\varepsilon}{\alpha+2-\varepsilon}.$ Then
$D^\star-1\sim \operatorname{NBin}(\alpha+1,p)$.  Therefore, for
$\eta=D^\star-2$,
\[
        \varphi_\alpha(\theta)
        =\mathbb E e^{\theta\eta}
        =
        e^{-\theta}
        \left(\frac{1-p}{1-pe^\theta}\right)^{\alpha+1}.
\]
The minimum of $\varphi_\alpha$ over positive $\theta$ is attained at
$\theta=\theta_\alpha$ satisfying
\begin{equation}
    pe^{\theta_\alpha}=\frac{1}{\alpha+2}.
    \label{eq:theta_alpha}
\end{equation}
Substituting \eqref{eq:theta_alpha} gives
\begin{equation}
    -\log \varphi_\alpha(\theta_\alpha)
    =
    \log\frac{1}{1-\varepsilon}
    +(\alpha+2)\log\left(1-\frac{\varepsilon}{\alpha+2}\right)
    =
    \Psi_\alpha(\varepsilon).
    \label{eq:log_varphi}
\end{equation}
Thus the fixed subcritical scale is $\Psi_\alpha(\varepsilon)^{-1}\log n$.
The constant appearing in Theorem~\ref{subcritical case theo 2} is 
the near critical expansion of \eqref{eq:log_varphi}, since
\[
        \Psi_\alpha(\varepsilon)
        =
        \frac{\alpha+1}{2(\alpha+2)}\varepsilon^2
        +O(\varepsilon^3),
        \qquad \varepsilon\downarrow0.
\]

The lower bound requires no new idea, because it does not depend on \(\varepsilon = o(1)\). For the upper bound,  
for a good deterministic degree sequence, the component exploration in the
configuration model admits the exponential bound
\[
        \mathbb P_d\big(|\mathcal C(v)|\ge k\big)
        \le
        \exp\{O(d_v)\}
        \exp\{-(\Psi_\alpha(\varepsilon)-o(1))k\}.
\]
The required exponential moment of the degree sequence holds because
$pe^{\theta_\alpha}=1/(\alpha+2)<1$.  Summing over all vertices and
choosing $k=(1+\delta)\Psi_\alpha(\varepsilon)^{-1}\log n$ gives the upper bound for the multigraph model.  The transfer from
$G^{\alpha,*}_{n,m}$ to the simple graph process $G^\alpha_{n,m}$ then follows
as in the proof of Theorem~\ref{subcritical case theo 2}.

\end{remark}

The result about $\alpha \to\infty $ follows directly.

\begin{corollary}\label{cor:subcritical-jw}
Suppose that $\alpha\to\infty,\alpha\varepsilon\to\infty,
\varepsilon^3 n\to\infty.$ Let $m=\left\lfloor \frac n2(1-\varepsilon)\right\rfloor.$ Then
\[
L_1(G_{n,m}^{\alpha})=(2+o_p(1))\varepsilon^{-2}\log(\varepsilon^3 n).
\]
\end{corollary}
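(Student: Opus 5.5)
The plan is to reduce the corollary to Theorem~\ref{thm:main-fixed-alpha} by rewriting $m$ in the normalization $m=\lfloor m_c(1-\varepsilon')\rfloor$. Then we check that replacing $\varepsilon'$ by $\varepsilon$ and $C_\alpha$ by $2$ costs only a factor $1+o(1)$.

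Since $m_c=\frac{\alpha n}{2(\alpha+1)}=\frac n2\bigl(1-\frac1{\alpha+1}\bigr)$, we define $\varepsilon'$ by $m=m_c(1-\varepsilon')$. Solving gives $1-\varepsilon'=\frac{(\alpha+1)}{\alpha}\cdot\frac{2m}{n}=(1+\alpha^{-1})(1-\varepsilon)+O(1/n)$. Hence $\varepsilon'=\varepsilon-\alpha^{-1}+\varepsilon\alpha^{-1}+O(1/n)$.

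The key observation is that $\alpha\varepsilon\to\infty$ gives $\alpha^{-1}=o(\varepsilon)$, and $\varepsilon^3n\to\infty$ gives $1/n=o(\varepsilon)$. Therefore $\varepsilon'=\varepsilon(1+o(1))$. In particular $\varepsilon'>0$, $\varepsilon'\to0$, and $(\varepsilon')^3n=\varepsilon^3 n(1+o(1))\to\infty$.

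There is one technicality: the floor in the definition of $m$. The statement of Theorem~\ref{thm:main-fixed-alpha} uses $m=\lfloor m_c(1-\varepsilon')\rfloor$, but we may simply choose $\varepsilon'$ so that $m_c(1-\varepsilon')=m$ exactly, since $m$ is an integer. The proof of that theorem only uses $\varepsilon'$ through $\mu_n$, $p_n$ and the growth conditions, all of which remain valid. If $\varepsilon'$ need not be monotone, we note that the proofs never used $\varepsilon\downarrow0$ beyond $\varepsilon\to0$.

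Now apply Theorem~\ref{thm:main-fixed-alpha} with $a=\infty$. It gives $L_1(G^\alpha_{n,m})=(1+o_p(1))C_\alpha(\varepsilon')^{-2}\log((\varepsilon')^3n)$. Three substitutions then finish the proof:
\begin{itemize}
\item $C_\alpha=\frac{2(\alpha+2)}{\alpha+1}=2(1+O(\alpha^{-1}))\to2$;
\item $(\varepsilon')^{-2}=\varepsilon^{-2}(1+o(1))$;
\item $\log((\varepsilon')^3n)=\log(\varepsilon^3n)+\log(1+o(1))=\log(\varepsilon^3 n)(1+o(1))$, since $\log(\varepsilon^3n)\to\infty$.
\end{itemize}
Combining these yields $L_1=(2+o_p(1))\varepsilon^{-2}\log(\varepsilon^3n)$.

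The only step needing care is the comparison $\varepsilon'\sim\varepsilon$. It uses $\alpha\varepsilon\to\infty$ essentially: without this condition, $m$ could lie at or above $m_c$.
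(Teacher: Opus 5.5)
Your proposal is correct and follows essentially the same route as the paper. You reparametrize $m=m_c(1-\varepsilon')$ exactly, which also disposes of the floor, and show $\varepsilon'=\varepsilon-(1-\varepsilon)/\alpha+O(1/n)=\varepsilon(1+o(1))$ using $\alpha\varepsilon\to\infty$ and $\varepsilon^3n\to\infty$. You then apply Theorem~\ref{thm:main-fixed-alpha} with $a=\infty$ and replace $C_\alpha$, $(\varepsilon')^{-2}$ and $\log((\varepsilon')^3n)$ by their asymptotic equivalents, exactly as the paper does. Your remark that the proofs only use $\varepsilon\to0$, not monotonicity, is a harmless refinement that the paper leaves implicit.
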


\begin{proof}
Write $m=\frac n2(1-\varepsilon)-\zeta_n, 0\le \zeta_n<1,$ and define $\tilde\varepsilon=1-\frac{2(\alpha+1)m}{\alpha n}.$ Then $m=\frac{\alpha n}{2(\alpha+1)}(1-\tilde\varepsilon),$ so Theorem \ref{thm:main-fixed-alpha} applies with $\tilde\varepsilon$ in place of $\varepsilon$. Therefore,
\[
\tilde\varepsilon
=1-\frac{\alpha+1}{\alpha}(1-\varepsilon)+\frac{2(\alpha+1)\zeta_n}{\alpha n}
=\varepsilon-\frac{1-\varepsilon}{\alpha}+O(n^{-1}).
\]
Since $\alpha\varepsilon\to\infty$, we have $\alpha^{-1}=o(\varepsilon)$. Since $\varepsilon^3 n\to\infty$, we also have $n^{-1}=o(\varepsilon)$. Hence $\tilde\varepsilon=\varepsilon(1+o(1)).$ In particular, $\tilde\varepsilon\to0$, $\tilde\varepsilon>0$ for all large $n$, and
\[
\tilde\varepsilon^3 n=(1+o(1))\varepsilon^3 n\to\infty.
\]

Therefore Theorem \ref{thm:main-fixed-alpha} yields
\begin{equation}
\label{eq:L1-tilde}
L_1(G_{n,m}^{\alpha})
=(1+o_p(1))\frac{2(\alpha+2)}{\alpha+1}
\tilde\varepsilon^{-2}\log(\tilde\varepsilon^3 n).
\end{equation}
Now $\alpha\to\infty$ implies
\begin{equation}
\label{eq:alpha-factor-and-eps}
\frac{2(\alpha+2)}{\alpha+1}=2+o(1),
\qquad
\tilde\varepsilon^{-2}=(1+o(1))\varepsilon^{-2}.
\end{equation}

Also, since $\tilde\varepsilon/\varepsilon\to1$ and $\varepsilon^3 n\to\infty$,
\begin{equation}
\label{eq:log-tilde-eps}
\begin{aligned}
\log(\tilde\varepsilon^3 n)
&=\log(\varepsilon^3 n)
  +3\log\!\left(\frac{\tilde\varepsilon}{\varepsilon}\right) \\
&=\log(\varepsilon^3 n)+o(1) \\
&=(1+o(1))\log(\varepsilon^3 n).
\end{aligned}
\end{equation}
Combining \eqref{eq:L1-tilde}-\eqref{eq:log-tilde-eps}, we obtain
\[
L_1(G_{n,m}^{\alpha})=(2+o_p(1))\varepsilon^{-2}\log(\varepsilon^3 n).
\]

\end{proof}

\section*{Further Work}
%
%
%

We conclude with several directions for further research.
The upper bound argument developed in this paper combines the construction of
a good degree sequence with an upper bound for the subcritical configuration
model, while the lower bound argument is based on a second moment analysis of
tree components.

A similar strategy may also provide a route to
Problem~5.4 of Janson and Warnke~\cite{JansonWarnke2021Preferential}.  For the upper bound, one needs to show that the
random degree sequence of the graph remaining after the removal of the giant
component belongs, with high probability, to a deterministic class of
good degree sequences.

In addition, the first and second moment estimates for tree components
used in our lower bound argument appear to extend to the supercritical
regime, suggesting that, in the multigraph model, the second largest component should satisfy the sharp asymptotic 
\[
L_2 \sim C_\alpha \varepsilon^{-2}\log(\varepsilon^3 n).
\]

 It would also be interesting to
investigate whether analogous ideas can be extended to vertex heterogeneous
affine weights $(d_u+\alpha_u)(d_v+\alpha_v)$ and to hypergraph variants.

\begin{acks}[Acknowledgments]
The author would like to thank Shuyang Gong and Lutz Warnke for their valuable suggestions.
\end{acks}

\begin{funding}
 The author is supported by the National Natural Science Foundation of China
(12595294, 12231002), and the New Cornerstone Science Foundation(NCI202501).
\end{funding}



\begin{thebibliography}{99}

\bibitem{Aldous1997Brownian}
\begin{barticle}[author]
\bauthor{\bsnm{Aldous},~\binits{D.}}
(\byear{1997}).
\btitle{Brownian excursions, critical random graphs and the multiplicative coalescent}.
\bjournal{Ann. Probab.} \bvolume{25} \bpages{812--854}.
\bid{doi={10.1214/aop/1024404421}}
\end{barticle}
\endbibitem

\bibitem{Barabasi2016NetworkScience}
\begin{bbook}[author]
\bauthor{\bsnm{Barab\'asi},~\binits{A.-L.}}
(\byear{2016}).
\btitle{Network Science}.
\bpublisher{Cambridge University Press}, \blocation{Cambridge}.
\end{bbook}
\endbibitem

\bibitem{BarabasiAlbert1999Emergence}
\begin{barticle}[author]
\bauthor{\bsnm{Barab\'asi},~\binits{A.-L.}} \AND
\bauthor{\bsnm{Albert},~\binits{R.}}
(\byear{1999}).
\btitle{Emergence of scaling in random networks}.
\bjournal{Science} \bvolume{286} \bpages{509--512}.
\bid{doi={10.1126/science.286.5439.509}}
\end{barticle}
\endbibitem

\bibitem{BenNaimKrapivsky2012Popularity}
\begin{barticle}[author]
\bauthor{\bsnm{Ben-Naim},~\binits{E.}} \AND
\bauthor{\bsnm{Krapivsky},~\binits{P. L.}}
(\byear{2012}).
\btitle{Popularity-driven networking}.
\bjournal{EPL} \bvolume{97} \bpages{48003}.
\bid{doi={10.1209/0295-5075/97/48003}}
\end{barticle}
\endbibitem

\bibitem{Bollobas2001RandomGraphs}
\begin{bbook}[author]
\bauthor{\bsnm{Bollob\'as},~\binits{B.}}
(\byear{2001}).
\btitle{Random Graphs}, 2nd ed.
\bpublisher{Cambridge University Press}, \blocation{Cambridge}.
\bid{doi={10.1017/CBO9780511814068}}
\end{bbook}
\endbibitem

\bibitem{BollobasRiordan2013PhaseTransition}
\begin{bincollection}[author]
\bauthor{\bsnm{Bollob\'as},~\binits{B.}} \AND
\bauthor{\bsnm{Riordan},~\binits{O.}}
(\byear{2013}).
\btitle{The phase transition in the Erd\H{o}s--R\'enyi random graph process}.
In \bbooktitle{Erd\H{o}s Centennial} \bpages{59--110}.
\bpublisher{J\'anos Bolyai Mathematical Society}, \blocation{Budapest}.
\bid{doi={10.1007/978-3-642-39286-3_3}}
\end{bincollection}
\endbibitem

\bibitem{BollobasRiordanSpencerTusnady2001DegreeSequence}
\begin{barticle}[author]
\bauthor{\bsnm{Bollob\'as},~\binits{B.}},
\bauthor{\bsnm{Riordan},~\binits{O.}},
\bauthor{\bsnm{Spencer},~\binits{J.}} \AND
\bauthor{\bsnm{Tusn\'ady},~\binits{G.}}
(\byear{2001}).
\btitle{The degree sequence of a scale-free random graph process}.
\bjournal{Random Structures Algorithms} \bvolume{18} \bpages{279--290}.
\bid{doi={10.1002/rsa.1009}}
\end{barticle}
\endbibitem

\bibitem{BorgsChayesLovaszSosVesztergombi2011Limits}
\begin{barticle}[author]
\bauthor{\bsnm{Borgs},~\binits{C.}},
\bauthor{\bsnm{Chayes},~\binits{J.}},
\bauthor{\bsnm{Lov\'asz},~\binits{L.}},
\bauthor{\bsnm{S\'os},~\binits{V. T.}} \AND
\bauthor{\bsnm{Vesztergombi},~\binits{K.}}
(\byear{2011}).
\btitle{Limits of randomly grown graph sequences}.
\bjournal{European J. Combin.} \bvolume{32} \bpages{985--999}.
\bid{doi={10.1016/j.ejc.2011.03.015}}
\end{barticle}
\endbibitem

\bibitem{CoulsonPerarnau2023Subcritical}
\begin{barticle}[author]
\bauthor{\bsnm{Coulson},~\binits{M.}} \AND
\bauthor{\bsnm{Perarnau},~\binits{G.}}
(\byear{2023}).
\btitle{Largest component of subcritical random graphs with given degree sequence}.
\bjournal{Electron. J. Probab.} \bvolume{28} \bpages{Paper No. 34, 28 pp}.
\bid{doi={10.1214/23-EJP921}}
\end{barticle}
\endbibitem

\bibitem{DharaHofstadLeeuwaardenSen2017CriticalWindow}
\begin{barticle}[author]
\bauthor{\bsnm{Dhara},~\binits{S.}},
\bauthor{\bparticle{van der}~\bsnm{Hofstad},~\binits{R.}},
\bauthor{\bparticle{van}~\bsnm{Leeuwaarden},~\binits{J. S. H.}} \AND
\bauthor{\bsnm{Sen},~\binits{S.}}
(\byear{2017}).
\btitle{Critical window for the configuration model: finite third moment degrees}.
\bjournal{Electron. J. Probab.} \bvolume{22} \bpages{Paper No. 16, 33 pp}.
\bid{doi={10.1214/17-EJP29}}
\end{barticle}
\endbibitem

\bibitem{DingKimLubetzkyPeres2011YoungGiant}
\begin{barticle}[author]
\bauthor{\bsnm{Ding},~\binits{J.}},
\bauthor{\bsnm{Kim},~\binits{J. H.}},
\bauthor{\bsnm{Lubetzky},~\binits{E.}} \AND
\bauthor{\bsnm{Peres},~\binits{Y.}}
(\byear{2011}).
\btitle{Anatomy of a young giant component in the random graph}.
\bjournal{Random Structures Algorithms} \bvolume{39} \bpages{139--178}.
\bid{doi={10.1002/rsa.20342}}
\end{barticle}
\endbibitem

\bibitem{DingLubetzkyPeres2014StrictlySupercritical}
\begin{barticle}[author]
\bauthor{\bsnm{Ding},~\binits{J.}},
\bauthor{\bsnm{Lubetzky},~\binits{E.}} \AND
\bauthor{\bsnm{Peres},~\binits{Y.}}
(\byear{2014}).
\btitle{Anatomy of the giant component: The strictly supercritical regime}.
\bjournal{European J. Combin.} \bvolume{35} \bpages{155--168}.
\bid{doi={10.1016/j.ejc.2013.06.004}}
\end{barticle}
\endbibitem

\bibitem{ErdosRenyi1959Random}
\begin{barticle}[author]
\bauthor{\bsnm{Erd\H{o}s},~\binits{P.}} \AND
\bauthor{\bsnm{R\'enyi},~\binits{A.}}
(\byear{1959}).
\btitle{On random graphs. I}.
\bjournal{Publ. Math. Debrecen} \bvolume{6} \bpages{290--297}.
\end{barticle}
\endbibitem

\bibitem{ErdosRenyi1960Evolution}
\begin{barticle}[author]
\bauthor{\bsnm{Erd\H{o}s},~\binits{P.}} \AND
\bauthor{\bsnm{R\'enyi},~\binits{A.}}
(\byear{1960}).
\btitle{On the evolution of random graphs}.
\bjournal{Publ. Math. Inst. Hungar. Acad. Sci.} \bvolume{5} \bpages{17--61}.
\end{barticle}
\endbibitem

\bibitem{HatamiMolloy2012ScalingWindow}
\begin{barticle}[author]
\bauthor{\bsnm{Hatami},~\binits{H.}} \AND
\bauthor{\bsnm{Molloy},~\binits{M.}}
(\byear{2012}).
\btitle{The scaling window for a random graph with a given degree sequence}.
\bjournal{Random Structures Algorithms} \bvolume{41} \bpages{99--123}.
\bid{doi={10.1002/RSA.20394}}
\end{barticle}
\endbibitem

\bibitem{HruzPeter2010Nongrowing}
\begin{barticle}[author]
\bauthor{\bsnm{Hru\v{z}},~\binits{T.}} \AND
\bauthor{\bsnm{Peter},~\binits{U.}}
(\byear{2010}).
\btitle{Nongrowing preferential attachment random graphs}.
\bjournal{Internet Math.} \bvolume{6} \bpages{461--487}.
\bid{doi={10.1080/15427951.2010.553143}}
\end{barticle}
\endbibitem

\bibitem{Janson2018EdgeExchangeable}
\begin{barticle}[author]
\bauthor{\bsnm{Janson},~\binits{S.}}
(\byear{2018}).
\btitle{On edge exchangeable random graphs}.
\bjournal{J. Stat. Phys.} \bvolume{173} \bpages{448--484}.
\bid{doi={10.1007/s10955-017-1832-9}}
\end{barticle}
\endbibitem

\bibitem{JansonLuczak2009NewApproach}
\begin{barticle}[author]
\bauthor{\bsnm{Janson},~\binits{S.}} \AND
\bauthor{\bsnm{{\L}uczak},~\binits{M. J.}}
(\byear{2009}).
\btitle{A new approach to the giant component problem}.
\bjournal{Random Structures Algorithms} \bvolume{34} \bpages{197--216}.
\bid{doi={10.1002/rsa.20231}}
\end{barticle}
\endbibitem

\bibitem{JansonLuczakRucinski2000RandomGraphs}
\begin{bbook}[author]
\bauthor{\bsnm{Janson},~\binits{S.}},
\bauthor{\bsnm{{\L}uczak},~\binits{T.}} \AND
\bauthor{\bsnm{Ruci\'nski},~\binits{A.}}
(\byear{2000}).
\btitle{Random Graphs}.
\bpublisher{Wiley}, \blocation{New York}.
\bid{doi={10.1002/9781118032718}}
\end{bbook}
\endbibitem

\bibitem{JansonWarnke2021Preferential}
\begin{barticle}[author]
\bauthor{\bsnm{Janson},~\binits{S.}} \AND
\bauthor{\bsnm{Warnke},~\binits{L.}}
(\byear{2021}).
\btitle{Preferential attachment without vertex growth: emergence of the giant component}.
\bjournal{Ann. Appl. Probab.} \bvolume{31} \bpages{1523--1547}.
\bid{doi={10.1214/20-AAP1610}}
\end{barticle}
\endbibitem

\bibitem{Joseph2014CriticalDegrees}
\begin{barticle}[author]
\bauthor{\bsnm{Joseph},~\binits{A.}}
(\byear{2014}).
\btitle{The component sizes of a critical random graph with given degree sequence}.
\bjournal{Ann. Appl. Probab.} \bvolume{24} \bpages{2560--2594}.
\bid{doi={10.1214/13-AAP985}}
\end{barticle}
\endbibitem

\bibitem{KangSeierstad2008CriticalPhase}
\begin{barticle}[author]
\bauthor{\bsnm{Kang},~\binits{M.}} \AND
\bauthor{\bsnm{Seierstad},~\binits{T. G.}}
(\byear{2008}).
\btitle{The critical phase for random graphs with a given degree sequence}.
\bjournal{Combin. Probab. Comput.} \bvolume{17} \bpages{67--86}.
\bid{doi={10.1017/S096354830700867X}}
\end{barticle}
\endbibitem

\bibitem{MolloyReed1995CriticalPoint}
\begin{barticle}[author]
\bauthor{\bsnm{Molloy},~\binits{M.}} \AND
\bauthor{\bsnm{Reed},~\binits{B.}}
(\byear{1995}).
\btitle{A critical point for random graphs with a given degree sequence}.
\bjournal{Random Structures Algorithms} \bvolume{6} \bpages{161--180}.
\bid{doi={10.1002/rsa.3240060204}}
\end{barticle}
\endbibitem

\bibitem{MolloyReed1998SizeGiant}
\begin{barticle}[author]
\bauthor{\bsnm{Molloy},~\binits{M.}} \AND
\bauthor{\bsnm{Reed},~\binits{B.}}
(\byear{1998}).
\btitle{The size of the giant component of a random graph with a given degree sequence}.
\bjournal{Combin. Probab. Comput.} \bvolume{7} \bpages{295--305}.
\bid{doi={10.1017/S0963548398003526}}
\end{barticle}
\endbibitem

\bibitem{Pittel2010Degrees}
\begin{barticle}[author]
\bauthor{\bsnm{Pittel},~\binits{B.}}
(\byear{2010}).
\btitle{On a random graph evolving by degrees}.
\bjournal{Adv. Math.} \bvolume{223} \bpages{619--671}.
\bid{doi={10.1016/j.aim.2009.08.015}}
\end{barticle}
\endbibitem

\bibitem{Price1976Cumulative}
\begin{barticle}[author]
\bauthor{\bparticle{de Solla}~\bsnm{Price},~\binits{D. J.}}
(\byear{1976}).
\btitle{A general theory of bibliometric and other cumulative advantage processes}.
\bjournal{J. Amer. Soc. Inform. Sci.} \bvolume{27} \bpages{292--306}.
\bid{doi={10.1002/ASI.4630270505}}
\end{barticle}
\endbibitem

\bibitem{Rath2012EdgeConservative}
\begin{barticle}[author]
\bauthor{\bsnm{R\'ath},~\binits{B.}}
(\byear{2012}).
\btitle{Time evolution of dense multigraph limits under edge-conservative preferential attachment dynamics}.
\bjournal{Random Structures Algorithms} \bvolume{41} \bpages{365--390}.
\bid{doi={10.1002/rsa.20422}}
\end{barticle}
\endbibitem

\bibitem{RathSzakacs2012Multigraph}
\begin{barticle}[author]
\bauthor{\bsnm{R\'ath},~\binits{B.}} \AND
\bauthor{\bsnm{Szak\'acs},~\binits{L.}}
(\byear{2012}).
\btitle{Multigraph limit of the dense configuration model and the preferential attachment graph}.
\bjournal{Acta Math. Hungar.} \bvolume{136} \bpages{196--221}.
\bid{doi={10.1007/s10474-012-0217-4}}
\end{barticle}
\endbibitem

\bibitem{vanDerHofstad2017RGCN1}
\begin{bbook}[author]
\bauthor{\bparticle{van der}~\bsnm{Hofstad},~\binits{R.}}
(\byear{2017}).
\btitle{Random Graphs and Complex Networks. Volume 1}.
\bpublisher{Cambridge University Press}, \blocation{Cambridge}.
\bid{doi={10.1017/9781316779422}}
\end{bbook}
\endbibitem

\bibitem{vanDerHofstad2024RGCN2}
\begin{bbook}[author]
\bauthor{\bparticle{van der}~\bsnm{Hofstad},~\binits{R.}}
(\byear{2024}).
\btitle{Random Graphs and Complex Networks. Volume 2}.
\bpublisher{Cambridge University Press}, \blocation{Cambridge}.
\bid{doi={10.1017/9781316795552}}
\end{bbook}
\endbibitem

\bibitem{vanDerHofstadJansonLuczak2019BarelySupercritical}
\begin{barticle}[author]
\bauthor{\bparticle{van der}~\bsnm{Hofstad},~\binits{R.}},
\bauthor{\bsnm{Janson},~\binits{S.}} \AND
\bauthor{\bsnm{{\L}uczak},~\binits{M. J.}}
(\byear{2019}).
\btitle{Component structure of the configuration model: barely supercritical case}.
\bjournal{Random Structures Algorithms} \bvolume{55} \bpages{3--55}.
\bid{doi={10.1002/rsa.20837}}
\end{barticle}
\endbibitem

\end{thebibliography}
\end{document}